\def\BibTeX{{\rm B\kern-.05em{\sc i\kern-.025em b}\kern-.08em
		T\kern-.1667em\lower.7ex\hbox{E}\kern-.125emX}}
\newtheorem{theorem}{Theorem}
\newtheorem{remark}{Remark}
\newtheorem{lemma}{Lemma}
\newtheorem{assumption}{Assumption}
\newtheorem{definition}{Definition}
\newtheorem{proposition}{Proposition}
\begin{document}

\title{A Novel Privacy Enhancement Scheme with Dynamic Quantization for Federated Learning\\
}

\author{Yifan Wang, \IEEEmembership{Student Member, IEEE}, Xianghui Cao, \IEEEmembership{Senior Member, IEEE},\\ Shi Jin, \IEEEmembership{Fellow, IEEE}, and Mo-Yuen Chow, \IEEEmembership{Fellow, IEEE}
\thanks{Y. Wang and X. Cao are with the School of Automation,
Southeast University, Nanjing 210096, China (e-mail: \{evan,xhcao\}@seu.edu.cn).}
\thanks{S. Jin is with the National Mobile Communications Research Laboratory,
 Southeast University, Nanjing 210096, China (e-mail: jinshi@seu.edu.cn).}
\thanks{M.-Y. Chow is with the UM–SJTU Joint Institute Shanghai Jiao Tong
University, Shanghai 200240, China (e-mail: moyuen.chow@sjtu.edu.cn).}
\thanks{This work has been submitted to the IEEE for possible publication. Copyright may be transferred without notice, after which this version may no longer be accessible.}
}

\maketitle

\begin{abstract}
Federated learning (FL) has been widely regarded as a promising paradigm for privacy preservation of raw data in machine learning. Although data privacy in FL is locally protected to some extent, it is still a desideratum to enhance privacy and alleviate communication overhead caused by repetitively transmitting model parameters. Typically, these challenges are addressed separately, or jointly via a unified scheme that consists of noise-injected privacy mechanism and communication compression, which may lead to model corruption due to the introduced composite noise. In this work, we propose a novel model-splitting privacy-preserving FL (MSP-FL) scheme to achieve private FL with precise accuracy guarantee. Based upon MSP-FL, we further propose a model-splitting privacy-preserving FL with dynamic quantization (MSPDQ-FL) to mitigate the communication overhead, which incorporates a shrinking quantization interval to reduce the quantization error. We provide privacy and convergence analysis for both MSP-FL and MSPDQ-FL under non-i.i.d. dataset, partial clients participation and finite quantization level. Numerical results are presented to validate the superiority of the proposed schemes.
\end{abstract}

\begin{IEEEkeywords}
Federated learning, privacy preservation, model splitting, dynamic quantization, accuracy guarantee.
\end{IEEEkeywords}


\section{Introduction}\label{sec:introduction}
\IEEEPARstart{E}{mpowered} by immense amount of data, deep learning has become a breakthrough for a broad range of applications, adequately unleashing the power of data \cite{fl_overview1}. Deep learning is usually implemented in a centralized manner, where a powerful server has access to the entire data, risking leakage of privacy-sensitive data. Meanwhile, the communication overhead substantially increases, owing to the collection of massive raw data at the server.
In response to these challenges, federated learning (FL) has emerged as a viable paradigm to prevent data leakage and reduce communication overhead \cite{advances_in_FL}. FL enables the independent training of distributed client models on local dataset, orchestrated by the central server \cite{fl_overview2}.

Typically, FL is designed by a decentralized parameter-server framework with a powerful server mastering the aggregation process, yet it is still desired to enhance the privacy of clients. Specifically, although data are not explicitly shared among clients, data privacy is still at risk of being divulged as adversaries are potentially able to reconstruct raw data of a target client based on the exchanged model parameters \cite{leakage}. There are many methods deal with extracting sensitive information from trained models, e.g., model-inversion attack, membership-inference attack and model-rebuild attack \cite{idlg,geiping2020inverting,reconstruct}.

To cope with the privacy concern, some approaches in \cite{zhaojiaqi_pvd-fl_tifs_2022,Thapa_splitfed_aaai_2022} enhanced the privacy in FL by homomorphic encryption and splitting learning, however, resulted in severe computational burden and poor learning efficiency, respectively.
An intuitive consideration involves integrating artificial noise into FL, namely differential private (DP)-based FL, known for its verifiable privacy guarantee \cite{The_algorithmic_foundations_of_differential_privacy_book}. In accordance with DP technique, local differential privacy (LDP) framework is widely adopted, wherein each local model parameter is perturbed by Laplacian or Gaussian noise before uploading \cite{truex_2020_ldp,jinshi}. Especially, the authors in \cite{kangwei_fl+dp_tifs_2020} were the first to theoretically analyze the convergence property of DP-based FL.
To mitigate the adverse impact of DP noise, the authors in \cite{yuanxin_Amplitude-Varying_Perturbation_for_Balancing_Privacy_and_Utility_tifs_2023} proposed a novel DP perturbation mechanism employing amplitude-varying noise.
Alternatively, in the context of average consensus, temporally or spatially correlated noise-based methods were devised for privacy preservation while maintaining accuracy \cite{Privacy-Preserving-Average-Consensus-moyilin-tac,wyq,yf}. For instance,
a state decomposition method in \cite{wyq} was proposed by decomposing each local state into two substates. However, such an approach is inapplicable to preserve privacy in decentralized FL by directly decomposing the model parameter according to certain public decomposition rule.

Regarding communication overhead, it becomes more urgent in privacy-preserving FL owing to the sacrifice of communication resource for privacy enhancement \cite{kangwei_fl+dp_tifs_2020}. In order to compress the communication, prior works usually turn to reducing the volume of transmitted model updates via sparsification or quantization \cite{uveq,pmlr-162-wang-Communication-Efficient_Adaptive_Federated_Learning,cuisg_arxiv_Performance_optimization_for_variable_bitwidth_federated}, aggregating with partially received information \cite{sayed_TSP_Federated_Learning_Under_Importance_Sampling_2022} and event-triggered communication \cite{Asymptotic-Analysis-of-Federated-Learning-Under-Event-Triggered-Communication-TSP-2023-KTH}. In this paper, we mainly focus on tools from quantization theory to tackle communication overhead under finite communication bandwidth. It is worth noting that some recent works proposed attractive designs for quantization to further improve the communication efficiency \cite{LAQ_2022_Globecom,aq3}.

The aforementioned studies separately considered privacy concern and communication overhead in FL. Yet the privacy enhancement brought by LDP actually sacrifices the convergence accuracy of the model parameter, and in turn requires more communication cost for the interactions between the server and clients. Thereby, it naturally emerges a necessity to consider both challenges of privacy enhancement and communication compression for FL. Recently, a handful of studies have jointly tackled privacy enhancement and communication efficiency in FL. The authors in \cite{Shuffled-Model-of-Differential-Privacy-in-Federated-Learning_2021_pmlr_Girgis} were the first to provide trade-off analysis on privacy, convergence and communication for convex loss functions. Later, in \cite{NIPS2022_SoteriaFL:A-Unified-Framework-for-Private-Federated}, a unified FL framework was proposed by exploiting general communication compression scheme and LDP mechanism for nonconvex optimization setting, together with concrete analysis on trade-offs among privacy, utility and communication.
To avoid injecting excessive noise, noise from subtractive dithered quantization (SDQ) was utilized together with a low-power and dedicated privacy-preserving noise to build up LDP mechanism, hence, obtaining a less distorted model via such a joint design \cite{joint}.
Nonetheless, these studies inevitably introduced persistent noise either from the LDP mechanism or lossy compression, compromising the model accuracy more or less. To the best of our knowledge, it still remains an open problem how to realize both privacy guarantee and communication efficiency in FL with exact model accuracy and provable convergence.

Driven by the aforementioned considerations, this paper emphasizes the necessity to simultaneously design privacy enhancement and compressed communication in FL. Concretely, instead of using LDP mechanism to enhance privacy, we firstly develop the state decomposition approach in FL coined model-splitting privacy-preserving FL (MSP-FL). Under MSP-FL, each local model parameter is split into two types of submodel parameters, wherein one of the submodel parameters is appointed as the visible submodel and the others are invisible. At each selected client, only the visible submodel parameter is uploaded to the channel. Note that, to avoid the observability of the remaining invisible submodel parameters, the number of invisible local submodel parameters is kept privately at each client. To proceed, on the basis of MSP-FL, we propose a model-splitting privacy-preserving with dynamic quantization FL (MSPDQ-FL), which is equipped with dynamic quantization intervals in channels to further reduce the volume of transmitted submodels. Given that MSP-FL ensures precise convergence accuracy if the number of communication rounds after each learning round is sufficiently large, hence, the privacy and quantization mechanisms can be independently designed in MSPDQ-FL. This is exactly one of the major differences between our work and \cite{NIPS2022_SoteriaFL:A-Unified-Framework-for-Private-Federated,joint}. The main contributions of this paper are summarized as follows:
\begin{itemize}
  \item We propose an MSP-FL for privacy enhancement in FL. In contrast to traditional LDP-based FL, MSP-FL can achieve exact model accuracy in expectation via correlated noise and specific interaction protocol designs for submodels. Meanwhile, a new privacy notion is used to measure the privacy degree due to the incompletely randomized characteristic of model splitting mechanism.
  \item To further reduce the communication overhead, we present a novel scheme coined MSPDQ-FL. By assigning dynamic quantization intervals, MSPDQ-FL asymptotically reduces the quantization errors during the transmissions.
  \item We rigorously analyze the convergence performance of both MSP-FL and MSPDQ-FL. The results theoretically prove that the model parameter errors stemming from privacy enhancement and quantization can be eliminated, as long as the number of communication rounds $K_t$ following each learning round $t$ exceed a lower bound.
  \item From the perspective of differential privacy, we explore the sufficient condition of model splitting rule for differential privacy noise. In addition, we prove that the proposed quantization scheme achieves at least $(0,\delta)$-differential privacy under provable convergence accuracy.
\end{itemize}

\textit{Organization:} Section \uppercase\expandafter {\romannumeral2} introduces the system model and threat model. Section \uppercase\expandafter {\romannumeral3} presents MSP-FL along with theoretical analysis of privacy and convergence properties. Section \uppercase\expandafter {\romannumeral4} further extends MSP-FL to MSPDQ-FL by incorporating dynamic quantization. Discussions are presented in Section \uppercase\expandafter{\romannumeral5}. Numerical results are shown in Section \uppercase\expandafter {\romannumeral6}. Finally, Section \uppercase\expandafter {\romannumeral7} concludes the paper.

\textit{Notations:} Throughout this paper, we introduce the following conventions: Sets are represented by upper calligraphy letters; vectors are represented by bold lowercase letters; matrices are denoted by bold uppercase letters; scalars are written as regular lowercase or uppercase letters.
$\mathbb{R}^{d}$, $^{*}\mathbb{R}^d$ and $\mathbb{R}^{n\times d}$ denote the sets of real vectors of dimension $d$, hyperreal vectors of dimension $d$, and real matrices of dimension $n\times d$, respectively. The identity matrix and the vector of ones are represented by $\mathbf{I}$ and $\mathbf{1}$ with proper dimensions, respectively. The superscript $\top$ denotes the transpose of a vector or matrix. Let $\text{col}(w_1,...,w_n)$ be a column vector of $\{w_i\in\mathbb{R}|i=1,...,n\}$. $[i]$ is the index number of $i$ in a set or sequence. For a matrix $\mathbf{U}$, $\lambda_{k,\mathbf{U}}$ denotes the $k$-th largest eigenvalue of $\mathbf{U}$, and $\lambda_{\min,\mathbf{U}}$ is the smallest one. Denote by $\|\cdot\|$ the Euclidean norm (or Frobenius norm) of a vector (or a matrix). The stochastic expectation is denoted by $\mathbb{E}\{\cdot\}$.

\section{Preliminaries}
This section reviews a basic framework of FL and presents the considered problem together with threat model.

\subsection{System Model}
In FL, a server and a set of clients $\mathcal{N}=\{1,...,N\}$ collaboratively train a model, parameterized by $\mathbf{w}\in \mathbb{R}^d$, by solving the following decentralized learning problem
\begin{equation}\label{learning problem}
  \min\limits_{\mathbf{w}\in\mathbb{R}^d}\quad F(\mathbf{w})\triangleq \sum_{i=1}^{N}p_iF_i(\mathbf{w}),
\end{equation}
where $F_i(\mathbf{w})$ is the local loss function of $i$-th client which evaluates the fitness between the global model parameter\footnote{In the remaining paper, the `model parameter' is called `model' for brevity.} $\mathbf{w}$ and the local dataset $\mathcal{D}_i$. ${p}_i={|\mathcal{D}_i|}/{\sum_{i=1}^{N}}|\mathcal{D}_i|$, where $|\mathcal{D}_i|$ denotes the size of $\mathcal{D}_i$. Therewith, one can conclude that the global loss function $F(\mathbf{w})$ evaluates how well the global model fits the overall data of all the clients. Given a mini-batch sample $\bm{\zeta}_i$ with data size $s_i$, the local loss function can be further represented as
$
  F_i(\mathbf{w},\bm{\zeta}^i)=\frac{1}{s_i}\sum_{j=1}^{s_i}f_i(\mathbf{w},\zeta^{i,j}),
$
where $f_i(\mathbf{w},\zeta^{i,j})$ is the local loss function with respect to the $j$-th selected data sample in $\bm{\zeta}^i$. If the mini-batch sample is the whole local dataset, i.e., $s_i=|\mathcal{D}_i|$, it has $F_i(\mathbf{w},\bm{\zeta}^i)=F_i(\mathbf{w})$.

For the learning problem in \eqref{learning problem}, we make the following assumptions that are commonly employed in the context of distributed learning \cite{niid,uveq,joint,Asymptotic-Analysis-of-Federated-Learning-Under-Event-Triggered-Communication-TSP-2023-KTH}.

\begin{assumption}
  (Assumptions on loss functions and gradients)

  1) \textit{(Convexity) Each loss function $F_i$ is $\mu$-strongly convex, i.e., for all $\mathbf{v}$ and $\mathbf{w}$, it has
        $F_i(\mathbf{v})-F_i(\mathbf{w})-\nabla F_i(\mathbf{w})^{\top}(\mathbf{v}-\mathbf{w})\geq \frac{\mu}{2}\|\mathbf{v}-\mathbf{w}\|^2$.}

2) \textit{(Smoothness) Each loss function $F_i$ is $L$-Lipschitz smooth, i.e., for all $\mathbf{v}$ and $\mathbf{w}$, it has
        $F_i(\mathbf{v})-F_i(\mathbf{w})-\nabla F_i(\mathbf{w})^{\top}(\mathbf{v}-\mathbf{w})\leq \frac{L}{2}\|\mathbf{v}-\mathbf{w}\|^2$.}

3) \textit{(Bounded variance of stochastic gradients) For each client, the variance of the stochastic gradient on any mini-batch data is bounded by a positive constant $\sigma_i$, i.e.,
    $\mathbb{E}\left\{\|\nabla F_i(\mathbf{w}_{t}^i,\bm{\zeta}^i)-\nabla F_i(\mathbf{w}_{t}^i)\|^2\right\}\leq\sigma_i$, $\forall i\in\mathcal{N}$ and $\forall t$.}

4) \textit{(Uniformly bounded stochastic gradients) There exists a positive constant $G>0$, such that the expected squared norm of the stochastic gradients are uniformly bounded, i.e.,
    $\mathbb{E}\left\{\|\nabla F_i(\mathbf{w}_{t}^i,\bm{\zeta}^i)\|^2\right\}\leq G$, $\forall i\in\mathcal{N}$ and $\forall t$.}
\end{assumption}

In this work, we consider a general FL system model that follows FedAvg with only partial clients participating in model uploading after each learning round and with non-i.i.d. data distribution$\footnote{The degree of non-i.i.d., namely heterogeneity, is quantified as $\Gamma= F^*-\sum_{i=1}^{N}p_iF_i^*$, where $F^*$ and $F_i^*$ denote the minimum values of global loss function and local loss function, respectively \cite{niid}.}$ \cite{fedavg}. Specifically, the server and clients execute the following procedures at each learning round $t$:

(1) \textbf{Server broadcasts global model to partial clients.} The server randomly selects a set of clients, denoted as $\mathcal{S}_{t}$ with $|\mathcal{S}_{t}|=M$, and broadcasts the latest global model $\mathbf{w}_{t-1}$ to the selected clients.

(2) \textbf{Clients update local models.} Based on the received global model, each client $i\in\mathcal{S}_{t}$ updates its local model by iteratively running the mini-batch stochastic gradient descent (SGD) method as follows:
\begin{subequations}\label{local training}
    \begin{eqnarray}
       \hspace{-17pt} \mathbf{w}_{t,0}^i\hspace{-7pt}&=&\hspace{-7pt}\mathbf{w}_{t-1},\\
       \hspace{-17pt} \mathbf{w}_{t,e+1}^i \hspace{-7pt}&=&\hspace{-7pt} \mathbf{w}_{t,e}^i-\eta_{t}\nabla F_i(\mathbf{w}_{t,e}^i,\mathbf{\zeta}_e^i), \forall e\in\{0,...,E-1\},\\
       \hspace{-17pt} \mathbf{w}_{t}^i\hspace{-7pt}&=&\hspace{-7pt}\mathbf{w}_{t,E}^i,
    \end{eqnarray}
\end{subequations}
where $E$ is the number of SGD updates during each learning round. $\eta_{t}$ is a time-varying learning rate to be designed. $\mathbf{w}_{t}^i$ is the updated local model for upload.

(3) \textbf{Clients upload local models for global aggregation.} Clients in $\mathcal{S}_{t}$ upload local models to the server. Then, the server generates a new global model by average aggregation. The learning process terminates after a total of $T$ rounds.

\subsection{Threat Model}
We assume that there exists a trusted authority in the considered FL scenario besides the clients and the central server, where the trusted authority provides trusted and limited information exchanges among legitimate clients. Meanwhile, adversaries are considered as follows:

\textit{Honest-but-curious adversaries} follow the stipulated protocol without falsifying the shared information. Meanwhile, adversaries attempt at least two classes of attacks: i) \emph{data-oriented attacks}-such as model inversion, membership inference, and gradient/parameter-leakage (e.g., iDLG)—which aim to reconstruct an individual example or test its presence in a client’s dataset and can even recover labels or images from a single step \cite{reconstruct,idlg}; (ii) \emph{model-oriented attacks}-such as model and functionality stealing attacks which try to infer a client's local model parameters or create a knock-off model that should be indistinguishable to the adversary \cite{modelprivacy_attack1,modelprivacy_attack2}. Both types of attacks pose a direct and substantial threat to model privacy. We denote them as a set $\mathcal{A}_h\triangleq\mathcal{A}_c\bigcup \mathcal{A}_s$, where $\mathcal{A}_c$ denotes the set of honest-but-curious clients and $\mathcal{A}_s$ denotes the server. Note that the information is shared with each other in $\mathcal{A}_h$.

The considered threat model that consists of both honest-but-curious server and clients is commonly seen in the field of FL \cite{validity2}. Meanwhile, such a threat model exist in many practical scenarios that are in the form of non-cooperative game problem \cite{validity6,validity7}. For instance, in the problem of grid-to-vehicle energy exchange between a smart grid and plug-in electric vehicle groups (PEVG), each PEVG aims to maximize the benefit from consumed energy or minimize the cost, while the grid aims to maximize the revenue by selling energy surplus to PEVGs. Therefore, the curious grid tends to control the nature of energy consumption of PEVGs. Most PEVGs tend to protect their own privacy and are curious about others' privacy information including energy consumption in order to maximize their own utility. However, some PEVGs might collaboratively share inferred information with the grid, one possibility being that they are stakeholders in the market (e.g., an energy trading center and its affiliated units). In extreme cases, e.g., when all clients are compromised, the honest-but-curious server may share inferences with all clients. Regarding the implementation of privacy inference sharing, it can be directly achieved through the existing communication channels between the server and (honest-but-curious) clients. For privacy inference sharing among (honest-but-curious) clients, the server can support the sharing by acting as a relay.

In some other works, external eavesdroppers are also considered as a type of adversaries, which wiretap all transmitted data and attempt to estimate local models by certain estimating algorithm. Usually, eavesdroppers are more disruptive than honest-but-curious adversaries, because they can estimate local models by certain algorithms with the accumulated information. However, owing to the existence of the honest-but-curious server, external eavesdroppers actually access less information than honest-but-curious adversaries. Hence, we mainly consider the privacy preservation in FL against adversaries in the set $\mathcal{A}_h$ in this paper.

\section{MSP-FL: Model-Splitting Privacy-preserving Federated Learning}
In this section, we first present MSP-FL, and then provide analysis of its privacy and convergence properties.

\subsection{The Proposed Privacy-preserving Scheme: MSP-FL}
To simultaneously make provision for privacy enhancement and learning accuracy, the key idea of MSP-FL which is inspired by \cite{wyq}, is stochastically splitting each local model into two types of submodels, i.e., a visible and invisible submodels, and uploading the visible ones to the server. Compared to FedAvg, MSP-FL brings about subsequent $K_t$ communication rounds between the server and clients after each learning round $t$ instead of one round communication.

The overall illustration of MSP-FL is depicted in Fig. 1.
{Concrete details about model splitting mechanism\footnote{{Model splitting in our work differs from the split learning explored in other research such as \cite{splitting_learning}. In our proposed model splitting, the model parameters are randomly partitioned into submodel parameters, while preserving the dimensionality during aggregation. In contrast, split learning aims to divide the machine learning model (e.g., a deep neural network) into at least two submodels, typically involving a few layers of the entire network, resulting in dimensionality reduction. As such, each client only trains a submodel that consists of a subset of layers, while the remaining layers are trained at the server. In the remaining paper, the `submodel parameter' is called `submodel' for brevity, which actually differs from `submodel' in split learning.}} are stated as follows: Let's denote the index of an arbitrarily selected client as $i$. At each learning round $t$, once entirely finishing training its local model $\mathbf{w}_{t}^i$ as described in \eqref{local training}, it splits $\mathbf{w}_{t}^i$ into $m_i+1$ submodels where $m_i\geq 1$ is unknown to any client $j\in\{\mathcal{S}_t\setminus i\}$ and the server, and randomly assigns initial values for submodels. Without loss of generality, we denote one of submodels as $\mathbf{w}_{t}^{i,\alpha}\in\mathbb{R}^d$, and the others as $\mathbf{w}_{t}^{i,\beta_n}\in\mathbb{R}^d$, $n\in\{1,...,m_i\}$. The criteria for randomly selecting initial values of submodels is: The initial value of $\mathbf{w}_{t}^{i,\alpha}$, denoted by $\mathbf{w}_{t}^{i,\alpha}[0]$ is randomly drawn from a uniform distribution over the interval $[\varepsilon \mathbf{w}_{t}^i,(2-\varepsilon) \mathbf{w}_{t}^i]$ where $\varepsilon\in [0,1)$ is a splitting factor. Initial values of other submodels, denoted by $\mathbf{w}_{t}^{i,\beta_n}[0]$, $n\in\{1,...,m_i\}$, are not required to be selected in this manner but satisfy $\sum_{n=1}^{m_i}\mathbf{w}_{t}^{i,\beta_n}[0]=(1+m_i)\mathbf{w}_{t}^{i}-\mathbf{w}_{t}^{i,\alpha}[0]$.
After such a model splitting procedure, client $i$ uploads $\mathbf{w}_{t}^{i,\alpha}[0]$ to the server, while $\mathbf{w}_{t}^{i,\beta_n}[0]$, $n\in\{1,...,m_i\}$, are kept private and only interact internally with $\mathbf{w}_{t}^{i,\alpha}[0]$. It is precisely for this fact that we refer to $\mathbf{w}_{t}^{i,\alpha}$ as the visible submodel and $\mathbf{w}_{t}^{i,\beta_n}$, $n\in\{1,...,m_i\}$, as invisible submodels for any $i\in\mathcal{S}_t$. The reason for such a selection is discussed in Remark 1.} In the subsequent communication rounds between the server and the selected clients after the learning round $t$, the $i$-th client downloads the latest global model from the server, which is an average aggregation of visible submodels (i.e., $\frac{1}{M}\sum_{i\in\mathcal{S}_{t}}\mathbf{w}_{t}^{i,\alpha}[k]$), to update the current local submodels.
\begin{figure}[t]
   \centering
   \includegraphics[scale=0.22]{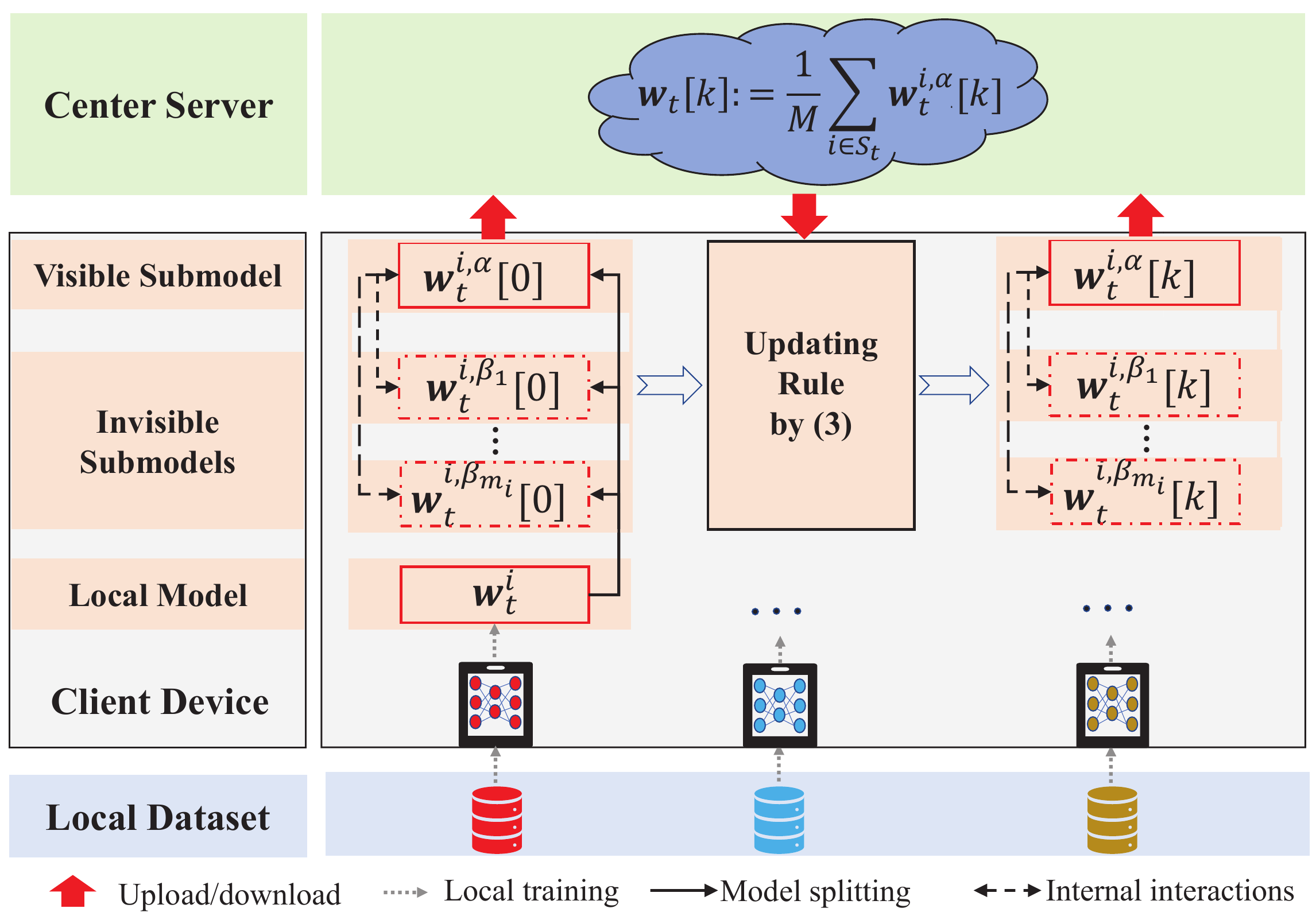}
   \caption{Illustration of MSP-FL}\label{fig-MSP-FL}
 \end{figure}

In such a manner, one can see that only the visible part, which acts the role of the original local model, is exposed to the server or an eavesdropper either in the uplink or downlink channels, though the invisible part internally interacts with it.

Under the proposed MSP-FL, the updating rule of the selected $i$-th client after $t$-th learning round is given as follows:
\begin{subequations}\label{decomposition1}
    \begin{eqnarray}
     \hspace{-10pt}  \mathbf{w}_{t}^{i,\alpha}[k+1] \hspace{-7pt}&=&\hspace{-7pt} \mathbf{w}_{t}^{i,\alpha}[k]+ \epsilon\left(\mathbf{w}_{t}[k]-\mathbf{w}_{t}^{i,\alpha}[k]\right)\nonumber \\
       &&\hspace{-7pt}+\sum_{n=1}^{m_i}a_{i,\alpha\beta_n}[k]\left(\mathbf{w}_{t}^{i,\beta_n}[k]-\mathbf{w}_{t}^{i,\alpha}[k]\right),\label{decomposition1a} \\
     \hspace{-10pt}  \mathbf{w}_{t}^{i,\beta_n}[k+1] \hspace{-7pt}&=& \hspace{-7pt} \mathbf{w}_{t}^{i,\beta_n}[k]+ a_{i,\alpha\beta_n}[k]\left(\mathbf{w}_{t}^{i,\alpha}[k]-\mathbf{w}_{t}^{i,\beta_n}[k]\right),\nonumber\\
     \hspace{-7pt}&&\hspace{-7pt}\qquad\qquad\qquad\qquad n=1,2,...,m_i, \label{decomposition1b}
    \end{eqnarray}
\end{subequations}
where $\epsilon$ is a design parameter. $a_{i,\alpha\beta_n}[k]$ is a weight between the visible submodel and the $n$-th invisible submodel of client $i$. $\mathbf{w}_{t}[k]$ is the global model at the $k$-th communication round.
Correspondingly, the current $(k+1)$-th aggregation at the server side is given by
\begin{eqnarray}\label{aggregation2}
   \mathbf{w}_{t}[k+1]\hspace{-7pt}&=&\hspace{-7pt}\frac{1}{M}\sum_{i\in\mathcal{ S}_{t}}\mathbf{w}_{t}^{i,\alpha}[k+1].
\end{eqnarray}

From \eqref{decomposition1} and \eqref{aggregation2}, one can observe that
\begin{eqnarray}\label{sum}
   \hspace{-19pt} &&\hspace{-7pt} \sum_{i\in\mathcal{S}_{t}}  \left( \mathbf{w}_{t}^{i,\alpha}[k+1]+\sum_{n=1}^{m_i}\mathbf{w}_{t}^{i,\beta_n}[k+1]\right)\nonumber\\
\hspace{-19pt} &=&\hspace{-7pt}\sum_{i\in\mathcal{S}_{t}} \left( \mathbf{w}_{t}^{i,\alpha}[k]+ \sum_{n=1}^{m_i} \mathbf{w}_{t}^{i,\beta_n}[k]+ \epsilon\left(\mathbf{w}_{t}[k]-\mathbf{w}_{t}^{i,\alpha}[k]\right)\right).
\end{eqnarray}
Clearly, by substituting \eqref{aggregation2} into \eqref{sum}, the sum of visible part and invisible part over the selected clients is time-invariant with respect to $k$ after the current learning round. Meanwhile, this together with the asymptotic convergence result, that is, it guarantees to reach average consensus under dynamics \eqref{decomposition1} with appropriate step sizes and weights, yields
$
  \lim_{k\rightarrow\infty}\mathbf{w}_{t}^{i,\alpha}[k]=\lim_{k\rightarrow\infty}\mathbf{w}_{t}^{i,\beta_n}[k], \forall i\in\mathcal{S}_{t}, n\in\{1,...,m_i\}
$. That is to say, privacy-utility trade-off can be avoided via dedicated designs for injected noises and interaction protocol as in \eqref{decomposition1} for submodels. However, in practical settings of FL, both the visible and invisible parts are unable to achieve completely consensus state due to the restricted communication resources. Thereby, following a finite and variable number of communication rounds denoted by $K_t$ that depends on $t$, the global model is determined as an aggregated value of the last communication round, i.e.,
\begin{equation}\label{aggregation3}
  \mathbf{w}_{t}=\frac{1}{M}\sum_{i\in\mathcal{ S}_{t}}\mathbf{w}_{t}^{i,\alpha}[K_t].
\end{equation}

Additionally, the clients selection scheme in this work is in line with \cite{participation,niid}. Specifically, the clients contained in $\mathcal{S}_{t}$ are randomly selected with replacement for $M$ times according to the independent distribution probabilities $p_1,...,p_N$. Then, it has the following lemma about unbiased sampling scheme.

\begin{lemma}
  Under the above random clients selection scheme, the partial aggregation after $t$-th learning round as shown in \eqref{aggregation3} is an unbiased estimation of the full aggregation $\tilde{\mathbf{w}}_{t}=\sum_{i=1}^{N}p_i\mathbf{w}_{t}^{i,\alpha}[K_t]$ with regard to any $\mathcal{S}_t$, i.e., $\mathbb{E}\{\mathbf{w}_{t}\}=\tilde{\mathbf{w}}_{t}$.
\end{lemma}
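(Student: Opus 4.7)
The plan is to treat the multiset $\mathcal{S}_t$ as a sequence of $M$ i.i.d. random draws from the discrete distribution $(p_1,\ldots,p_N)$ on the client set $\mathcal{N}$, then linearity of expectation reduces the claim to a one-line calculation. There is no real obstacle here beyond making the sampling model explicit, since the submodels $\mathbf{w}_t^{i,\alpha}[K_t]$ are, for the purpose of this lemma, deterministic quantities conditioned on the training history; only the selection $\mathcal{S}_t$ is random.

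Concretely, I would introduce the indicator / index notation. Let $X_m$ denote the index of the $m$-th sampled client, $m=1,\ldots,M$, so that by the sampling-with-replacement rule $\Pr(X_m=i)=p_i$ independently across $m$. The partial aggregation in \eqref{aggregation3} can be rewritten as
\begin{equation*}
\mathbf{w}_t \;=\; \frac{1}{M}\sum_{m=1}^{M}\mathbf{w}_t^{X_m,\alpha}[K_t].
\end{equation*}
Taking expectation with respect to the client-selection randomness, and using linearity together with the fact that each $X_m$ has the same marginal law, I obtain
\begin{equation*}
\mathbb{E}\{\mathbf{w}_t\} \;=\; \frac{1}{M}\sum_{m=1}^{M}\mathbb{E}\bigl\{\mathbf{w}_t^{X_m,\alpha}[K_t]\bigr\} \;=\; \frac{1}{M}\sum_{m=1}^{M}\sum_{i=1}^{N} p_i\,\mathbf{w}_t^{i,\alpha}[K_t] \;=\; \sum_{i=1}^{N} p_i\,\mathbf{w}_t^{i,\alpha}[K_t] \;=\; \tilde{\mathbf{w}}_t,
\end{equation*}
which is precisely the desired identity.

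The only subtlety worth flagging is that the expectation must be interpreted as conditional on the filtration generated by the local training up to round $t$ (i.e., the visible submodels $\{\mathbf{w}_t^{i,\alpha}[K_t]\}_{i=1}^N$ are treated as fixed under this conditional expectation). Since sampling is performed independently of the local training and all clients are reachable with positive probability $p_i$, no measurability or independence issue arises, and the argument goes through without needing any of the convergence machinery from Assumption~1 or the splitting dynamics \eqref{decomposition1}.
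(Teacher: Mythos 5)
Your proposal is correct and follows essentially the same route as the paper's proof (which itself adapts Lemma~4 of the FedAvg non-i.i.d. reference): both treat $\mathcal{S}_t$ as $M$ i.i.d. index draws with marginal law $(p_1,\ldots,p_N)$, condition on the fixed sequence $\{\mathbf{w}_t^{i,\alpha}[K_t]\}_{i=1}^N$, and apply linearity of expectation with identical marginals. No gap.
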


\begin{proof}
   The proof of Lemma 1 follows the line of proof of Lemma 4 in \cite{niid}. Denote a fixed sequence as $\{\mathbf{w}_{t}^{i,\alpha}[K_t]|i=1,...,N\}$, and a multiset $\mathcal{S}_{t}=\{i_1,...,i_M\}\subset \mathcal{N}$ is sampled at sampling time $t$ where each sample $\mathbf{w}_{t}^{i,\alpha}[K_t]$ is selected with probability $q_i$. Meanwhile, we require each sampling distribution to be identical. Then, we have the following observation:
   \begin{eqnarray}\label{lemma1_7}
\hspace{-10pt}\mathbb{E}\Big\{\frac{1}{M}\sum_{j\in\mathcal{S}_{t}}\mathbf{w}_{t}^{j,\alpha}[K_t]\Big\}\hspace{-7pt}&=&\hspace{-7pt}\mathbb{E}\Big\{\frac{1}{M}\sum_{j=1}^{M}\mathbf{w}_{t}^{i_j,\alpha}[K_t]\Big\}\nonumber\\
\hspace{-7pt}&=&\hspace{-7pt}\mathbb{E}\left\{\mathbf{w}_{t}^{i_j,\alpha}[K_t]\right\}, \quad \forall i_j\in\mathcal{S}_t,
   \end{eqnarray}
where the last equality is obtained by the identical sampling distribution. Moreover, recalling that $\mathbb{E}\{\mathbf{w}_{t}^{i_j,\alpha}[K_t]\}=\sum_{i=1}^{N}q_i\mathbf{w}_t^{i,\alpha}[K_t]$. By selecting the probability $q_i$ as $p_i$, we complete the proof.
\end{proof}

\begin{remark}
  Note that, after the entire training process at each learning round $t$, the initial value of each visible submodel, i.e., $\mathbf{w}_{t}^{i,\alpha}[0]$, is randomly drawn from the uniform distribution over the interval $[\varepsilon \mathbf{w}_{t}^i,(1+m_i-\varepsilon) \mathbf{w}_{t}^i]$. The reason is that, as such, it follows with two key properties that are shown in Lemma 8: (1) unbiased stochastic splitting; (2) bounded aggregation error of visible submodels. For those invisible submodels, we can also randomly select their initial values in a similar criterion as long as the condition $\sum_{n=1}^{m_i}\mathbf{w}_{t}^{i,\beta_n}[0]=(1+m_i)\mathbf{w}_{t}^{i}-\mathbf{w}_{t}^{i,\alpha}[0]$ still holds.
\end{remark}

\subsection{The Property of Privacy Preservation}
In this subsection, we will present analysis of MSP-FL for its privacy guarantee in the presence of adversaries.

For the sake of subsequent analysis, some notations are given as follows. Let us rewrite the term $ \epsilon(\mathbf{w}_{t}[k]-\mathbf{w}_{t}^{i,\alpha}[k])$ in \eqref{decomposition1a} as $\epsilon  \sum_{j\in\mathcal{S}_t}\alpha_j[k](\mathbf{w}_t^{j,\alpha}[k]-\mathbf{w}_t^{i,\alpha}[k])$, where $\alpha_j[k]=\frac{1}{M}$ for all $j\in\mathcal{S}_t$ and $k\in\{0,1,...,K_t-1\}$. Denote the information set accessible to $\mathcal{A}_h$ at the $k$-th communication round after the $t$-th learning round as $\mathcal{I}_{t}^{\mathcal{A}_h}[k]=\{\mathcal{I}_{t}^j[k],\mathcal{I}_{t}^{\mathcal{A}_s}[k]|j\in\mathcal{A}_c\}$, where
$\mathcal{I}_{t}^j[k]=\{\mathbf{w}_{t}^{j,\alpha}[k],\mathbf{w}_{t}^{j,\beta_n}[k],\alpha_j[k],a_{j,\alpha\beta_n}[k],\mathbf{w}_{t}[k]|n\in\{1,...,m_j\}\}$
and $\mathcal{I}_{t}^{\mathcal{A}_s}[k]=\{\mathbf{w}_{t}^{i,\alpha}[k]|i\in\mathcal{S}_{t}\}$. 

To see the performance of privacy preservation, we first show the nondeterminacy of local models.

\begin{lemma}\label{lemma2}
  Under MSP-FL, for any $i\in\mathcal{S}_{t}$, if $m_i$ is inaccessible to $\mathcal{A}_h$, then the local model $\mathbf{w}_{t}^i$ is indeterminate to any adversary in $\mathcal{A}_h$ in an asymptotic sense.
\end{lemma}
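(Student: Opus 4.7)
The plan is to fix an honest client $i \notin \mathcal{A}_c$ and exhibit, for the fixed observation history available to $\mathcal{A}_h$, infinitely many distinct candidate values of $\mathbf{w}_t^i$ that are all compatible with that history. First I would catalogue the adversary's information about client $i$ across the communication rounds $k=0,1,\ldots,K_t$: through the server the full visible trajectory $\{\mathbf{w}_t^{i,\alpha}[k]\}$ and the global aggregate $\{\mathbf{w}_t[k]\}$ are known, as are the public constants $\epsilon$ and $\alpha_j[k]$; the split count $m_i$, the weights $a_{i,\alpha\beta_n}[k]$ and every invisible submodel $\mathbf{w}_t^{i,\beta_n}[k]$ remain hidden by hypothesis.

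Second, I would isolate the only equation linking the hidden quantities to the visible ones by rearranging \eqref{decomposition1a}, obtaining
\[
\sum_{n=1}^{m_i} a_{i,\alpha\beta_n}[k]\bigl(\mathbf{w}_{t}^{i,\beta_n}[k]-\mathbf{w}_{t}^{i,\alpha}[k]\bigr) = \mathbf{w}_{t}^{i,\alpha}[k+1]-\mathbf{w}_{t}^{i,\alpha}[k]-\epsilon\bigl(\mathbf{w}_{t}[k]-\mathbf{w}_{t}^{i,\alpha}[k]\bigr),
\]
whose right-hand side is computable by $\mathcal{A}_h$. This is a single $d$-dimensional vector identity per round against $m_i(d{+}1)$ scalar unknowns; because $m_i$ is itself a free positive integer, the feasible set of hidden realisations already has unbounded dimension before any observation is imposed.

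Third, I would convert this degree-of-freedom count into concrete non-uniqueness: for any candidate $m_i' \geq 1$, pick positive weights $a'_{i,\alpha\beta_n}[k]$ and initial values $\mathbf{w}_t^{i,\beta_n'}[0]$ freely on a set of positive Lebesgue measure, then use \eqref{decomposition1b} together with the displayed equation to solve recursively for a valid invisible trajectory (letting one chosen coordinate absorb the residual at each step). Any such tuple reproduces the observed visible trajectory exactly, so the induced local model $\mathbf{w}_t^{i'} = \tfrac{1}{1+m_i'}\bigl(\mathbf{w}_t^{i,\alpha}[0]+\sum_{n=1}^{m_i'}\mathbf{w}_t^{i,\beta_n'}[0]\bigr)$ is equally consistent with $\mathcal{I}_t^{\mathcal{A}_h}[\cdot]$, and varying $m_i'$ and the free initial values produces a continuum of distinct candidates for $\mathbf{w}_t^i$. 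The asymptotic clause is handled by letting $K_t \to \infty$ and observing that the consensus limit only exposes a weighted average whose weights $(1+m_i)/\sum_{j\in\mathcal{S}_t}(1+m_j)$ themselves depend on the hidden counts, so no additional identifying information is gained in the limit. The main obstacle I anticipate is ensuring that the alternative realisations remain \emph{statistically} admissible under the splitting rule stated before Remark 1; in particular, $\mathbf{w}_t^{i,\alpha}[0]$ must still lie in the mandated interval around the alternative $\mathbf{w}_t^{i'}$. I would clear this by restricting the free parameters to an open neighbourhood of the true values, which preserves both the interval constraint and the conservation identity $\sum_{n}\mathbf{w}_t^{i,\beta_n'}[0]=(1+m_i')\mathbf{w}_t^{i'}-\mathbf{w}_t^{i,\alpha}[0]$ by construction.
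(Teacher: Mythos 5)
Your overall intuition is right, but the central constructive step has a genuine gap, and the paper's own argument is both shorter and avoids it. The paper does not attempt to enumerate alternative hidden realisations at all: it forms the conserved-type quantity $\mathbf{z}_{t}^i[k]=\mathbf{w}_{t}^{i,\alpha}[k]+\sum_{n=1}^{m_i}\mathbf{w}_{t}^{i,\beta_n}[k]$, adds \eqref{decomposition1a} and \eqref{decomposition1b} so that the interaction terms cancel to give $\mathbf{z}_{t}^i[k+1]=\mathbf{z}_{t}^i[k]+\epsilon(\mathbf{w}_{t}[k]-\mathbf{w}_{t}^{i,\alpha}[k])$, telescopes, and observes that recovering $\mathbf{w}_{t}^i=\mathbf{z}_{t}^i[0]/(1+m_i)$ requires $\lim_{k\to\infty}\mathbf{z}_{t}^i[k]=(1+m_i)\lim_{k\to\infty}\mathbf{w}_{t}^{i,\alpha}[k]$, which the adversary cannot evaluate without $m_i$ even though the consensus value and the correction sum are fully observable. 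That is the entire proof; non-uniqueness over the unknown integer $m_i$ is all the lemma claims, so no continuum of candidates is needed.

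The gap in your third step is this: once you fix $m_i'$, the initial invisible values, and all the weights, the recursion \eqref{decomposition1b} determines the \emph{entire} invisible trajectory, so your rearranged form of \eqref{decomposition1a} is not something you can "solve recursively" --- it becomes, at each round $k$, a $d$-dimensional consistency constraint $\sum_{n} a'_{i,\alpha\beta_n}[k](\mathbf{w}_{t}^{i,\beta_n}[k]-\mathbf{w}_{t}^{i,\alpha}[k])=r[k]$ in which only the $m_i'$ scalar weights at round $k$ remain free, the state differences having already been fixed by earlier choices. For $m_i'<d$ this is generically infeasible unless the initial invisible values are co-designed so that $r[k]$ lies in the relevant span at every round, and "letting one chosen coordinate absorb the residual" would force an invisible state that violates \eqref{decomposition1b}. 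Consequently your claim that the free parameters can be drawn from a set of positive Lebesgue measure while exactly reproducing the observed visible trajectory is unsubstantiated; the feasible set is cut down to a lower-dimensional variety by $dK_t$ scalar constraints. (This is precisely why the paper's Theorem~1, which does construct alternative realisations, perturbs only the round-$0$ quantities of two cooperating clients and leaves every later round identical to the true one.) Your closing remark about the consensus limit being a weighted average with weights $(1+m_i)/\sum_{j}(1+m_j)$ is correct and is essentially the paper's argument in disguise; had you led with that and dropped the continuum construction, the proof would stand.
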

\begin{proof}
Let $\mathbf{z}_{t}^i[k]=\mathbf{w}_{t}^{i,\alpha}[k]+\sum_{n=1}^{m_i}\mathbf{w}_{t}^{i,\beta_n}[k]$. By \eqref{decomposition1a} and \eqref{decomposition1b}, one has
\begin{equation*}
  \mathbf{z}_{t}^i[k+1]=\mathbf{z}_{t}^i[k]+\epsilon(\mathbf{w}_{t}[k]-\mathbf{w}_{t}^{i,\alpha}[k]).
\end{equation*}
By summing the above equation for all $k$ together, we have
\begin{eqnarray}\label{z}
  \mathbf{z}_{t}^i[0]\hspace{-7pt}&=&\hspace{-7pt}\lim\limits_{k\rightarrow\infty}\bigg[\mathbf{z}_{t}^i[k]\hspace{0pt}-\hspace{0pt}\epsilon\sum_{k_1=0}^{k-1}\Big({{\mathbf{w}_{t}[k_1]}\hspace{0pt}-\hspace{0pt}\mathbf{w}_{t}^{i,\alpha}[k_1]}\Big) \bigg].
\end{eqnarray}
One can observe that any adversary in $\mathcal{A}_h$ can not exactly infer the value of $\lim_{k\rightarrow\infty}\mathbf{z}_{t}^i[k]$, due to the unknown number of local invisible submodels. Consequently, $\mathbf{w}_{t}^i$ is not uniquely estimated by any adversary in $\mathcal{A}_h$ by $\mathbf{z}_t^i[0]$ from \eqref{z}.
\end{proof}

\begin{remark}
  For any $i\in\mathcal{S}_{t}$, if $m_i$ is fixed and known to $\mathcal{A}_h$, then MSP-FL degrades to a direct application of the state decomposition mechanism in \cite{wyq}. In this circumstance, $\mathbf{w}_{t}^i$ is uniquely determined by the adversary set $\mathcal{A}_h$ by \eqref{z} in an asymptotic sense because
  $\lim_{k\rightarrow\infty}\mathbf{z}_{t}^i[k]=\lim_{k\rightarrow\infty}(1+m_i)\mathbf{w}_{t}^{i,\alpha}[k]$. Even though $k=K_t<\infty$ in FL resulting in an incomplete consensus among local submodels, an adversary can still derive a sufficiently approximate range of $\mathbf{w}_{t}^i$ from an estimated $\mathbf{z}_t^i[K_t]$. The observation reveals that direct application of the state decomposition mechanism in \cite{wyq} is unserviceable to preserve the privacy in FL. An intuitive explanation is that, in a decentralized network, it can be deemed that any legitimate client $j$ of client $i$ is directly connected to $\mathcal{A}_h$, resulting in the term ${\mathbf{w}_{t}[k_1]}-\mathbf{w}_{t}^{i,\alpha}[k_1]$ in \eqref{z} losing its capability to help preserve the initial value $\mathbf{z}_{t}^i[0]$. Once the convergence value of $\lim_{k\rightarrow\infty}\mathbf{z}_{t}^i[k]$ or its approximate range can be easily obtained, then $\mathbf{z}_{t}^i[0]$ is estimated with some probability. 
  In our work, we show that one of the cruxes to alter the determinacy of local model in such a decentralized network by \eqref{z} is to hide the local splitting rule, specifically, to simply hide the number of local invisible submodels. This is one of the differences between this work and others following the line of state decomposition mechanism, e.g., \cite{wyq}.
\end{remark}

Now, we show the indistinguishability of the information set accessible to adversaries in $\mathcal{A}_h$. Define $\mathcal{I}_{t}^{\mathcal{A}_h}(0,K_t)$ as the union of information set until the time instant $k=K_t$ after the $t$-th learning round. For any feasible $\mathcal{I}_{t}^{\mathcal{A}_h}(0,K_t)$, denote $\mathcal{S}(\mathcal{I}_{t}^{\mathcal{A}_h}(0,K_t),i)$ as the set of all feasible local models $\mathbf{w}_{t}^i$ at client $i$ that a set of initial submodels $\{\mathbf{w}_{t}^{l,\alpha}[0],\mathbf{w}_{t}^{l,\beta_n}[0]|l\in\mathcal{S}_{t},n\in\{1,...,m_l\}\}$ and a set of weight sequences $\{a_{l,\alpha\beta_n}[k],\alpha_l[k]|l\in \mathcal{S}_{t}, n\in\{1,...,m_l\}, k\in\{0,1,...,K_t-1\}\}$ exist such that the set of sequences $\{a_{j,\alpha\beta_n}[k_1],\alpha_j[k_1],\mathbf{w}_{t}^{i,\alpha}[k_2],\mathbf{w}_{t}^{j,\beta_n}[k_2]| j\in\mathcal{A}_c, i\in\mathcal{S}_{t}, n\in\{1,...,m_j\}, k_1\in\{0,1,...,K_t-1\}, k_2\in\{0,1,...,K_t\}\}$ generated by MSP-FL is the same as that in the adversary information set $\mathcal{I}_{t}^{\mathcal{A}_h}(0,K_t)$.

The intrinsic meaning of set $\mathcal{S}(\mathcal{I}_{t}^{\mathcal{A}_h}(0,K_t),i)$ is that it comprises all possible current local models of client $i$ that yields the same $\mathcal{I}_{t}^{\mathcal{A}_h}(0,K_t)$. Meanwhile, we define the diameter of $\mathcal{S}(\mathcal{I}_{t}^{\mathcal{A}_h}(0,K_t),i)$ as
\begin{equation}\label{diam}
  \hspace{-6pt}\text{Diam}(\mathcal{I}_{t}^{\mathcal{A}_h}(0,K_t))=\sup\limits_{\{\mathbf{w}_{t}^{i},\overline{\mathbf{w}}_{t}^i\}\in\mathcal{S}(\mathcal{I}_{t}^{\mathcal{A}_h}(0,K_t),i)}\|\mathbf{w}_{t}^{i}-\overline{\mathbf{w}}_{t}^i\|.
\end{equation}

In light of \eqref{diam}, we introduce the considered model-oriented privacy notion to evaluate the privacy degree of MSP-FL. Similar privacy notions can be found in literatures \cite{l_diversity,auto_2020_diamter,chenxiaomeng_pp_tac,least_information,notionpaper3}.

\begin{definition}
  The privacy of the local model of client $i$ after $t$-th learning round is said to be insensitive to any adversary in $\mathcal{A}_h$ if $\text{Diam}(\mathcal{I}_{t}^{\mathcal{A}_h}(0,K_t))=\infty$ for any feasible $\mathcal{I}_{t}^{\mathcal{A}_h}(0,K_t)$.
\end{definition}

{The privacy notion in Definition 1 is extended from the concept of $\ell$-diversity \cite{l_diversity}. It measures the diversity of sensitive information of a selected client $i$ (i.e., the local model $\mathbf{w}_{t}^i$ as well as its associated data), which is represented by the diameter of the set $\mathcal{S}(\mathcal{I}_{t}^{\mathcal{A}_h}(0,K_t),i)$. In FL, we treat the continuous-valued model parameter vector $\mathbf{w}_t^i$, for $i\in\mathcal{S}_t$ after any learning round $t$, as the sensitive information and measure its diversity by $\text{Diam}(\mathcal{I}_{t}^{\mathcal{A}_h}(0,K_t))$. 
}

\begin{remark}
     The proposed model splitting mechanism is still in line with noise-adding approaches, wherein the added noise for assigning initial values of submodels are spatially correlated and zero-sum. A distinctive design is the subsequent multiple rounds of interaction between local submodels and the server as described in \eqref{decomposition1a} and \eqref{decomposition1b}. This process enables the asymptotic elimination of spatially correlated noise, thereby avoiding the trade-off between learning accuracy and privacy. However, regarding the dynamics of this interaction protocol, there is no stochasticity. Moreover, we do not require any specific statistic model for the noise resulting from model splitting. These features render probability-based privacy definitions, e,g., differential privacy, unsuitable for evaluating the privacy preservation provided by our proposed method, and necessitate the introduction of the privacy notion in Definition 1 which indicates the indistinguishability in a definite sense between any two adjacent model parameters $\forall i\in\mathcal{S}_t$.
\end{remark}

\begin{remark}
  Under Definition 1, an extreme situation of privacy leakage is that $\text{Diam}(\mathcal{I}_{t}^{\mathcal{A}_h}(0,K_t))=0$, implying that an adversary is able to uniquely determine a model parameter $\mathbf{w}_t^i$ via its accessible information set at learning round $t$. Such an extreme situation of privacy leakage is possible if the rules of model splitting and interactions among local submodels are known to an adversary, e.g., the hidden splitting number $m_i$ is disclosed.
\end{remark}

In the literature on privacy-preserving federated learning, there are various privacy definitions, wherein differential privacy stands as the prominent position. 
Preliminaries of standard data-oriented differential privacy and model-oriented differential privacy are given below.
\begin{definition}\label{adjacent}
      (Adjacent datasets \cite{The_algorithmic_foundations_of_differential_privacy_book}) Two datasets at client $i$ denoted as $\mathcal{D}_i^{(1)}=\{\mathcal{D}_{i,j}^{(1)}\}_{j=1}^n$ and $\mathcal{D}_i^{(2)}=\{\mathcal{D}_{i,j}^{(2)}\}_{j=1}^n$ are said to be adjacent if there exists some $j_0\in\{1,2,...,n\}$ such that $\mathcal{D}_{i,j}^{(1)}=\mathcal{D}_{i,j}^{(2)}$ for any $j\neq j_0$ and $\mathcal{D}_{i,j_0}^{(1)}\neq\mathcal{D}_{i,j_0}^{(2)}$.
\end{definition}
\begin{definition}
 (data-oriented $\epsilon_{DP}$-differential privacy \cite{The_algorithmic_foundations_of_differential_privacy_book}) Given $\epsilon_{DP}>0$, a randomized mechanism $\mathcal{M}:\mathcal{X}\rightarrow \mathcal{R}$ with domain $\mathcal{X}$ and range $\mathcal{R}$ satisfies $\epsilon_{DP}$-differential privacy, if for any observation $\mathcal{S}\subseteq\mathcal{R}$ and for any pair of adjacent datasets $\mathcal{D}_i^{(1)}$ and $\mathcal{D}_i^{(2)}\in\mathcal{X}$, it has
    \begin{equation}\label{DP}
      Pr(\mathcal{M}(\mathcal{D}_i^{(1)})\in\mathcal{S})\leq e^{\epsilon_{DP}}Pr(\mathcal{M}(\mathcal{D}_i^{(2)})\in\mathcal{S}).
    \end{equation}
\end{definition}
\begin{definition}\label{def3}
         ($\sigma$-adjacent model parameters\footnote{The concept of adjacent model parameters follows from $\sigma$-adjacency defined for state variables in dynamic systems \cite{hjp_tsp_dp}.}) Given $\sigma\in\mathbb{R}_{+}$, the model parameters ${x}$ and ${y}$ are $\sigma$-adjacent if, for some $i_0\in \{1,2,...,d\}$,
            \begin{equation*}
              |x_i - y_i| \leq 
              \begin{cases} 
                \sigma, & \text{if } i = i_0 \\
                0, & \text{otherwise}
              \end{cases}
            \end{equation*}
           for $i\in \{1,2,...,d\}$, where $x,y\in\mathbb{R}^d$.
        \end{definition}
    \begin{definition}\label{def4}
    (model-oriented $\epsilon_{DP}$-differential privacy) Given $\epsilon_{DP}>0$, a randomized mechanism $\mathcal{M}:\mathbb{R}^d\rightarrow \mathcal{R}$ with domain $\mathcal{X}$ and range $\mathcal{R}$ satisfies $\epsilon_{DP}$-differential privacy, if for any observation $\mathcal{S}\subseteq\mathcal{R}$ and for any pair of $\sigma$-adjacent model parameters $\mathbf{w}_i^{(1)}$ and $\mathbf{w}_i^{(2)}\in\mathbb{R}^d$, it has
        \begin{equation*}
          Pr(\mathcal{M}(\mathbf{w}_i^{(1)})\in\mathcal{S})\leq e^{\epsilon_{DP}}Pr(\mathcal{M}(\mathbf{w}_i^{(2)})\in\mathcal{S}).
        \end{equation*}
    \end{definition}
In what follows, we discuss the connections between the privacy notion in Definition 1 and differential privacy notion. 
\begin{remark}
  Compared to DP, Definition 1 gives only prerequisites while falling short of established privacy guarantees with probabilistic resolution. Even in earlier works that introduce this type of privacy notion, such as \cite{auto_2020_diamter,wyq}, provable privacy guarantees akin to those in DP remain absent. The intrinsic meaning of DP is that an adversary cannot distinguish between any two adjacent inputs with certain probability based on the output of a randomized mechanism. In this regard, Definition 1 is similar to DP as it also evaluates the indistinguishability of the inputs (i.e., model parameters) of a privacy-preserving mechanism when the public information accessible to the adversary is always in an identical set $\mathcal{I}_t^{\mathcal{A}_h}(0,K_t)$ (i.e., the resulting public information accessible to the adversary is always the same). Infinite diameter, i.e., $\text{Diam}({\mathcal{I}_t^{\mathcal{A}_h}}(0,K_t))=\infty$, ensures that the indistinguishability of model privacy holds for any two adjacent model parameters $\mathbf{w}_t^{i,(1)}$ and $\mathbf{w}_t^{i,(2)}$ for $i\in\mathcal{S}_t$. Hence, to some extent, the concept of infinite diameter $\text{Diam}({\mathcal{I}_t^{\mathcal{A}_h}}(0,K_t))=\infty$ in Definition 1 is similar to ``any pair of adjacent datasets" in standard data-oriented DP notion. Major differences between these two privacy notions are:
  \begin{enumerate}
    \item [1)] Different privacy objects: The input of a randomized mechanism in differential privacy is dataset $\mathcal{D}$, while the privacy notion used in this paper defines the model parameters as the direct privacy object. Many methods including model-inversion and membership-inference attacks deal with extracting sensitive information from trained models. In certain neural networks with fully-connected layers, the input data to the network can be reconstructed uniquely from the network's output \cite{geiping2020inverting}. Hence, model-oriented privacy notions like Definition 1 and Definition 5 are also of importance; 
    \item [2)] Probability of indistinguishability: In differential privacy, the outputs of a randomized mechanism over any two adjacent datasets are nearly statistically identical. While this event is in a definite sense as a condition of privacy preservation in Definition 1. Moreover, due to the ingenious design of model splitting mechanism in handling the injected noise, the privacy-utility trade-off is avoided. Therefore, there is no parameter similar to a privacy budget in this privacy notion.
  \end{enumerate}
    Based on the above discussion, we know that a sufficient relationship whereby Definition 1 implies standard data-oriented differential privacy exists. The essential requirement is the existence of a ``neighboring" mapping between the parameter solutions corresponding to different datasets (e.g., the modification of a single data sample leads to at most a change of $\Delta$ in the model parameters with probability $1$).
\end{remark}

In the sequel, we show that the privacy of any local model is protected against any honest-but-curious adversary in $\mathcal{A}_h$.

\begin{theorem}\label{theorem1}
  Under MSP-FL, for any $i\in\mathcal{S}_{t}$, if the number of invisible submodels (i.e., $m_i$) is inaccessible to $\mathcal{A}_h$, and there exists at least one legitimate client $j$ that shares mutual trust with $i$ via the trusted authority, then any adversary in $\mathcal{A}_h$ is unable to infer $\mathbf{w}_{t}^i$ with any guaranteed accuracy.
\end{theorem}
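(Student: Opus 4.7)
The goal is to establish, via Definition 1, that $\text{Diam}(\mathcal{I}_{t}^{\mathcal{A}_h}(0,K_t)) = \infty$, since this is exactly the certificate that no adversary in $\mathcal{A}_h$ can infer $\mathbf{w}_{t}^i$ with any guaranteed accuracy. My plan is to produce, for every candidate $\overline{\mathbf{w}}_{t}^i \in \mathbb{R}^d$, an alternative realization of the MSP-FL protocol that is observationally indistinguishable from the true realization to $\mathcal{A}_h$ but that has $\overline{\mathbf{w}}_{t}^i$ in place of $\mathbf{w}_{t}^i$ at client $i$.

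I would start from the identity used in the proof of Lemma 2, namely $\mathbf{z}_{t}^i[0] = \mathbf{w}_{t}^{i,\alpha}[0] + \sum_{n=1}^{m_i}\mathbf{w}_{t}^{i,\beta_n}[0] = (1+m_i)\mathbf{w}_{t}^i$, and treat the splitting count $m_i$ and the invisible initializations $\mathbf{w}_{t}^{i,\beta_n}[0]$ as free design parameters. Given an arbitrary target $\overline{\mathbf{w}}_{t}^i$, I would choose an alternative count $\overline{m}_i$ and an alternative tuple $(\overline{\mathbf{w}}_{t}^{i,\alpha}[0],\overline{\mathbf{w}}_{t}^{i,\beta_n}[0])$ satisfying $\overline{\mathbf{w}}_{t}^{i,\alpha}[0] + \sum_{n=1}^{\overline{m}_i}\overline{\mathbf{w}}_{t}^{i,\beta_n}[0] = (1+\overline{m}_i)\overline{\mathbf{w}}_{t}^i$, while keeping $\overline{\mathbf{w}}_{t}^{i,\alpha}[0] = \mathbf{w}_{t}^{i,\alpha}[0]$. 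I would then construct alternative weights $\overline{a}_{i,\alpha\beta_n}[k]$ so that the alternative visible trajectory reproduced by \eqref{decomposition1a} coincides with $\mathbf{w}_{t}^{i,\alpha}[k]$ for every $k \in \{0,\ldots,K_t\}$; inverting \eqref{decomposition1a} round by round yields a linear system in the $\overline{m}_i\cdot d$ unknown invisible-submodel initial values and $\overline{m}_i\cdot K_t$ unknown weights, and taking $\overline{m}_i$ sufficiently large guarantees enough degrees of freedom for a feasible solution.

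Next I would use the mutual-trust assumption: because at least one legitimate client $j \notin \mathcal{A}_h$ shares trust with $i$ via the trusted authority, the adversary has no access to $\mathbf{w}_{t}^{j,\beta_n}[k]$, $a_{j,\alpha\beta_n}[k]$, or $m_j$, so any residual protocol inconsistency introduced at client $i$ by the alternative construction can be absorbed into $j$'s private dynamics. Since \eqref{aggregation2} only sees the aggregate of the visible submodels, and my alternative construction leaves every $\overline{\mathbf{w}}_{t}^{l,\alpha}[k]$ equal to the true $\mathbf{w}_{t}^{l,\alpha}[k]$, the downlink $\mathbf{w}_{t}[k]$ and the server-side record $\mathcal{I}_{t}^{\mathcal{A}_s}[k]$ are unchanged, as are the curious-client records $\mathcal{I}_{t}^{j'}[k]$ for $j'\in\mathcal{A}_c$. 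Hence the alternative realization lies in $\mathcal{S}(\mathcal{I}_{t}^{\mathcal{A}_h}(0,K_t),i)$, and because $\overline{\mathbf{w}}_{t}^i$ was arbitrary, the diameter in \eqref{diam} is infinite.

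The main obstacle will be the constructive step: simultaneously forcing the alternative visible trajectory to match $\mathbf{w}_{t}^{i,\alpha}[k]$ at all $K_t{+}1$ time indices while keeping the alternative splitting-consistency condition pinned at $\overline{\mathbf{w}}_{t}^i$, despite the coupling with the downlink term $\epsilon(\mathbf{w}_{t}[k]-\mathbf{w}_{t}^{i,\alpha}[k])$. The crux is to show that the linear system arising from \eqref{decomposition1a}--\eqref{decomposition1b} is underdetermined whenever $\overline{m}_i$ is large enough, which follows once one counts variables versus constraints and exploits that the consistency conservation law \eqref{sum} fixes only the sum $\overline{\mathbf{z}}_{t}^i[k]$ modulo the observable telescoping term; the hidden $\overline{m}_i$ then supplies the extra degree of freedom needed to sweep $\overline{\mathbf{w}}_{t}^i$ over all of $\mathbb{R}^d$, from which Definition 1 delivers the claim.
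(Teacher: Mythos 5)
Your overall strategy is the same as the paper's: fix the observable record, exhibit for every perturbation of $\mathbf{w}_t^i$ an alternative realization of MSP-FL that reproduces $\mathcal{I}_t^{\mathcal{A}_h}(0,K_t)$ exactly, and conclude $\text{Diam}(\mathcal{I}_t^{\mathcal{A}_h}(0,K_t))=\infty$ via Definition 1. However, the two steps you defer are precisely where the work is, and as stated neither goes through. First, your feasibility argument -- ``invert \eqref{decomposition1a} round by round, count variables versus constraints, take $\overline{m}_i$ large'' -- does not establish that a solution exists. The invisible states at rounds $k\ge 1$ are not free unknowns: they are determined by the $k=0$ initializations and the weights through \eqref{decomposition1b}, so the matching conditions $\sum_n \overline{a}_{i,\alpha\beta_n}[k](\overline{\mathbf{w}}_t^{i,\beta_n}[k]-\mathbf{w}_t^{i,\alpha}[k])=\sum_n a_{i,\alpha\beta_n}[k](\mathbf{w}_t^{i,\beta_n}[k]-\mathbf{w}_t^{i,\alpha}[k])$ at rounds $k\ge 1$ are polynomial in the unknowns (products of unknown scalar weights with state vectors that themselves depend on earlier unknown weights), not a linear system; and even for a genuinely linear underdetermined system, more unknowns than equations does not imply consistency. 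The paper sidesteps this entirely with the explicit closed-form assignment \eqref{bar1}--\eqref{bar2}: only one invisible submodel at $i$ (and one at $j$) and only the $k=0$ weights are perturbed, and the weights are chosen so that the perturbed invisible submodel returns to its original trajectory after a single round, so no round-by-round matching problem ever arises.

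Second, and more importantly, your use of the legitimate client $j$ -- ``any residual protocol inconsistency \ldots can be absorbed into $j$'s private dynamics'' -- is not a proof step; it does not identify what the residual inconsistency is or how $j$ removes it. Yet the existence of $j$ is an explicit hypothesis of the theorem, so a correct proof must use it concretely; a construction confined to client $i$ alone would either be proving a strictly stronger statement or be missing a constraint. In the paper's argument $j$ is load-bearing in two ways: the perturbation is split antisymmetrically, $\overline{\mathbf{w}}_t^i=\mathbf{w}_t^i+\mathbf{e}$ and $\overline{\mathbf{w}}_t^j=\mathbf{w}_t^j-\mathbf{e}$, so that the relevant sums over $\mathcal{S}_t$ are conserved, and the private aggregation weights $\overline{\alpha}_i[0]$ and $\overline{\alpha}_j[0]$ (which appear in the decomposition of the downlink term $\epsilon(\mathbf{w}_t[k]-\mathbf{w}_t^{i,\alpha}[k])$ and are invisible to $\mathcal{A}_h$ precisely because $i$ and $j$ are legitimate) are co-adjusted through the trusted authority to cancel the $(1+m_i)\mathbf{e}$ terms in the visible updates. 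Until you make $j$'s compensating assignments explicit and verify that the visible trajectories of \emph{both} $i$ and $j$ (and hence $\mathbf{w}_t[k]$) are unchanged, the claim that your alternative realization lies in $\mathcal{S}(\mathcal{I}_t^{\mathcal{A}_h}(0,K_t),i)$ is unsubstantiated.
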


\begin{proof}
Denote $\{\mathbf{w}_{t}^{l,\alpha}[0],\mathbf{w}_{t}^{l,\beta_n}[0],a_{l,\alpha\beta_n}[k],\alpha_l[k]|l\in\mathcal{S}_{t},n\in\{1,...,m_l\},k\in\{0,1,...,K_t-1\}\}$ as an arbitrary set consisting of sequences of weights and initial local submodels that satisfies $\mathcal{I}_{t}^{\mathcal{A}_h}(0,K_t)$. According to the splitting rule, it has $(1+m_i)\mathbf{w}_{t}^i={\mathbf{w}_{t}^{i,\alpha}[0]+\sum_{n=1}^{m_i}\mathbf{w}_{t}^{i,\beta_n}[0]}$, $\forall i\in\mathcal{S}_t$. Meanwhile, let $\overline{\mathbf{w}}_{t}^{i}=\mathbf{w}_{t}^{i}+\mathbf{e}$ where $\mathbf{e}\in ^{*}\mathbb{R}^d$ is an arbitrary hyperreal vector.

As one of the conditions of privacy preservation, we have proved in Lemma 2 that any adversary in $\mathcal{A}_h$ is unable to infer $\mathbf{w}_{t}^{i}$ by \eqref{z} based on the current accessible information. Then, to prove the privacy preservation for any selected client $i$ against the set $\mathcal{A}_h$, it suffices to show the insensitivity or indistinguishability of information. That is, under any different local model $\mathbf{\overline{w}}_{t}^i$, there exists a set of sequences $\{\overline{\mathbf{w}}_{t}^{l,\alpha}[k_1],\overline{\mathbf{w}}_{t}^{l,\beta_n}[k_1],\overline{a}_{l,\alpha\beta_n}[k_2],\overline{\alpha}_l[k_2],|{l\in\mathcal{S}_{t}},k_1\in\{0,1,...,K_t\},k_2\in\{0,1,...,K_t-1\},n\in\{1,...,m_l\}\}$ such that $\overline{\mathbf{w}}_{t}^i\in \mathcal{S}(\mathcal{I}_{t}^{\mathcal{A}_h}(0,K_t),i)$. In other words, the information set $\mathcal{I}_{t}^{\mathcal{A}_h}(0,K_t)$ accessible to honest-but-curious adversary set  $\mathcal{A}_h$ is the same under any different $\overline{\mathbf{w}}_{t}^i$.

We show that if there exists at least one legitimate client $j\notin \mathcal{A}_h$ that cooperates with client $i$ via the trusted authority, and by properly selecting initial values of submodels and weights, especially, $\{\overline{\mathbf{w}}_{t}^{i,\alpha}[0],\overline{\mathbf{w}}_{t}^{i,\beta_n}[0],\overline{a}_{i,\alpha\beta_n}[0],\overline{\alpha}_i[0]|n\in\{1,...,m_i\}\}$
 and $\{\overline{\mathbf{w}}_{t}^{j,\alpha}[0],\overline{\mathbf{w}}_{t}^{j,\beta_n}[0],\overline{a}_{j,\alpha\beta_n}[0],\overline{\alpha}_j[0]|n\in\{1,...,m_j\}\}$, then it has $\overline{\mathbf{w}}_{t}^i\in \mathcal{S}(\mathcal{I}_{t}^{\mathcal{A}_h}(0,K_t),i)$. Specifically, initial values of local models and submodels are given by:
 \begin{small}
\begin{eqnarray}\label{bar1}
   \overline{\mathbf{w}}_{t}^{i}\hspace{-7pt}&=&\hspace{-7pt}\mathbf{w}_{t}^i+\mathbf{e}; \; \overline{\mathbf{w}}_{t}^{j}=\mathbf{w}_{t}^{j}-\mathbf{e}, \nonumber\\
    \overline{\mathbf{w}}_{t}^{i,\alpha}[0]\hspace{-7pt}&=&\hspace{-7pt}\mathbf{w}_{t}^{i,\alpha}[0];\;   \overline{\mathbf{w}}_{t}^{i,\beta_n}[0]={\mathbf{w}}_{t}^{i,\beta_n}[0], n\in\{1,...,m_i\}\backslash p, \nonumber\\
   \overline{\mathbf{w}}_{t}^{j,\alpha}[0]\hspace{-7pt}&=&\hspace{-7pt}\mathbf{w}_{t}^{j,\alpha}[0];\;   \overline{\mathbf{w}}_{t}^{j,\beta_n}[0]={\mathbf{w}}_{t}^{j,\beta_n}[0], n\in\{1,...,m_j\}\backslash q, \nonumber\\
   \overline{\mathbf{w}}_{t}^{i,\beta_p}[0]\hspace{-7pt}&=&\hspace{-7pt} (1+m_i)\overline{\mathbf{w}}_{t}^i-\overline{\mathbf{w}}_{t}^{i,\alpha}[0]-\sum_{n\in\{1,...,m_i\}\backslash p} \overline{\mathbf{w}}_{t}^{i,\beta_n}[0],\nonumber\\
   \overline{\mathbf{w}}_{t}^{j,\beta_q}[0]\hspace{-7pt}&=&\hspace{-7pt} (1+m_j)\overline{\mathbf{w}}_{t}^j-\overline{\mathbf{w}}_{t}^{j,\alpha}[0]-\sum_{n\in\{1,...,m_j\}\backslash q} \overline{\mathbf{w}}_{t}^{j,\beta_n}[0],\nonumber\\
    \overline{\mathbf{w}}_{t}^l\hspace{-7pt}&=&\hspace{-7pt}\mathbf{w}_{t}^l;\; \overline{\mathbf{w}}_{t}^{l,\alpha}[0]=\mathbf{w}_{t}^{l,\alpha}[0];\; \overline{\mathbf{w}}_{t}^{l,\beta_n}[0]=\mathbf{w}_{t}^{l,\beta_n}[0],\nonumber\\ \hspace{-7pt}&&\hspace{-7pt} \qquad\qquad\qquad\forall l\in\mathcal{S}_{t}\backslash \{i,j\}, n\in\{1,...,m_l\},
\end{eqnarray}
\end{small}%
and the settings of weights are as follows:
\begin{eqnarray*}
  \overline{a}_{i,\alpha\beta_p}[0] \hspace{-7pt}&=&\hspace{-7pt} \frac{-(1+m_i)\mathbf{e}+a_{i,\alpha\beta_p}[0](\mathbf{w}_{t}^{i,\alpha}[0]-\mathbf{w}_{t}^{i,\beta_p}[0])}{\overline{\mathbf{w}}_{t}^{i,\alpha}[0]-\overline{\mathbf{w}}_{t}^{i,\beta_p}[0]}  ,\nonumber\\
  \overline{a}_{i,\alpha\beta_n}[0] \hspace{-7pt}&=&\hspace{-7pt} {a}_{i,\alpha\beta_n}[0], \quad \forall n\in\{1,...,m_i\}\backslash p,\nonumber\\
  \overline{a}_{j,\alpha\beta_q}[0] \hspace{-7pt}&=&\hspace{-7pt} \frac{(1+m_j)\mathbf{e}+{a}_{j,\alpha\beta_q}[0](\mathbf{w}_{t}^{j,\alpha}[0]-\mathbf{w}_{t}^{j,\beta_q}[0])}{\overline{\mathbf{w}}_{t}^{j,\alpha}[0]-\overline{\mathbf{w}}_{t}^{j,\beta_q}[0]}  ,\nonumber\\
  \overline{a}_{j,\alpha\beta_n}[0] \hspace{-7pt}&=&\hspace{-7pt} {a}_{j,\alpha\beta_n}[0], \quad \forall n\in\{1,...,m_j\}\backslash q,\nonumber\\
   \overline{\alpha}_i[0]\hspace{-7pt}&=&\hspace{-7pt}\frac{(1+m_i)\mathbf{e}+{\epsilon}{\alpha_i}(\mathbf{w}_{t}^{j,\alpha}[0]-\mathbf{w}_{t}^{i,\alpha}[0])}{\epsilon(\overline{\mathbf{w}}_{t}^{j,\alpha}[0]-\overline{\mathbf{w}}_{t}^{i,\alpha}[0])},\nonumber\\
     \overline{\alpha}_j[0]\hspace{-7pt}&=&\hspace{-7pt}\frac{-(1+m_j)\mathbf{e}+{\epsilon}{\alpha_j}(\mathbf{w}_{t}^{i,\alpha}[0]-\mathbf{w}_{t}^{j,\alpha}[0])}{\epsilon (\overline{\mathbf{w}}_{t}^{i,\alpha}[0]-\overline{\mathbf{w}}_{t}^{j,\alpha}[0])},
\end{eqnarray*}
\begin{eqnarray}\label{bar2}
   \overline{a}_{i,\alpha\beta_n}[k]\hspace{-7pt}&=&\hspace{-7pt}{a}_{i,\alpha\beta_n}[k],\quad \forall n\in\{1,...,m_i\}, k\in\{1,...,K_t-1\},\nonumber\\
   \overline{a}_{j,\alpha\beta_n}[k]\hspace{-7pt}&=&\hspace{-7pt}{a}_{j,\alpha\beta_n}[k],\quad \forall n\in\{1,...,m_j\}, k\in\{1,...,K_t-1\},\nonumber\\
  \overline{a}_{l,\alpha\beta_n}[k]\hspace{-7pt}&=&\hspace{-7pt}{a}_{l,\alpha\beta_n}[k],\quad\forall l\in\mathcal{S}_{t}\backslash\{i,j\}, n\in\{1,...,m_l\},\nonumber\\
  \hspace{-7pt}&&\hspace{-7pt}\qquad\qquad\qquad k\in\{0,...,K_t-1\},\nonumber\\
  \overline{\alpha}_i[k]\hspace{-7pt}&=&\hspace{-7pt}{\alpha}_i[k]=\overline{\alpha}_j[k]={\alpha}_j[k]=\frac{1}{M},\quad k\in\{1,...,K_t-1\},\nonumber\\
   \overline{\alpha}_l[k]\hspace{-7pt}&=&\hspace{-7pt}{\alpha}_l[k]=\frac{1}{M},\,\,\,\,\forall l\in\mathcal{S}_{t}\backslash\{i,j\}, k\in\{0,...,K_t\hspace{-1pt}-\hspace{-1pt}1\}.\nonumber\\
   &&
\end{eqnarray}

Under \eqref{bar1} and \eqref{bar2}, it can be verified with some calculations that the information set accessible to $\mathcal{A}_h$ is the same as $\mathcal{I}_{t}^{\mathcal{A}_h}(0,K_t)$. Meanwhile, it implies that $\overline{\mathbf{w}}_{t}^i=\mathbf{w}_{t}^i+\mathbf{e}\in\mathcal{S}(\mathcal{I}_{t}^{\mathcal{A}_h}(0,K_t),i)$, which further yields
\begin{equation*}
\begin{aligned}
  \text{Diam}(\mathcal{I}_{t}^{\mathcal{A}_h}(0,K_t))&\geq\sup\|\overline{\mathbf{w}}_{t}^i-\mathbf{w}_{t}^i\|=\sup\limits_{\mathbf{e}\in^{*}\mathbb{R}^d}\|\mathbf{e}\|=\infty.
\end{aligned}
\end{equation*}

By Definition 1, one has that the privacy of the local model of any client $i\in\mathcal{S}_{t}$ is insensitive to any honest-but-curious adversary in $\mathcal{A}_h$, if the difference between $\mathbf{w}_{t}^i$ and $\overline{\mathbf{w}}_{t}^i$ which is denoted by $\mathbf{e}$ can be transmitted via the trusted authority to $j$, and both initial visible submodels of client $i$ and $j$ are transmitted via the trusted authority to each other (i.e., the trust between client $i$ and $j$ is mutual via the trusted authority). Equipped with Lemma 2 which indicates the nondeterminacy of current local model under MSP-FL, the insensitivity of current information set accessible to $\mathcal{A}_h$ shows that the privacy of $\mathbf{w}_{t}^i$, $\forall i\in\mathcal{S}_{t}$, is preserved.
\end{proof}

The key idea of the proof presented above can be interpreted as follows: Firstly, the first equation in \eqref{bar1} ensures that the sum of local models, denoted as $\sum_{i\in\mathcal{S}_{t}}\overline{\mathbf{w}}_{t}^i$, remains unchanged and equals to the original value $\sum_{i\in\mathcal{S}_{t}}\mathbf{w}_{t}^i$. Then, by maintaining the initial visible part invariant, any changes in local models are transferred to the initial invisible part, represented by $\overline{\mathbf{w}}_{t}^{i,\beta_p}[0]$ and $\overline{\mathbf{w}}_{t}^{j,\beta_q}[0]$. Simultaneously, the weight settings in \eqref{bar2} completely compensate for these changes from initial states. Consequently, the subsequent information set $\mathcal{I}_{t}^{\mathcal{A}_h}(0,K_t)$ accessible to $\mathcal{A}_h$ remains unchanged.
\begin{remark}
  It shall be pointed out that the coefficient of aggregation in \eqref{aggregation3} is still $1/M$, even though $\overline{\alpha}_i[0]$ and $\overline{\alpha}_j[0]$ are changed in the weight mechanism \eqref{bar2} for the local update of client $i$ and $j$, respectively.
\end{remark}
\begin{remark}
  One can observe that the difference of local model at client $i$ denoted by $\mathbf{e}$ together with $\mathbf{w}_t^{i,\alpha}[0]$ is transmitted via the trusted authority to legitimate client $j$ to reconstruct a new set $\{\overline{\mathbf{w}}_{t}^{j,\alpha}[k],\overline{\mathbf{w}}_{t}^{j,\beta_n}[k],\overline{\alpha}_j[k],\overline{a}_{j,\alpha\beta_n}[k]|n\in\{1,...,m_j\}\}$. Nonetheless, it is impossible to leakage the information of local model, since the other transmitted information between clients and the server involves only visible part.
\end{remark}

%
%
%

\subsection{The Sufficient Condition for Model Splitting Rule to Obtain Differential Privacy Noise}
One may wonder how much noise resulting from model splitting rule is equivalent to differential privacy noise when assigning initial values for submodels? For clarity of exposition, we take Laplacian mechanism to achieve $\epsilon_{DP}$-differential privacy for illustration. Let $Laplace(\mu,b)$ be a Laplace distributed noise with location $\mu$ and scale $b$.
Main result is presented in the following proposition. It shall be pointed out that, the discussed sufficient condition on the splitting rule does not conflict with the derivation of Lemma 8 (specifically, the equation \eqref{lemma8_2}), because $\mathbb{E}_{SS}\{\mathbf{w}_t^{i,\alpha}[0]\}=\mathbf{w}_t^i$ still holds under the random selection scheme for $\mathbf{w}_t^{i,\alpha}[0]$ over $Laplace(\mathbf{w}_t^i,\Delta f/\epsilon_{DP})$. Similarly, the noise $\xi_{t,a}^i$ can be Gaussian noise to achieve $(\epsilon_{DP},\delta)$-DP.

     \begin{proposition}
       For any function $f: \mathbb{D}\rightarrow \mathbb{R}^d$ where $\mathbb{D}$ is a domain of datasets $\mathcal{D}$, if the initial value of $\mathbf{w}_t^{i,\alpha}[0]$ is randomly drawn as the distribution $Laplace(\mathbf{w}_t^i,\Delta f/\epsilon_{DP})$ where $\Delta f=\max_{\mathcal{D}_i^{(1)},\mathcal{D}_i^{(2)}}\|f(\mathcal{D}_i^{(1)})-f(\mathcal{D}_i^{(2)})\|_1$, then the injected noise $\xi_{t,a}^{i}$ resulting from stochastic splitting is equivalent to the random noise injected via data-oriented $\epsilon_{DP}$-DP mechanism.
     \end{proposition}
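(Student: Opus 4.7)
The plan is to reduce the statement to a direct invocation of the classical Laplace mechanism, once the splitting noise is correctly identified with the standard DP perturbation. First I would unpack the splitting step: by construction $\mathbf{w}_t^{i,\alpha}[0]=\mathbf{w}_t^{i}+\xi_{t,a}^{i}$, so the hypothesis that $\mathbf{w}_t^{i,\alpha}[0]\sim Laplace(\mathbf{w}_t^{i},\Delta f/\epsilon_{DP})$ is exactly the statement $\xi_{t,a}^{i}\sim Laplace(\mathbf{0},\Delta f/\epsilon_{DP})$ componentwise, with the location parameter absorbed into the deterministic post-training model $\mathbf{w}_t^{i}$. Next I would identify the query $f$ whose sensitivity is being calibrated: viewing $\mathbf{w}_t^{i}$ as the output of the SGD recursion \eqref{local training} starting from the broadcast global model $\mathbf{w}_{t-1}$ and driven by $\mathcal{D}_i$, one writes $\mathbf{w}_t^{i}=f(\mathcal{D}_i)$ for a mapping $f:\mathbb{D}\to\mathbb{R}^d$ whose $\ell_1$-sensitivity is precisely $\Delta f=\max_{\mathcal{D}_i^{(1)},\mathcal{D}_i^{(2)}}\|f(\mathcal{D}_i^{(1)})-f(\mathcal{D}_i^{(2)})\|_1$ over adjacent datasets in the sense of Definition~\ref{adjacent}.

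With this identification, the release of $\mathbf{w}_t^{i,\alpha}[0]=f(\mathcal{D}_i)+\xi_{t,a}^{i}$ is exactly the Laplace mechanism applied to the query $f$. For any two adjacent $\mathcal{D}_i^{(1)},\mathcal{D}_i^{(2)}$ and any observation $s\in\mathbb{R}^d$, the ratio of densities of $\mathbf{w}_t^{i,\alpha}[0]$ given the two datasets satisfies, by the triangle inequality applied to each of the $d$ independent Laplace coordinates,
\[
\frac{p_{\mathcal{D}_i^{(1)}}(s)}{p_{\mathcal{D}_i^{(2)}}(s)} \;=\; \prod_{k=1}^{d}\exp\!\left(\tfrac{\epsilon_{DP}}{\Delta f}\bigl(|s_k-f_k(\mathcal{D}_i^{(2)})|-|s_k-f_k(\mathcal{D}_i^{(1)})|\bigr)\right) \;\leq\; \exp\!\left(\tfrac{\epsilon_{DP}}{\Delta f}\|f(\mathcal{D}_i^{(1)})-f(\mathcal{D}_i^{(2)})\|_1\right) \;\leq\; e^{\epsilon_{DP}}.
\]
Integrating this pointwise bound over any measurable $\mathcal{S}\subseteq\mathbb{R}^d$ yields $Pr(\mathcal{M}(\mathcal{D}_i^{(1)})\in\mathcal{S})\leq e^{\epsilon_{DP}} Pr(\mathcal{M}(\mathcal{D}_i^{(2)})\in\mathcal{S})$, which is precisely the $\epsilon_{DP}$-DP condition of Definition~2. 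Hence the noise $\xi_{t,a}^{i}$ produced by the splitting rule is statistically indistinguishable from a standard Laplace-mechanism perturbation, which is exactly the equivalence asserted.

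The main obstacle to make airtight is the claim of equivalence as a privacy mechanism (not merely as a distribution). One must check that releasing $\mathbf{w}_t^{i,\alpha}[0]$ is all that dataset-dependent information the splitting step emits: the number $m_i$ of invisible submodels, their initial values $\mathbf{w}_t^{i,\beta_n}[0]$, and the weights $a_{i,\alpha\beta_n}[k]$ are kept local, so by the post-processing inviolability of DP any downstream quantities derived from $\mathbf{w}_t^{i,\alpha}[0]$ inherit the $\epsilon_{DP}$-guarantee. I would also explicitly note the compatibility remark already flagged in the proposition: because $Laplace(\mathbf{w}_t^{i},\Delta f/\epsilon_{DP})$ is symmetric about $\mathbf{w}_t^{i}$, the identity $\mathbb{E}_{SS}\{\mathbf{w}_t^{i,\alpha}[0]\}=\mathbf{w}_t^{i}$ on which Lemma~8 relies still holds, so this particular choice of splitting distribution does not conflict with the unbiasedness property used in the convergence analysis. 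The extension to Gaussian noise yielding $(\epsilon_{DP},\delta)$-DP follows by replacing the Laplace density ratio calculation above with the standard Gaussian-mechanism tail argument and calibrating the scale to the $\ell_2$-sensitivity of $f$.
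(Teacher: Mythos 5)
Your proposal is correct and follows essentially the same route as the paper: identify $\xi_{t,a}^{i}=\mathbf{w}_t^{i,\alpha}[0]-\mathbf{w}_t^{i}$, observe that the hypothesized Laplace draw for $\mathbf{w}_t^{i,\alpha}[0]$ makes $\xi_{t,a}^{i}$ exactly the noise of the classical Laplace mechanism calibrated to $\Delta f/\epsilon_{DP}$, and note the compatibility with the unbiasedness $\mathbb{E}_{SS}\{\mathbf{w}_t^{i,\alpha}[0]\}=\mathbf{w}_t^{i}$ used in Lemma 8. You go further than the paper by explicitly writing out the density-ratio verification of the $\epsilon_{DP}$-DP guarantee and by identifying $f$ with the local training map, whereas the paper simply cites the Laplace mechanism's known DP property; this is added rigor rather than a different approach.
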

         \begin{proof}
            Laplace mechanism is given as follows:
            \begin{definition}
              For any function $f: \mathbb{D}\rightarrow \mathbb{R}^d$ where $\mathbb{D}$ is a domain of datasets $\mathcal{D}$, the Laplace mechanism defined as
              \begin{equation}\label{Lap}
                \mathcal{M}^{Laplace}(f(x),\epsilon_{DP})=f(x)+[\varsigma_1,...,\varsigma_d]^{\top},
              \end{equation}
              is $\epsilon_{DP}$-DP. In \eqref{Lap}, $\varsigma_i\overset{i.i.d.}{\sim}$ Laplace($0,\Delta f/\epsilon_{DP}$), i.e., its probability density function at a realization $x\in\mathbb{R}$ is given by
              \begin{equation}\label{pdf}
                Laplace(x|0,\Delta f/\epsilon_{DP})=\frac{1}{2\Delta f/\epsilon_{DP}}\exp \left( -\frac{|x|}{\Delta f/\epsilon_{DP}} \right),
              \end{equation}
              where $\Delta f\overset{\triangle}{=}\max\nolimits_{x,y\in\mathcal{D}}\|f(x)-f(y)\|_1$.\vspace{5pt}
            \end{definition}

            Recalling that the initial value of each visible submodel, i.e., $\mathbf{w}_t^{i,\alpha}[0]$, can be viewed as a noise-distorted variant of $\mathbf{w}_t^i$, hence, we rewrite $\mathbf{w}_t^{i,\alpha}[0]$ as
            \begin{equation}\label{R.7}
              \mathbf{w}_t^{i,\alpha}[0]=\mathbf{w}_t^{i}+\xi_{t,a}^i,
            \end{equation}
             wherein, the additive noise $\xi_{t,a}^i$ is determined by the splitting rule when assigning initial values for visible submodels. Obviously, the condition for the noise $\xi_{t,a}^i$ to satisfy the Laplace privacy-preserving noise as defined in Definition 4 is that
             \begin{equation}
               \mathbf{w}_t^{i,\alpha}[0]\sim Laplace(\mathbf{w}_t^i,\Delta f/\epsilon_{DP}).
             \end{equation}
            That is, the initial value of $\mathbf{w}_t^{i,\alpha}[0]$ is randomly drawn as the Laplace distribution $Laplace(\mathbf{w}_t^i,\Delta f/\epsilon_{DP})$. It shall be pointed out that, this requirement on the splitting rule does not conflict with the derivation of Lemma 8 (specifically, the equation \eqref{lemma8_2}), because $\mathbb{E}_{SS}\{\mathbf{w}_t^{i,\alpha}[0]\}=\mathbf{w}_t^i$ still holds under the random selection scheme for $\mathbf{w}_t^{i,\alpha}[0]$ over $Laplace(\mathbf{w}_t^i,\Delta f/\epsilon_{DP})$.
            
            This completes the proof.
         \end{proof}

\subsection{Convergence Analysis}
To proceed, we analyze the convergence property of MSP-FL. For ease of presentation and analysis, the number of invisible local submodels is assumed to be $m_i=1$, $\forall i\in\mathcal{S}_t$. Let $a_{\alpha\beta_1}^{\max}=\max\{a_{i,\alpha\beta_1}[k]|{i\in\mathcal{S}_{t},k\in\{0,...,K_t-1\}}\}$.

The following customary assumption is listed for analysis.
\begin{assumption}
 (Uniformly bounded local models) There exists a vector $\mathbf{w}_{\max}\in\mathbb{R}^d$ such that after multi steps of stochastic gradient descent, each local model is uniformly bounded, i.e.,
    $\|\mathbf{w}_{t}^i\|\leq \|\mathbf{w}_{\max}\|$ $\forall i\in\mathcal{N}$ and $t=1,2,...,T$.
\end{assumption}

Now we give the convergence result of MSP-FL.

\begin{theorem}
  Let $\vartheta=\max\{\frac{8L}{\mu},E\}-1$, $\epsilon\in(0,\frac{M}{M-1})$, $K_t=\lceil\log_\lambda^{2/\mu(\vartheta+t)}\rceil$ and ${ {a_{\alpha\beta_1}^{\max}}\in (0, \frac{\lambda_{\min,\mathbf{U}}}{1+\lambda_{\min,\mathbf{U}}})}$ where $\mathbf{U}$ is defined in \eqref{notation4}. Choose the diminishing learning rate as $\eta_t=\frac{2}{\mu(\vartheta+t)}$. Suppose that Assumptions 1-2 hold, then MSP-FL has the following convergence property:\par
  \vspace{-10pt}
  \begin{small}
  \begin{eqnarray}\label{theorem3}
   \hspace{-10pt}&&\hspace{-9pt} \mathbb{E}\left\{F(\overline{\mathbf{w}}_{t})\right\}-F^* \hspace{-1pt}\leq \hspace{-1pt} \frac{8L}{\mu(\vartheta\hspace{-1pt}+\hspace{-2pt}t)}\left(\hspace{-1pt}\frac{2D_2}{\mu}\hspace{-1pt}+\hspace{-1pt}\frac{8L\hspace{-2pt}+\hspace{-2pt}\mu E}{2}\mathbb{E}\left\{\|\overline{\mathbf{w}}_0\hspace{-1pt}-\hspace{-1pt}\mathbf{w}^*\|^2\right\}\hspace{-2pt}\right)\nonumber\\
    \hspace{-10pt}&&\hspace{-9pt}
  \end{eqnarray}
  \end{small}%
where $D_2=\frac{(2\epsilon^2-4\epsilon+8)}{3}C^2\|\mathbf{w}_{\max}\|^2+\sum_{i=1}^{N}p_i^2\sigma_i+6L\Gamma+8(E-1)^2G+\frac{4}{M}E^2G$.
\end{theorem}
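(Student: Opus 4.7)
My plan is to first invoke $L$-smoothness together with $\nabla F(\mathbf{w}^*)=0$ so that $F(\overline{\mathbf{w}}_t)-F^* \leq \tfrac{L}{2}\|\overline{\mathbf{w}}_t-\mathbf{w}^*\|^2$, reducing the claim to an $O(1/t)$ bound on $r_t := \mathbb{E}\{\|\overline{\mathbf{w}}_t-\mathbf{w}^*\|^2\}$. I then aim to establish a one-step recursion of the form $r_t \leq (1-\mu\eta_t)\,r_{t-1} + \eta_t^2 D_2$ and close the argument by the standard strongly-convex induction lemma: for $\eta_t=2/(\mu(\vartheta+t))$ and $\vartheta\geq 8L/\mu - 1$, one gets $r_t \leq \max\{4D_2/\mu^2,(\vartheta+1)r_0\}/(\vartheta+t)$, and multiplying by $L/2$ produces \eqref{theorem3}.

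To build the recursion I would introduce the virtual full aggregation $\tilde{\mathbf{w}}_t = \sum_{i=1}^N p_i \mathbf{w}_t^i$ and perform the decomposition $\overline{\mathbf{w}}_t-\mathbf{w}^* = (\overline{\mathbf{w}}_t-\tilde{\mathbf{w}}_t) + (\tilde{\mathbf{w}}_t-\mathbf{w}^*)$, so that by Lemma~1 (unbiasedness of the partial aggregation) the cross term drops in expectation. This splits $r_t$ into a partial-participation plus model-splitting residual on one side, and a clean FedAvg-style virtual-sequence error on the other.

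For the first piece, I treat \eqref{decomposition1a}--\eqref{decomposition1b} as a linear consensus iteration on the stacked vector of visible and invisible submodels across $i\in\mathcal{S}_t$. The conservation identity \eqref{sum} shows the weighted sum is invariant, and the choices $\epsilon\in(0,M/(M-1))$ together with $a_{\alpha\beta_1}^{\max}\in(0,\lambda_{\min,\mathbf{U}}/(1+\lambda_{\min,\mathbf{U}}))$ should make the associated iteration matrix strictly contractive toward the average with some geometric rate $\lambda<1$ given by its second-largest eigenvalue. This yields $\|\mathbf{w}_t^{i,\alpha}[K_t]-\mathbf{w}_t^i\|^2 \leq \lambda^{2K_t}R_0^2$, where the initial dispersion $R_0^2$ is controlled, via Assumption~2 and the splitting factor $\varepsilon$, by a quantity of the form $\tfrac{(2\epsilon^2-4\epsilon+8)}{3}C^2\|\mathbf{w}_{\max}\|^2$. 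Plugging in $K_t=\lceil\log_\lambda(2/(\mu(\vartheta+t)))\rceil$ forces this residual to be $O(\eta_t^2)$, matching the SGD noise scale, and bounding the partial-participation variance under sampling with replacement contributes the $\tfrac{4}{M}E^2 G$ term. For the second piece I would follow the standard non-i.i.d. FedAvg argument in the spirit of \cite{niid}: unroll the $E$ local SGD steps; use $\mu$-strong convexity to obtain the $(1-\mu\eta_t)$ contraction; invoke $L$-smoothness with the bounded-variance assumption to introduce $\sum_i p_i^2\sigma_i$ and $6L\Gamma$; and use Assumption~1.4) to introduce the local-drift term $8(E-1)^2 G$. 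Summing the four sources gives exactly the constant $D_2$ in the statement.

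The hard part will be the spectral step: the joint iteration on visible and invisible submodels couples every sampled client through the aggregation term $\epsilon(\mathbf{w}_t[k]-\mathbf{w}_t^{i,\alpha}[k])$, so the relevant iteration matrix is not the doubly-stochastic one from classical average consensus. Verifying that the stated threshold on $a_{\alpha\beta_1}^{\max}$ relative to $\lambda_{\min,\mathbf{U}}$ actually enforces a strict, uniform-in-$t$ geometric rate $\lambda<1$, and that this rate coincides with the one implicitly used in the choice of $K_t$, is where the real technical work lies. Once that consensus rate is in hand, the rest is careful bookkeeping: assembling the four error sources into $D_2$, composing with the virtual-sequence contraction to get the promised one-step recursion, and applying the standard $O(1/t)$ induction.
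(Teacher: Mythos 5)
Your proposal follows essentially the same route as the paper's proof in Appendix A: reduce the claim to an $\mathcal{O}(1/t)$ bound on $\mathbb{E}\{\|\overline{\mathbf{w}}_t-\mathbf{w}^*\|^2\}$ via $L$-smoothness, build the one-step recursion with constant $D_2$ by separating the SGD/heterogeneity error (handled exactly as in the non-i.i.d.\ FedAvg analysis of \cite{niid}), the partial-participation variance, and the model-splitting residual, kill the latter by choosing $K_t=\lceil\log_\lambda^{\eta_t}\rceil$ so that the geometric consensus error is $\mathcal{O}(\eta_t^2)$, and close with the standard strongly-convex induction. One step of your sketch needs correcting, though: the iteration \eqref{decomposition1} does not drive $\mathbf{w}_t^{i,\alpha}[K_t]$ toward the client's own model $\mathbf{w}_t^i$ --- because of the coupling term $\epsilon(\mathbf{w}_t[k]-\mathbf{w}_t^{i,\alpha}[k])$, every visible submodel converges to the common average $\frac{1}{M}\sum_{j\in\mathcal{S}_t}\mathbf{w}_t^j$, so your claimed bound $\|\mathbf{w}_t^{i,\alpha}[K_t]-\mathbf{w}_t^i\|^2\leq\lambda^{2K_t}R_0^2$ is false in general. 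What the argument actually needs (and what the paper's Lemma 8 proves via the product-matrix estimate $\left|[\mathbf{\Phi}(k,0)]_{ij}-\frac{1}{2M}\right|\leq C\lambda^{k+1}$ of Lemma 7) is the aggregate statement $\mathbb{E}\{\|\frac{1}{M}\sum_{i\in\mathcal{S}_t}\mathbf{w}_t^{i,\alpha}[K_t]-\frac{1}{M}\sum_{i\in\mathcal{S}_t}\mathbf{w}_t^i\|^2\}\leq\eta_t^2 D_1$, together with the unbiasedness of the random splitting (not only of the client sampling) so that the cross term in your decomposition vanishes in expectation. With those two repairs your outline matches the paper's proof.
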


\begin{proof}
  The proof is given in Appendix A.
\end{proof}

In Theorem 2, we derive a bound on the objective value of the model learned via MSP-FL and that of the optimal model. The behavior of the bound in \eqref{theorem3} implies the asymptotic convergence of MSP-FL with a rate of $\mathcal{O}(1/t)$. That is, as the number of learning rounds $t$ progresses, the learned model converges to the optimal one with a difference decaying as $1/t$. The convergence rate is at the same order of FedAvg in \cite{fedavg} and FL with privacy and communication efficiency requirements in \cite{NIPS2022_SoteriaFL:A-Unified-Framework-for-Private-Federated,joint}, and it is better than the $\mathcal{O}(\log(t)/\sqrt{t})$ rate observed in \cite{Shuffled-Model-of-Differential-Privacy-in-Federated-Learning_2021_pmlr_Girgis}. One can observe from \eqref{theorem3} that, for privacy enhancement considerations, the model splitting procedure indeed yields an additive term in $D_2$. Specifically, $D_2$ contains several additive terms, wherein, the first term $\frac{(2\epsilon^2-4\epsilon+8)}{3}C^2\|\mathbf{w}_{\max}\|^2$ is a derivative part resulting from privacy enhancement via model splitting mechanism, which is dictated by the splitting factor $\varepsilon$ and the upper bound of model parameters. It is worth noting that $\varepsilon\in[0,\frac{1+m_i}{2})$, and the splitting factor $\varepsilon$ affects the selection of initial values of the visible submodels after local training process of each learning round.
Therewith, it implies that, on one hand, a smaller splitting factor brings about a smaller impact of model splitting mechanism on the learning profile if the number of iterations in \eqref{decomposition1} is finite and fixed. On the other hand, a smaller splitting factor means that the initial random values selected for visible submodels are closer to the true model obtained at learning round $t$. $\sum_{i=1}^{N}p_i^2\sigma_i$, $6L\Gamma$, $8(E-1)^2G$ and $\frac{4}{M}E^2G$ in $D_2$ reveal the impacts of stochastic gradients, heterogeneity of data distribution and model parameter, and partial clients participation on the learning profile, respectively. Detailed analysis about these terms can be found in the earlier research \cite{niid}. To further mitigate the impact of these heterogeneities, e.g., `client-drift', please refer to methods including SCAFFOLD in \cite{SCAFFOLD}.


\section{Model-Splitting Privacy-preserving with Dynamic Quantization Federated Learning}
This section considers both privacy enhancement and quantization in FL, and proposes a scheme named MSPDQ-FL. Compared to vanilla FedAvg, MSP-FL results in totally $\frac{\sum_{t=1}^{T} (K_t+1)}{T}$ times communication burden for privacy enhancement. To tackle this challenge, we utilize tools from quantization theory to reduce the usage of communication bandwidth. Simultaneously, the distortion induced by the compression of transmitted submodel is enforced to vanish as the dynamic quantization interval shrinks. Note that the dynamic quantization mechanism can be independently designed in such a joint scheme. The reason is that MSP-FL actually ensures exact convergence guarantee in expectation as long as the communication round $K_t$ after each learning round $t$ is larger than a dynamic lower bound, whereas, the communication round is independent of the design for quantization. The details of MSPDQ-FL are presented below.
\subsection{The Proposed Scheme: MSPDQ-FL}
For the split submodel $\mathbf{w}_{t}^{i,\alpha}[k]$ of the $i$-th client during the $k$-th communication round, it is assumed that any element of $\mathbf{w}_{t}^{i,\alpha}[k]$ is bounded in a closed interval, i.e., $w_{t}^{i,j,\alpha}[k]\in [{w}_{\min}^{i,\alpha},{w}_{\max}^{i,\alpha}]$, $\forall j\in \{1,2,...,d\}$, $\forall
k$. Denote the quantization interval as $\mathcal{R}_i\triangleq [{w}_{\min}^{i,\alpha}\mathbf{1},{w}_{\max}^{i,\alpha}\mathbf{1}]$. Given a quantization level ${l} $ with the quantization bits $B=\lceil\log_{2}(l)\rceil$, the knobs uniformly spaced in the interval $\mathcal{R}_i$ are denoted as $\{c_0,c_1,...,c_{l-1}\}$, where
$c_\tau=w_{\min}^{i,\alpha}+\tau \frac{w_{\max}^{i,\alpha}-w_{\min}^{i,\alpha}}{l-1}, \forall \tau=0,1,...,l-1$.

Suppose that the $j$-th element of submodel $\mathbf{w}_{t}^{i,\alpha}[k]$ is located in $[c_\tau,c_{\tau+1})$. Then, $w_{t}^{i,j,\alpha}[k]$ is quantized by the following stochastic quantizer:
\begin{equation}\label{sq}
    \begin{aligned}
       \hspace{-5.5pt} \mathcal{Q}(w_{t}^{i,j,\alpha}[k])=\left\{
          \begin{aligned}
            &\text{sign}(w_{t}^{i,j,\alpha}[k]) c_\tau, \quad\text{w.p.}\frac{c_{\tau+1}-|w_{t}^{i,j,\alpha}[k]|}{c_{\tau+1}-c_\tau} \\
            &\text{sign}(w_{t}^{i,j,\alpha}[k]) c_{\tau+1},  \text{w.p.} \frac{|w_{t}^{i,j,\alpha}[k]|-c_\tau}{c_{\tau+1}-c_\tau}
          \end{aligned}\right.
    \end{aligned}
\end{equation}
where, for brevity, we denote $q_{t}^{i,j}[k]= \mathcal{Q}(w_{t}^{i,j,\alpha}[k])$ and $\mathbf{q}_t^{i}[k]=[{q}_t^{i,1}[k],{q}_t^{i,2}[k],...,{q}_t^{i,d}[k]]^{\top}$. ‘w.p.’ represents ‘with probability’. Additionally, one has the quantization error given by
\begin{equation}\label{qe}
  \|\mathbf{\Delta}_t^{i}[k]\|=\|\mathbf{q}_t^{i}[k]-\mathbf{w}_t^{i,\alpha}[k]\|\leq\frac{\sqrt{d}(w_{\max}^{i,\alpha}-w_{\min}^{i,\alpha})}{l-1}.
\end{equation}

It can be observed from \eqref{qe} that the quantization error is directly relevant to the magnitude of the quantization interval $[w_{\min}^{i,\alpha},w_{\max}^{i,\alpha}]$, when the quantized level $l$ is finite and fixed. In this work, our motivation that follows with \cite{adapq} is to adaptively refine the quantization interval for such a stochastic quantizer as a dynamic one at each communication round, i.e., $\mathcal{R}_t^i[k]$. The magnitude of $\mathcal{R}_t^i[k]$ is designed to shrink as $k$ goes. Specifically, given the current time instant as $k$, the quantization interval for the next communication round is determined by the current information as
\begin{eqnarray}\label{interval}
 \hspace{-13pt} \mathcal{R}_{t}^i[k+1]\hspace{-9pt}&=&\hspace{-9pt}\left[\mathbf{q}_{t}^i[k]\hspace{-1pt}-\hspace{-1pt}\frac{\pi_t a_{\alpha\beta_1}^{\max}[k]}{2}\mathbf{1},\mathbf{q}_{t}^i[k]\hspace{-1pt}+\hspace{-1pt}\frac{\pi_t a_{\alpha\beta_1}^{\max}[k]}{2}\mathbf{1}\right],
\end{eqnarray}
where $\pi_t$ is a parameter to be designed that governs the interval bounds during the communication rounds $k\in\{0,...,K_t\}$. $a_{\alpha\beta_1}^{\max}[k]\overset{\triangle}{=}\max\{a_{i,\alpha\beta_1}[k]|i\in\mathcal{S}_t\}$ mainly controls the rate at which the dynamic quantization interval shrinks. As such, the variance of quantization error decays asymptotically as the magnitude of quantization interval shrinks. We denote such stochastic dynamic quantizer as $\mathcal{Q}_{k}^i(\cdot)$.

Equipped with the aforementioned stochastic dynamic quantizer for model aggregation, the updating rule for submodels in MSPDQ-FL is written as follows:
\begin{subequations}\label{decomposition2}
\begin{eqnarray}
      \hspace{-12pt}\mathbf{w}_{t}^{i,\alpha}[k+1] \hspace{-7pt}&=&\hspace{-7pt} \mathbf{w}_{t}^{i,\alpha}[k]+\epsilon(\mathbf{w}_{t}[k]-\mathbf{q}_{t}^{i}[k])\nonumber\\
     &&\hspace{-7pt}+\sum_{n=1}^{m_i}a_{i,\alpha\beta_n}[k]\left(\mathbf{w}_{t}^{i,\beta_n}[k]-\mathbf{w}_{t}^{i,\alpha}[k]\right),\label{decomposition2a}\\
    \hspace{-12pt} \mathbf{w}_{t}^{i,\beta_n}[k+1] \hspace{-7pt}&=& \hspace{-7pt} \mathbf{w}_{t}^{i,\beta_n}[k]+a_{i,\alpha\beta_n}[k](\mathbf{w}_{t}^{i,\alpha}[k]-\mathbf{w}_{t}^{i,\beta_n}[k]),\nonumber\\
     \hspace{-7pt}&&\hspace{-7pt}\qquad\qquad\qquad\qquad n=1,2,...,m_i, \label{decomposition2b}
\end{eqnarray}
\end{subequations}
where the aggregated model at the server is given by
\begin{eqnarray}\label{aggregation4}
   \mathbf{w}_{t}[k+1]\hspace{-7pt}&=&\hspace{-7pt}\sum_{i\in S_{t}}\frac{1}{M}\mathbf{q}_{t}^{i}[k+1].
\end{eqnarray}
Detailed procedure of MSPDQ-FL is shown in Algorithm 1.
\begin{algorithm}[t]
  \caption{MSPDQ-FL}
  \label{DPPD}
  \begin{algorithmic}[1]
    \State
    Initialize global model $\mathbf{w}_0$, $\epsilon\in(0,\frac{M}{M-1})$, $\tilde{\epsilon}=\max\{1,\epsilon\}$, $\pi_{t}=\frac{8(\epsilon+\tilde{\epsilon}+\tilde{\epsilon}\widetilde{W}_{t,k}^{\max})}{1-\lambda_{2,\mathbf{U}}}$ and $a_{i,\alpha\beta_n}[k]=\frac{\gamma_i}{k+1}$ $\forall n\in\{1,2,...,m_i\}$ and $\gamma_i \in (0, \frac{\lambda_{\min,\mathbf{U}}}{1+\lambda_{\min,\mathbf{U}}})$;
    \For{$t=1,2,...,T$}
        \State Server selects $M$ clients randomly with replacement \Statex\quad\, by
       probabilities $\{p_1,p_2,...,p_N\}$, denoted by $\mathcal{S}_{t}$;
        \State Server broadcasts $\mathbf{w}_{t-1}$;
    \For {client $i\in \mathcal{S}_{t}$} \textbf{in parallel}
        \State $\mathbf{w}_{t,0}^i\leftarrow \mathbf{w}_{t-1}$;\\
     \Statex$\qquad \quad K_t=\max\left\{\left\lceil\log_{\lambda}^{\frac{2}{\mu(\vartheta+t)}}\right\rceil, \left\lceil\frac{\mu(\vartheta+t)}{2}\right\rceil \right\}$;
            \State Client updates local model as in \eqref{decomposition2};
    \EndFor  \State\textbf{end for}
            \State Each client $i$ sets $\mathbf{w}_{t}^i\leftarrow\mathbf{w}_{t,E}^i$;
            \State Each client $i$ randomly splits $\mathbf{w}_{t}^{i,\alpha}[0]$ from a uniform
            \Statex \quad\, distribution on the interval $[\epsilon\mathbf{w}_{t}^i,(1+m_i-\epsilon)\mathbf{w}_{t}^i]$ with
            \Statex \quad\, $\mathbf{w}_{t}^{i,\alpha}[0]+\sum_{n=1}^{m_i}\mathbf{w}_{t}^{i,\beta_n}[0]=(1+m_i)\mathbf{w}_{t}^{i}[0]$, and
            \Statex \quad \,
             $\mathbf{w}_{t}^{i,\alpha}[0]=\mathbf{w}_{t}^{j,\alpha}[0]$, $\forall \{i,j\}\in\mathcal{S}_{t}$;
        \State Each client $i\in\mathcal{S}_{t}$ initializes
             \begin{itemize}
          \item Uniformly Divide $[w_{\min}^{i,\alpha}\mathbf{1},w_{\max}^{i,\alpha}\mathbf{1}]$ into $l^d$ rectangular bins as $w_{\min}^{i,\alpha}\mathbf{1}={\tau}_1\leq\cdots\leq\tau_{l^d}\leq w_{\max}^{i,\alpha}\mathbf{1}$;
          \item Let $\mathbf{w}_{t}^{i,\alpha}[0]=\mathbf{q}_{t}^i[0]=\tau_m$ for some $m\in[0,l^d]$. Compute $\mathbf{q}_{t}^{i|b}[0]$ using the quantization scheme $\mathcal{Q}_0(\mathbf{w}_{t}^{i,\alpha}[0])$ over $[w_{\min}^{i,\alpha}\mathbf{1},w_{\max}^{i,\alpha}\mathbf{1}]$;
          \item A sequence of positive and non-increasing step sizes $\{a_{i,\alpha\beta_n}[k]|n=1,2,...,m_i\}$;
        \end{itemize}
        \State Each client $i\in\mathcal{S}_{t}$ uploads $\mathbf{q}_{t}^{i|b}[0]$;
        \State Server receives $\mathbf{q}_{t}^{i|b}[0]$ from clients, recovers $\mathbf{q}_{t}^i[0]$
        \Statex \;\;\,\,\,\, from $\mathbf{q}_{t}^{i|b}[0]$ over $\mathcal{R}_{t}^i[0]=[w_{\min}^{i,\alpha}\mathbf{1},w_{\max}^{i,\alpha}\mathbf{1}]$, performs
        \Statex \;\,\,\,\,\, aggregation as \eqref{aggregation4} and broadcasts $\mathbf{w}_{t}[0]$;
        \For {$k=0,1,...,K_t-1$}
            \For {client $i\in \mathcal{S}_{t}$} \textbf{in parallel}
                \State Client $i$ executes \eqref{decomposition2} to update $\mathbf{w}_{t}^{i,\alpha}[k+1]$;
                \State Client $i$ computes $\mathbf{q}_{t}^{i}[k+1]$ and $\mathbf{q}_{t}^{i|b}[k+1]$ by
                \Statex \qquad \qquad\; $\mathcal{Q}_{k+1}^i(\cdot)$ over \eqref{interval};
                \State Client uploads $\mathbf{q}_{t}^{i|b}[k+1]$;
            \EndFor \State\textbf{end for}
            \State Server receives $\mathbf{q}_{t}^{i|b}[k+1]$ from clients, recovers \Statex \qquad\quad$\mathbf{q}_{t}^i[k+1]$ over $\mathcal{R}_{t+1}^i(k+1)$, performs aggregation
            \Statex \qquad\quad   as \eqref{aggregation4} and broadcasts $\mathbf{w}_{t}[k+1]$;
        \EndFor \State\textbf{end for}
        \State Server finds $\mathbf{w}_{t}\leftarrow\mathbf{w}_{t}[K_t]$;
    \EndFor  \State \textbf{end for}
  \end{algorithmic}
\end{algorithm}

For the execution of MSPDQ-FL, we give the following initial settings: 1) $\mathbf{w}_{t}^{i,\alpha}[0]=\mathbf{q}_{t}^i[0]=c_{\tau}$ for some arbitrarily index $\tau\in\{0,...,l-1\}$; 2) Each initially split visible part submodel is randomly generated from a uniform distribution on the interval $[\varepsilon \mathbf{w}_{t}^i,(1+m_i-\varepsilon) \mathbf{w}_{t}^i]$ satisfying $\mathbf{w}_{t}^{i,\alpha}[0]+\sum_{n=1}^{m_i}\mathbf{w}_{t}^{i,\beta_n}[0]=(1+m_i)\mathbf{w}_{t}^{i}$ for all $i\in\mathcal{S}_t$ and $t=1,2,...,T$. Moreover, it requires that $\mathbf{w}_{t}^{i,\alpha}[0]=\mathbf{w}_{t}^{j,\alpha}[0]$, $\forall \{i,j\}\in\mathcal{S}_{t}$.\footnote{The condition $\mathbf{w}_{t}^{i,\alpha}[0]=\mathbf{w}_{t}^{j,\alpha}[0]$, $\forall \{i,j\}\in\mathcal{S}_{t}$, can be achieved by the server to broadcast any uploaded $\mathbf{w}_{t}^{i,\alpha}[0]$ after each learning round.}

\begin{remark}
    The intuition of such a design for shrinking quantization interval is that, as the local updating rule in \eqref{decomposition2} proceeds after $t$-th learning round, client $i$ becomes more certain about where the current submodel $\mathbf{w}_{t}^{i,\alpha}[k]$ is and the subsequent submodel $\mathbf{w}_{t}^{i,\alpha}[k+1]$ will be on the whole quantization interval. Based on the current information, it determines a shrinking quantization interval $\mathcal{R}_{t}^i[k+1]$ for the next communication round.
\end{remark}

\subsection{Privacy and Convergence Analysis}
In this subsection, we present the analysis of MSPDQ-FL for its privacy preservation and convergence performance.

\begin{proposition}
  Under MSPDQ-FL, for any $i\in\mathcal{S}_{t}$, any adversary in $\mathcal{A}_h$ is unable to infer $\mathbf{w}_{t}^i$ in the sense of expectation with any guaranteed accuracy, if the following conditions are satisfied: 1. At least one legitimate client $j$ shares mutual trust with $i$ via the trusted authority;
 2. The number of invisible submodels $m_i$ is inaccessible to adversaries in the set $\mathcal{A}_h$.
\end{proposition}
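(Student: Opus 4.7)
The plan is to mirror the two-part structure of the proof of Theorem 1, adapting each piece to accommodate the stochastic dynamic quantizer, and to replace the definite-sense insensitivity by an expectation-sense version consistent with the statement. First, I would reestablish the nondeterminacy analog of Lemma 2 for MSPDQ-FL by introducing the conserved quantity $\mathbf{z}_{t}^{i}[k]:=\mathbf{w}_{t}^{i,\alpha}[k]+\sum_{n=1}^{m_i}\mathbf{w}_{t}^{i,\beta_n}[k]$. Using \eqref{decomposition2a} and \eqref{decomposition2b} and the unbiasedness $\mathbb{E}\{\mathbf{q}_{t}^{i}[k]\}=\mathbf{w}_{t}^{i,\alpha}[k]$ of the stochastic quantizer in \eqref{sq}, the cross terms from the weights $a_{i,\alpha\beta_n}[k]$ cancel and one obtains $\mathbb{E}\{\mathbf{z}_{t}^{i}[k+1]\}=\mathbb{E}\{\mathbf{z}_{t}^{i}[k]\}+\epsilon(\mathbf{w}_{t}[k]-\mathbf{w}_{t}^{i,\alpha}[k])$. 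Telescoping and rearranging as in Lemma 2 shows that any adversary in $\mathcal{A}_h$ cannot, in expectation, recover $\mathbf{z}_{t}^{i}[0]=(1+m_i)\mathbf{w}_{t}^{i}$ without knowing $m_i$, hence cannot uniquely infer $\mathbf{w}_{t}^{i}$ even in expectation.

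Second, I would construct an indistinguishable-in-expectation alternative trajectory exactly as in Theorem 1, exploiting the mutually-trusted legitimate client $j$. Pick an arbitrary hyperreal $\mathbf{e}$, set $\overline{\mathbf{w}}_{t}^{i}=\mathbf{w}_{t}^{i}+\mathbf{e}$ and $\overline{\mathbf{w}}_{t}^{j}=\mathbf{w}_{t}^{j}-\mathbf{e}$, keep $\overline{\mathbf{w}}_{t}^{l,\alpha}[0]=\mathbf{w}_{t}^{l,\alpha}[0]$ for every $l\in\mathcal{S}_{t}$, and absorb the perturbation into the invisible submodels $\overline{\mathbf{w}}_{t}^{i,\beta_p}[0]$ and $\overline{\mathbf{w}}_{t}^{j,\beta_q}[0]$ exactly as in \eqref{bar1}. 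Next, reassign the first-round weights $\overline{a}_{i,\alpha\beta_p}[0]$, $\overline{a}_{j,\alpha\beta_q}[0]$, $\overline{\alpha}_{i}[0]$, and $\overline{\alpha}_{j}[0]$ using the same compensating formulas as in \eqref{bar2}, leaving all other weights and initial submodels untouched. Because the initial visible submodels coincide with those of the original trajectory, the very first quantizer $\mathcal{Q}_{0}^{i}(\cdot)$ operates over the identical interval $[w_{\min}^{i,\alpha}\mathbf{1},w_{\max}^{i,\alpha}\mathbf{1}]$ and yields the same distribution of $\mathbf{q}_{t}^{i|b}[0]$, so the initial adversary view is unchanged in expectation.

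Third, I would propagate the match inductively through the $K_t$ subsequent communication rounds. By construction, the weight compensation ensures $\mathbb{E}\{\overline{\mathbf{w}}_{t}^{l,\alpha}[1]\}=\mathbb{E}\{\mathbf{w}_{t}^{l,\alpha}[1]\}$ for every $l\in\mathcal{S}_{t}$. Because the dynamic interval $\mathcal{R}_{t}^{i}[k+1]$ in \eqref{interval} is a deterministic function of $\mathbf{q}_{t}^{i}[k]$ and $a_{\alpha\beta_1}^{\max}[k]$, equal expected quantizer outputs at step $k$ yield equal expected intervals at step $k+1$, and unbiasedness of the dithered quantizer then carries $\mathbb{E}\{\overline{\mathbf{q}}_{t}^{i}[k+1]\}=\mathbb{E}\{\mathbf{q}_{t}^{i}[k+1]\}$ forward. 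Combining this insensitivity of the accessible set $\mathbb{E}\{\mathcal{I}_{t}^{\mathcal{A}_h}(0,K_t)\}$ with the nondeterminacy established in step one yields $\text{Diam}(\mathbb{E}\{\mathcal{I}_{t}^{\mathcal{A}_h}(0,K_t)\})=\infty$, which by Definition 1 (read in the expectation sense) proves the claim.

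The main obstacle is the coupling introduced by the dynamic quantization interval: because $\mathcal{R}_{t}^{i}[k+1]$ depends on the random quantized value $\mathbf{q}_{t}^{i}[k]$, the induction needed to carry indistinguishability forward is more delicate than in the noiseless MSP-FL proof of Theorem 1, and it hinges critically on the unbiasedness of the stochastic quantizer in \eqref{sq}. A secondary subtlety is that all compensation must be absorbed by the initial-round weights (for $k=0$), since the subsequent weights reduce to the uniform $1/M$ in \eqref{bar2}; this is possible exactly because the weight-compensation machinery introduced in Theorem 1 requires only a one-step fix, and that fix survives quantization in expectation.
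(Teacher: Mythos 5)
Your proposal is correct and takes essentially the same approach as the paper: the paper's own proof simply observes that the quantizer $\mathcal{Q}_k^i(\cdot)$ is unbiased (Lemma 11) and then invokes the derivation of Theorem 1 in the expectation sense, which is precisely the argument you spell out. Your expansion — the conserved-quantity nondeterminacy step, the compensated alternative trajectory from \eqref{bar1}--\eqref{bar2}, and the inductive propagation through the shrinking intervals — is a faithful (and more detailed) elaboration of what the paper leaves implicit.
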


\begin{proof}
  Note that the quantizer $\mathcal{Q}_{k}^i(\cdot)$ is unbiased as stated in Lemma 11, hence, the proof directly follows from the line of derivation in the proof of Theorem 1.
\end{proof}

For ease of analysis, an additional assumption is listed below, and the number of invisible submodels of each selected client is assigned to be $m_i=1$ $\forall i\in\mathcal{S}_t$, similarly as in Section \uppercase\expandafter {\romannumeral3}.C.
\begin{assumption}
  (Adapted from Assumption 2 in \cite{adapq}) Let $\pi_{t}=\frac{8(\epsilon+\tilde{\epsilon}+\tilde{\epsilon}\widetilde{W}_{t,k}^{\max})}{1-\lambda_{2,\mathbf{U}}}$ where $\tilde{\epsilon}=\max\{1,\epsilon\}$ and ${\widetilde{W}_{t,k}^{\max}}=\max\{\|\mathbf{W}_{t}^\alpha[k_1]-\mathbf{W}_{t}^{\beta_1}[k_1]\||k_1\in\{0,...,k\}\}$. The number of bits $B$ of the communication bandwidth satisfies
  \begin{equation}\label{assumption7}
    B\leq\log_2^{\sqrt{Md}\pi_{t}+1}, \quad \forall t=1,2,...,T.
  \end{equation}
\end{assumption}

Our main result for the convergence analysis of MSPDQ-FL is stated as follows.
\begin{theorem}
Let $\vartheta$, $\epsilon$ and $\eta_t$ be defined in Theorem 2. In addition, let $K_t=\max\big\{\lceil\log_{\lambda}^{2/\mu(\vartheta+t)}\rceil, \lceil\frac{\mu(\vartheta+t)}{2}\rceil \big\}$ and the diminishing step size $a_{i,\alpha\beta_1}[k]$ for all $i\in\mathcal{S}_{t}$ be designed as $\frac{\gamma_i}{k+1}$, where ${\gamma_i \in (0, \frac{\lambda_{\min,\mathbf{U}}}{1+\lambda_{\min,\mathbf{U}}})}$. If Assumptions 1-3 hold, then the convergence property of MSPDQ-FL with fixed quantization level $l$ satisfies\par
\vspace{-11pt}
\begin{small}
  \begin{eqnarray}\label{theorem4jieguo}
   \hspace{-23pt}&&\hspace{-10pt} \mathbb{E}\left\{F(\overline{\mathbf{w}}_{t})\right\}-F^* \nonumber\\
    \hspace{-23pt}&&\hspace{-10pt} \leq \frac{8L}{\mu(\vartheta\hspace{-1pt}+\hspace{-1pt}t)}\hspace{-1pt}\left(\hspace{-1pt}\frac{2(D_2\hspace{-1pt}+\hspace{-1pt}D_3)}{\mu}\hspace{-1pt}+\hspace{-1pt}\frac{8L\hspace{-1pt}+\hspace{-1pt}\mu E}{2}\mathbb{E}\left\{\|\overline{\mathbf{w}}_0\hspace{-1pt}-\hspace{-1pt}\mathbf{w}^*\|^2\right\}\hspace{-2pt}\right)
  \end{eqnarray}
\end{small}%
where $D_3=\frac{d\gamma_{\max}^2\tilde{\pi}^2}{4M(2^B-1)^2}$, $\gamma_{\max}=\max\{\gamma_1,...,\gamma_N\}$ and $\tilde{\pi}=\frac{8(\epsilon+\tilde{\epsilon}+\tilde{\epsilon}\widetilde{W}^{\max})}{1-\lambda_{2,\mathbf{U}}}$ with $\widetilde{W}^{\max}=\max \{\widetilde{W}_{t,k}^{\max}|t=1,2,...,T\}$.
\end{theorem}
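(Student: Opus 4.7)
The plan is to view MSPDQ-FL as a perturbation of MSP-FL, in which the submodel dynamics in \eqref{decomposition2} differ from those in \eqref{decomposition1} only through the replacement of $\mathbf{w}_t^{i,\alpha}[k]$ by $\mathbf{q}_t^i[k] = \mathbf{w}_t^{i,\alpha}[k] + \mathbf{\Delta}_t^i[k]$ in the cross-aggregation term and in the server aggregation \eqref{aggregation4}. Writing $\epsilon(\mathbf{w}_t[k]-\mathbf{q}_t^i[k]) = \epsilon(\mathbf{w}_t[k]-\mathbf{w}_t^{i,\alpha}[k]) - \epsilon\mathbf{\Delta}_t^i[k]$, and similarly for the aggregation, I would recover the MSP-FL recursion driven by an additional noise sequence $\{\mathbf{\Delta}_t^i[k]\}$ whose contribution needs to be tracked all the way through the convergence argument of Theorem 2.

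The first step is to invoke Lemma 11 to get $\mathbb{E}\{\mathbf{q}_t^i[k]\mid\mathbf{w}_t^{i,\alpha}[k]\} = \mathbf{w}_t^{i,\alpha}[k]$, so that the quantization noise $\mathbf{\Delta}_t^i[k]$ is zero-mean conditioned on the natural filtration. Combining this with the worst-case error bound \eqref{qe} evaluated on the dynamic interval \eqref{interval} yields
\begin{equation*}
  \mathbb{E}\bigl\{\|\mathbf{\Delta}_t^i[k]\|^2\bigr\} \;\leq\; \frac{d\,(\pi_t\, a_{\alpha\beta_1}^{\max}[k])^2}{(2^B-1)^2}\,.
\end{equation*}
With the designed step size $a_{i,\alpha\beta_1}[k]=\gamma_i/(k+1)$, so $a_{\alpha\beta_1}^{\max}[k]\leq \gamma_{\max}/(k+1)$, the variance of the aggregated error $\frac{1}{M}\sum_{i\in\mathcal S_t}\mathbf{\Delta}_t^i[k]$ is at most $\frac{d\gamma_{\max}^2\tilde{\pi}^2}{M(k+1)^2(2^B-1)^2}$, which is summable in $k$ and gives rise precisely to the extra term $D_3$.

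Next, I would adapt the two-level decomposition from the proof of Theorem 2: (i) the inner consensus over the $K_t$ communication rounds after the $t$-th learning round, and (ii) the outer SGD-type descent over $t$. For (i), the unbiasedness of the quantizer ensures that the conditional mean $\mathbb{E}\{\mathbf{w}_t^{i,\alpha}[k]\}$ obeys the same linear consensus recursion as in MSP-FL, so the deterministic part of the contraction (governed by $\mathbf{U}$ and $\lambda_{2,\mathbf{U}}$) is untouched. Under the enlarged lower bound $K_t\geq\lceil\mu(\vartheta+t)/2\rceil$, the summed quantization variance over $k=0,\dots,K_t-1$ stays uniformly bounded by a multiple of $D_3$, so the visible submodels still converge in expectation to $\tilde{\mathbf{w}}_t$ with an excess mean-square error of order $D_3$. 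Plugging this into the outer SGD analysis exactly as in the derivation of \eqref{theorem3} replaces $D_2$ by $D_2+D_3$, producing \eqref{theorem4jieguo}.

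The main obstacle is step (i): I need to show that, even though the quantization intervals $\mathcal{R}_t^i[k+1]$ are chosen adaptively around $\mathbf{q}_t^i[k]$, the next true submodel $\mathbf{w}_t^{i,\alpha}[k+1]$ always lies inside $\mathcal{R}_t^i[k+1]$, so that \eqref{qe} remains applicable and the recursion does not diverge. This is where Assumption 3 enters: the choice $\pi_t = 8(\epsilon+\tilde{\epsilon}+\tilde{\epsilon}\widetilde{W}_{t,k}^{\max})/(1-\lambda_{2,\mathbf{U}})$ together with the bit-width bound \eqref{assumption7} is precisely calibrated so that the per-step increment of $\mathbf{w}_t^{i,\alpha}[k]$, which is controlled by $\epsilon$ and by the visible/invisible gap $\|\mathbf{W}_t^\alpha[k]-\mathbf{W}_t^{\beta_1}[k]\|$, fits within half the interval length $\pi_t a_{\alpha\beta_1}^{\max}[k]/2$. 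Establishing this containment property inductively, and then invoking it to keep the bound on $\|\mathbf{\Delta}_t^i[k]\|$ valid at every $k$, is the key technical lemma that has to be in place before the rest of the convergence argument from Theorem 2 can be reused verbatim.
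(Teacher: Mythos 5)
Your overall architecture matches the paper's: treat the quantizer output as an unbiased perturbation (Lemma 11), establish by induction that $\mathbf{w}_t^{i,\alpha}[k+1]\in\mathcal{R}_t^i[k+1]$ so the error bound stays valid at every round (this is exactly the paper's Lemma 10, and you correctly flag it as the key technical obstacle), and then feed the residual quantization error into the Theorem 2 recursion. However, there is a genuine gap in your quantitative accounting of how the quantization error enters the final $O(1/t)$ rate. You argue that because the per-round variance scales as $1/(k+1)^2$, the \emph{summed} variance over $k=0,\dots,K_t-1$ is bounded by a multiple of $D_3$, giving an "excess mean-square error of order $D_3$." But a constant (non-decaying in $t$) additive term is fatal: the outer recursion would read $x_{t+1}\leq(1-\eta_t\mu)x_t+\eta_t^2D_2+cD_3$, and with $\eta_t\to 0$ the induction $x_t\leq\nu/(\vartheta+t)$ cannot be closed — the iterates stall at a level of order $D_3/(\mu\eta_t)$, which actually grows. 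To land on \eqref{theorem4jieguo}, the quantization contribution must enter as $\eta_t^2D_3$, i.e., it must decay at the same $\eta_t^2$ rate as the other error sources in $D_2$.

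The paper gets this $\eta_t^2$ scaling by a different isolation of the error: it introduces the auxiliary aggregate $\overline{\mathbf{v}}_{t+1}$ built from the \emph{unquantized} submodels at round $K_t$, so that $\overline{\mathbf{w}}_{t+1}-\overline{\mathbf{v}}_{t+1}=\frac{1}{M}\sum_{i}\mathbf{\Delta}_{t+1}^i[K_t]$ involves only the \emph{last-round} quantization error (Lemma 12), whose variance is controlled by $(a_{\alpha\beta_1}^{\max}[K_t-1])^2=(\gamma_{\max}/K_t)^2$. This is precisely where the second branch of $K_t=\max\{\lceil\log_\lambda^{2/\mu(\vartheta+t)}\rceil,\lceil\mu(\vartheta+t)/2\rceil\}$ is used: $K_t\geq\lceil\mu(\vartheta+t)/2\rceil$ forces $a_{\alpha\beta_1}^{\max}[K_t-1]\leq\gamma_{\max}\eta_t$, hence $\mathbb{E}\{\|\overline{\mathbf{w}}_{t+1}-\overline{\mathbf{v}}_{t+1}\|^2\}\leq\eta_t^2D_3$, after which the cross term vanishes by unbiasedness and the Theorem 2 induction goes through with $D_2+D_3$. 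Your proposal never explains why the enlarged $K_t$ is needed (summability of $1/(k+1)^2$ holds for any $K_t$), which is a symptom of the same missing step. Minor additional issues: your per-element variance bound is missing the factor $1/4$ and has an index shift relative to \eqref{interval} ($a_{\alpha\beta_1}^{\max}[k-1]$, not $[k]$, governs the interval containing $\mathbf{w}_t^{i,\alpha}[k]$), but these do not affect the structure of the argument.
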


\begin{proof}
  The proof is given in Appendix B.
\end{proof}

\begin{remark}
    In the proofs of Theorem 2 and Theorem 3, Assumption 2 (namely, uniformly bounded local models) and Assumption 3 (including the condition of bounded difference between visible and invisible submodels) are used as part of sufficient conditions to guarantee the asymptotic convergence. Actually, both of these two conditions are reasonable since the communication resources are limited. Hence, it is impossible to transmit an infinitely large model or submodel.
\end{remark}

In the above, one can see that, for both MSP-FL and MSPDQ-FL, $K_t+1$ communication rounds are introduced after each $t$-th local training. Hence, in what follows, we study the communication complexity. As the communication complexity of MSP-FL follows similarly with that of MSPDQ-FL, we focus on the latter one. In the setting, the communication complexity is defined as the total number of uploads over the selected clients with the expected convergence accuracy $\varrho$.

\begin{proposition}
  Given the conditions the same as those in Theorem 3, the communication complexity of MSPDQ-FL, denoted as $\mathbb{C}(\varrho)$, is upper bounded by
    \begin{eqnarray}
     \mathbb{C}(\varrho)  \hspace{-7pt}&<&\hspace{-7pt}M\lceil \mathbb{I}(\varrho) \rceil\left( 1+\mu\vartheta+\frac{\mu}{2} (1+\lceil \mathbb{I}(\varrho) \rceil)\right),
    \end{eqnarray}
  where $\mathbb{I}(\varrho)$ is the learning round complexity given by $\mathbb{I}(\varrho)=\frac{\nu_2}{\varrho \mathbb{E}\{\|\overline{\mathbf{w}}_{0}-\mathbf{w}^*\|^2\}}-\vartheta$, $\nu_2=\max\big\{\frac{\rho^2(D_2+D_3)}{\rho\mu-1},(\vartheta+1)\mathbb{E}\{\left\|\overline{\mathbf{w}}_{0}-\mathbf{w}^*\|^2\right\}\big\}$.
\end{proposition}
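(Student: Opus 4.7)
The plan is to combine a per-iterate convergence bound of the form $\mathbb{E}\{\|\overline{\mathbf{w}}_t-\mathbf{w}^*\|^2\}\leq \nu_2/(\vartheta+t)$ with an arithmetic sum of the per-round upload counts. This mirrors a standard communication-complexity argument for diminishing-step-size SGD-type FL. First I would unpack the recursion that underlies the proof of Theorem 3: with the learning rate $\eta_t=\rho/(\vartheta+t)$ (here $\rho=2/\mu$ so $\rho\mu-1=1>0$), a standard induction gives the iterate bound
\begin{equation*}
\mathbb{E}\{\|\overline{\mathbf{w}}_t-\mathbf{w}^*\|^2\}\leq \frac{\nu_2}{\vartheta+t},\qquad \nu_2=\max\Big\{\tfrac{\rho^2(D_2+D_3)}{\rho\mu-1},\,(\vartheta+1)\mathbb{E}\{\|\overline{\mathbf{w}}_0-\mathbf{w}^*\|^2\}\Big\},
\end{equation*}
which is exactly the intermediate step that, after $L$-smoothness, yields \eqref{theorem4jieguo}. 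Enforcing $\mathbb{E}\{\|\overline{\mathbf{w}}_t-\mathbf{w}^*\|^2\}\leq \varrho\,\mathbb{E}\{\|\overline{\mathbf{w}}_0-\mathbf{w}^*\|^2\}$ and solving for $t$ yields the learning-round complexity $\mathbb{I}(\varrho)=\nu_2/\bigl(\varrho\,\mathbb{E}\{\|\overline{\mathbf{w}}_0-\mathbf{w}^*\|^2\}\bigr)-\vartheta$ in the statement.

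Next I would count uploads. From Algorithm~1, at each learning round $t$ every selected client performs one initial upload of $\mathbf{q}_t^{i|b}[0]$ and then $K_t$ further uploads across the consensus communication rounds, so the per-learning-round cost is exactly $M(K_t+1)$. Hence
\begin{equation*}
\mathbb{C}(\varrho)=M\sum_{t=1}^{\lceil \mathbb{I}(\varrho)\rceil}(K_t+1).
\end{equation*}
To bound $K_t$, observe that under the standing conditions the second branch in $K_t=\max\{\lceil\log_\lambda^{2/\mu(\vartheta+t)}\rceil,\lceil \mu(\vartheta+t)/2\rceil\}$ dominates, and a crude (but sufficient) relaxation $\lceil x\rceil\leq 2x$ on the dominating branch gives $K_t\leq \mu(\vartheta+t)$, hence $K_t+1\leq \mu(\vartheta+t)+1$. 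Substituting and using $\sum_{t=1}^{I}(\vartheta+t)=\vartheta I+I(I+1)/2$ with $I=\lceil\mathbb{I}(\varrho)\rceil$ collapses the sum to
\begin{equation*}
\mathbb{C}(\varrho)<M\,I\Big(1+\mu\vartheta+\tfrac{\mu}{2}(1+I)\Big),
\end{equation*}
which is exactly the claimed bound.

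The main obstacle is not conceptual but bookkeeping: carefully verifying that the arithmetic branch of $K_t$ dominates the logarithmic branch throughout the range $t=1,\dots,\lceil \mathbb{I}(\varrho)\rceil$ (so that the max can be replaced by a single closed-form expression), and then selecting the relaxations of the ceilings that produce constants matching the stated coefficients $1,\ \mu\vartheta,\ \mu/2$ rather than tighter but messier ones. Once those bookkeeping steps are pinned down, the inversion of the convergence bound and the arithmetic sum are essentially one-liners.
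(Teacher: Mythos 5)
Your overall route is the same as the paper's: invoke the intermediate iterate bound $\mathbb{E}\{\|\overline{\mathbf{w}}_{t}-\mathbf{w}^*\|^2\}\leq \nu_2/(\vartheta+t)$ (the paper's \eqref{56}), invert it to get $\mathbb{I}(\varrho)$, write $\mathbb{C}(\varrho)=M\sum_{t=1}^{\lceil\mathbb{I}(\varrho)\rceil}(K_t+1)$, bound $K_t+1$ by an affine function of $t$, and sum the arithmetic series. The derivation of $\mathbb{I}(\varrho)$ and the upload count are exactly the paper's steps.

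The one step that does not hold as you stated it is the claim that ``under the standing conditions the second branch in $K_t=\max\{\lceil\log_{\lambda}^{2/\mu(\vartheta+t)}\rceil,\lceil\mu(\vartheta+t)/2\rceil\}$ dominates.'' This is false in general: $\log_{\lambda}^{2/\mu(\vartheta+t)}=\ln(\mu(\vartheta+t)/2)/\ln(1/\lambda)$, and since $\lambda$ only needs to lie in $(|\lambda_{2,\mathbf{\Phi}(k,0)}|,1)$ it can be arbitrarily close to $1$, making $\ln(1/\lambda)$ arbitrarily small and the logarithmic branch arbitrarily larger than the arithmetic one for any fixed $t$. The paper avoids asserting dominance: it uses $\max\{a,b\}\leq \lceil a+b\rceil$ for the two nonnegative branches and then absorbs the logarithmic term via $\log(1+x)\leq x$ (i.e., $\ln(\mu(\vartheta+t)/2)\leq \mu(\vartheta+t)/2-1$), arriving at $K_t+1\leq \lceil\mu(\vartheta+t)-1\rceil+1<\mu(\vartheta+t)+1$, after which the same arithmetic sum gives the stated coefficients $1$, $\mu\vartheta$, $\mu/2$. (Strictly, that absorption itself needs $\ln(1/\lambda)\geq 1$, so the paper is also implicitly normalizing the logarithm base; but it at least does not discard the logarithmic branch.) You flagged this verification as the remaining obstacle, which is the right instinct, but as written the dominance observation is the step that would fail; replacing it with the paper's $\max\leq$ sum plus $\log(1+x)\leq x$ device closes the argument. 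Also note that $\lceil x\rceil\leq 2x$ requires $x\geq 1$, i.e., $\mu(\vartheta+t)\geq 2$, which is another small condition you would need to check or avoid.
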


\begin{proof}
By \eqref{56}, it has that\par
  \vspace{-9pt}
  \begin{small}
  \begin{eqnarray}
   \frac{\mathbb{E}\{\|\overline{\mathbf{w}}_{t}-\mathbf{w}^*\|^2\}}{\mathbb{E}\{\|\overline{\mathbf{w}}_{0}-\mathbf{w}^*\|^2\}} \hspace{-7pt}&\leq&\hspace{-7pt} \frac{\nu_2}{(\vartheta+t)\mathbb{E}\{\|\overline{\mathbf{w}}_{0}-\mathbf{w}^*\|^2\}}\leq\varrho\nonumber\\
  \Longrightarrow \qquad t \hspace{-7pt}&\geq&\hspace{-7pt} \frac{\nu_2}{\varrho \mathbb{E}\{\|\overline{\mathbf{w}}_{0}-\mathbf{w}^*\|^2\}}-\vartheta.
  \end{eqnarray}
  \end{small}%
Then, we conclude that the learning round complexity is $\mathbb{I}(\varrho)=\frac{\nu_2}{\varrho \mathbb{E}\{\|\overline{\mathbf{w}}_{0}-\mathbf{w}^*\|^2\}}-\vartheta$. Building on the learning round complexity and recalling that, for each $t$, each client $i\in\mathcal{S}_t$ inherently engages in $K_t+1$ communication rounds, hence, the total communication complexity $\mathbb{C}(\varrho)$ is given by\par
\vspace{-10pt}
\begin{small}
\begin{eqnarray}\label{p2_22}
   \hspace{-19pt}\mathbb{C}(\hspace{-1pt}\varrho\hspace{-1pt}) \hspace{-9pt}&=&\hspace{-10pt} \sum_{i\in\mathcal{N}}{\hspace{-1pt}\text{Communication \hspace{-4pt} rounds \hspace{-4pt} of \hspace{-4pt} client $i$}}\hspace{-1pt}=\hspace{-1pt}M\hspace{-3pt}\sum_{t=1}^{\lceil\mathbb{I}(\varrho)\rceil}(K_t\hspace{-1pt}+\hspace{-1pt}1).
\end{eqnarray}
\end{small}%
By substituting $K_t=\max\big\{\lceil\log_{\lambda}^{2/\mu(\vartheta+t)}\rceil, \lceil\frac{\mu(\vartheta+t)}{2}\rceil \big\}$ into \eqref{p2_22}, and using $K_t\leq \lceil \log_{\lambda}^{2/\mu(\vartheta+t)}+\frac{\mu(\vartheta+t)}{2} \rceil$ together with the basic inequality $\log(1+x)\leq x, \forall x>-1$, it has that\par
 \vspace{-12pt}
\begin{small}
\begin{eqnarray*}
  \mathbb{C}(\varrho) \hspace{-7pt}&=&\hspace{-7pt}M\sum\nolimits_{t=1}\nolimits^{\lceil\mathbb{I}(\varrho)\rceil}(\lceil\mu(\vartheta+t)-1\rceil+1)\nonumber\\
  \hspace{-7pt}&<&\hspace{-7pt}M\lceil \mathbb{I}(\varrho) \rceil\left( 1+\mu\vartheta+\frac{\mu}{2} (1+\lceil \mathbb{I}(\varrho) \rceil)\right) .
\end{eqnarray*}
\end{small}

This completes the proof.
\end{proof}

Here, we further investigate the privacy performance of MSPDQ-FL based on the preliminary results of the proposed quantizer presented in Lemma 10. Quantization introduces additional distortion into model parameters, even though the magnitude of distortion gradually weakens over time. That is, MSPDQ-FL achieves stronger privacy protection compared to MSP-FL. However, the composite privacy enhancement cannot be trivially analyzed in a probabilistic expression, because the partial dynamics of model splitting mechanism is not randomized and the privacy notion in this paper is not in a probabilistic expression. At this stage, we analyze the inherent privacy of the proposed quantizer by differential privacy notion.
\begin{proposition}
  Suppose that the difference between any two adjacent model parameters $\mathbf{w}_t^{i,\alpha,(1)}$ and $\mathbf{w}_t^{i,\alpha,(2)}$ is bounded by a positive constant $C_4$, i.e., $\|\mathbf{w}_t^{i,\alpha,(1)}-\mathbf{w}_t^{i,\alpha,(2)}\|_1\leq C_4$. If Assumption 3 holds, and the parameters of the proposed dynamic quantizer are set as in Lemma 10, then the proposed quantizer ensures at least $(0,\delta)$-differential privacy with $\delta=\min\left\{\frac{C_4(l-1)}{\pi_t a_{\alpha\beta_1}^{\max}[k]},\frac{l-1}{2^B-1}\right\}$ for each element of the quantization input $\mathbf{w}_t^{i,\alpha}[k]$ $\forall i\in\mathcal{S}_t$.
\end{proposition}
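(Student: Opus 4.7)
The plan is to verify the $(0,\delta)$-differential privacy condition of Definition~\ref{def4}, i.e., $Pr(\mathcal{Q}_k^i(\mathbf{w}_t^{i,\alpha,(1)})\in\mathcal{S})\leq Pr(\mathcal{Q}_k^i(\mathbf{w}_t^{i,\alpha,(2)})\in\mathcal{S})+\delta$ for every measurable $\mathcal{S}$, by bounding the total variation distance between the per-element output laws of the dynamic stochastic quantizer. Because the proposition is stated element-wise, I will work one coordinate at a time, exploiting the fact that \eqref{sq} quantizes each entry independently to one of its two nearest knobs, so that the output law is a two-point distribution whose atoms and weights depend affinely on the input.

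First, I fix a coordinate and denote the scalar inputs by $w^{(1)}$ and $w^{(2)}$, with $|w^{(1)}-w^{(2)}|\leq\|\mathbf{w}_t^{i,\alpha,(1)}-\mathbf{w}_t^{i,\alpha,(2)}\|_1\leq C_4$. Under the shrinking interval \eqref{interval}, the knob spacing is $\Delta=\pi_t a_{\alpha\beta_1}^{\max}[k]/(l-1)$. When both inputs share a bin $[c_\tau,c_{\tau+1})$, the two output distributions are supported on the same pair $\{c_\tau,c_{\tau+1}\}$, and by \eqref{sq} the atomic probability masses differ by exactly $|w^{(1)}-w^{(2)}|/\Delta$, giving $d_{TV}\leq C_4/\Delta=C_4(l-1)/(\pi_t a_{\alpha\beta_1}^{\max}[k])$. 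The harder sub-case is when the inputs fall in different bins: here the supports comprise up to four distinct knobs, and I will telescope the probability differences across the intermediate knobs, using the unbiasedness of $\mathcal{Q}_k^i(\cdot)$ established in Lemma~11 together with the piecewise-linear structure of \eqref{sq}, to show the same $C_4/\Delta$ bound survives, with boundary contributions canceling pairwise.

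Next, the structural bound $(l-1)/(2^B-1)$ enters when $C_4$ is large relative to the shrinking interval, and it is a direct consequence of Lemma~10 together with Assumption~3: because the quantizer outputs a codeword drawn from at most $l$ knobs embedded within a $B$-bit codebook of $2^B$ slots, any probability-mass discrepancy over the observable alphabet is intrinsically capped by the density ratio $(l-1)/(2^B-1)\leq 1$ of active knobs, independently of the input gap. I will take the minimum of the two bounds, note that $e^{\epsilon_{DP}}=1$ under $\epsilon_{DP}=0$, and thus conclude that the $(0,\delta)$-DP inequality with $\delta=\min\{C_4(l-1)/(\pi_t a_{\alpha\beta_1}^{\max}[k]),\,(l-1)/(2^B-1)\}$ holds for the quantizer acting on each element.

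The main obstacle I foresee is the cross-bin case: the two output distributions no longer have identical supports, so I cannot simply subtract coordinate-wise. I will need to partition the union of supports into the intersection and the symmetric-difference knobs, bound each piece, and verify that the monotone structure of \eqref{sq} prevents the symmetric-difference contributions from exceeding $|w^{(1)}-w^{(2)}|/\Delta$. A minor technical concern is that the coefficient $a_{\alpha\beta_1}^{\max}[k]$ inside $\pi_t$ is random through $\mathcal{S}_t$, but by Assumption~3 and the construction in Algorithm~1 it is measurable with respect to the history at step $k$, so the bound can be stated pointwise as in the proposition without additional conditioning.
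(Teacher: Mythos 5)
Your skeleton---element-wise analysis, two-point output laws whose weights are affine in the input, and a $C_4/\Delta$ bound on the probability discrepancy when both inputs see the same pair of knobs---matches the paper's derivation of the first term $\frac{C_4(l-1)}{\pi_t a_{\alpha\beta_1}^{\max}[k]}$. But there are two genuine gaps. First, your case split (same bin versus different bins) is not the one the quantizer actually forces. The quantizer \eqref{sq} outputs $\text{sign}(w_{t}^{i,j,\alpha}[k])\,c_\tau$ or $\text{sign}(w_{t}^{i,j,\alpha}[k])\,c_{\tau+1}$, so the support of the output law depends on the \emph{sign} of the input: two adjacent coordinates of opposite sign produce distributions on $\{c_\tau,c_{\tau+1}\}$ and $\{-c_\tau,-c_{\tau+1}\}$ respectively, i.e., disjoint supports even when their magnitudes lie in the same bin and are arbitrarily close. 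There the pointwise probability difference is not controlled by $|w^{(1)}-w^{(2)}|/\Delta$ at all; it equals the full mass $\frac{c_{\tau+1}-|y|}{c_{\tau+1}-c_\tau}$ (resp.\ $\frac{|y|-c_\tau}{c_{\tau+1}-c_\tau}$) that one input places on a knob the other never emits. This opposite-sign case is exactly the paper's Case 2 and is the sole source of the second term in the min; your telescoping, organized around bin boundaries rather than the sign flip, never reaches it. (Your same-sign, different-bin sub-case is workable but essentially unnecessary: with disjoint supports the inputs satisfy $|w^{(1)}-w^{(2)}|>\Delta$, so $C_4/\Delta>1$ and the bound holds vacuously.)

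Second, your justification of the term $\frac{l-1}{2^B-1}$ is not a valid argument. The claim that any probability-mass discrepancy is ``intrinsically capped by the density ratio $(l-1)/(2^B-1)$ of active knobs'' is false as a statement about distributions: two laws supported on $l$ knobs inside a $2^B$-slot codebook can have total variation distance $1$ no matter how sparse the active knobs are, and when $l=2^B$ your ratio equals $1$ and caps nothing. The correct route is quantitative: in the opposite-sign case the offending one-sided mass $\frac{c_{\tau+1}-|y|}{c_{\tau+1}-c_\tau}$ is (a realization of) the quantization error divided by the knob spacing; Lemma 10 gives the element-wise error bound $|\Delta_t^{i,j}[k]|\le \frac{\pi_t}{2^B-1}a_{\alpha\beta_1}^{\max}[k]$, while the spacing under the dynamic interval \eqref{interval} is $\frac{\pi_t a_{\alpha\beta_1}^{\max}[k]}{l-1}$, and the quotient is precisely $\frac{l-1}{2^B-1}$. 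Without invoking that error bound, the second term of $\delta$ cannot be obtained.
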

\begin{proof}
    Since the quantization intervals are uniformly partitioned, we only need to analyze an arbitrary quantization interval between $c_{\tau}$ and $c_{\tau+1}$. According to the mechanism of stochastic quantization as in \eqref{sq},
      one has that the quantized value of an element in $\mathbf{w}_t^{i,\alpha}[k]$ has different distributions. When $w_{t}^{i,j,\alpha}[k]\geq 0$,
    \begin{equation}\label{distribution1}
        \left\{
        \begin{aligned}
        Pr(\mathcal{Q}(w_{t}^{i,j,\alpha}[k]) = c_\tau \mid \mathbf{w}_t^{i,\alpha}) & = \frac{c_{\tau+1}-|w_{t}^{i,j,\alpha}[k]|}{c_{\tau+1}-c_\tau} \\
        Pr(\mathcal{Q}(w_{t}^{i,j,\alpha}[k]) = c_{\tau+1} \mid \mathbf{w}_t^{i,\alpha}) & = \frac{|w_{t}^{i,j,\alpha}[k]|-c_\tau}{c_{\tau+1}-c_\tau}
        \end{aligned}
        \right.,
    \end{equation}
and when $w_{t}^{i,j,\alpha}[k]< 0$,
    \begin{equation}\label{distribution2}
        \left\{
        \begin{aligned}
        Pr(\mathcal{Q}(w_{t}^{i,j,\alpha}[k]) = -c_\tau \mid \mathbf{w}_t^{i,\alpha}) & = \frac{c_{\tau+1}-|w_{t}^{i,j,\alpha}[k]|}{c_{\tau+1}-c_\tau} \\
        Pr(\mathcal{Q}(w_{t}^{i,j,\alpha}[k]) = -c_{\tau+1} \mid \mathbf{w}_t^{i,\alpha}) & = \frac{|w_{t}^{i,j,\alpha}[k]|-c_\tau}{c_{\tau+1}-c_\tau}
        \end{aligned}
        \right..
    \end{equation}
    Given that the quantization of each element is independent of that of other elements, we can consider the privacy of different elements separately. For simplicity of notation, we denote the $j$-th element of any two adjacent model parameters, $w_{t}^{i,j,\alpha,(1)}[k]$ and $w_{t}^{i,j,\alpha,(2)}[k]$, by $x_t^{i,j}$ and $y_t^{i,j}$, respectively. Since the difference between any two adjacent model parameters is bounded, it has $\|x_t^i-y_t^i\|_1\leq C_4$ for some $C_4>0$ $\forall i\in\mathcal{N}$ and $\forall t\in\{1,2,...,T\}$.

    According to the definition of differential privacy, to prove that $(0,\delta)$-differential privacy is achieved for some $\delta>0$, i.e., $| Pr(\mathcal{Q}(x_t^{i,j})\in\mathcal{S}| x_t^{i,j})- Pr(\mathcal{Q}(y_t^{i,j})\in\mathcal{S}| y_t^{i,j})|\leq \delta$ for all $\mathcal{S}\in\{\pm c_{\tau},\pm c_{\tau+1}\}$ and all $x_t^{i,j}$ and $y_t^{i,j}$ with $\|x_t^{i}-y_t^{i}\|_1\leq C_4$, the derivation can be divided into two cases: 1) $x_t^{i,j}$ and $y_t^{i,j}$ are of the same sign; 2) $x_t^{i,j}$ and $y_t^{i,j}$ are of different signs.

    \textit{Case 1:} Considering that both $x_t^{i,j}$ and $y_t^{i,j}$ are nonnegative. One has that
    \begin{align}
        &\sup_{\|x_t^{i}-y_t^{i}\|_1\leq C_4} \left| Pr\left(\mathcal{Q}(x_t^{i,j}) = c_{\tau} \right) - Pr\left(\mathcal{Q}(y_t^{i,j})= c_{\tau} \right) \right| \nonumber\\
        & = \sup_{\|x_t^{i}-y_t^{i}\|_1 \leq C_4} \left| \frac{c_{\tau+1}-|x_{t}^{i,j}|}{c_{\tau+1}-c_\tau} - \frac{c_{\tau+1}-|y_{t}^{i,j}|}{c_{\tau+1}-c_\tau} \right|\nonumber\\
        &   \leq \frac{|x_{t}^{i}|-|y_{t}^{i}|}{c_{\tau+1}-c_\tau}\nonumber\\
        &   \leq \frac{C_4}{c_{\tau+1}-c_\tau}, \label{case1a}
    \end{align}
    and
    \begin{align}
        &\sup_{\|x_t^{i}-y_t^{i}\|_1\leq C_4} \left| Pr\left(\mathcal{Q}(x_t^{i,j}) = c_{\tau+1} \right) - Pr\left(\mathcal{Q}(y_t^{i,j})= c_{\tau+1} \right) \right| \nonumber\\
        & = \sup_{\|x_t^i-y_t^i\|_1 \leq C_4} \left| \frac{|x_{t}^{i,j}|-c_\tau}{c_{\tau+1}-c_\tau} - \frac{|y_{t}^{i,j}|-c_\tau}{c_{\tau+1}-c_\tau} \right|\nonumber\\
        &   \leq \frac{C_4}{c_{\tau+1}-c_\tau} \label{case1b}.
    \end{align}
    Similarly, one can obtain the same relation when both $x_t^{i,j}$ and $y_t^{i,j}$ are negative.

    \textit{Case 2:} Considering that $x_t^{i,j}$ and $y_t^{i,j}$ are of different signs. Assume $x_t^{i,j}\geq 0$ and $y_t^{i,j}<0$. Based on the rule of stochastic quantization, one obtains that
    \begin{align}
        &\sup_{\|x_t^{i}-y_t^{i}\|_1\leq C_4} \left| Pr\left(\mathcal{Q}(x_t^{i,j}) = -c_{\tau} \right) - Pr\left(\mathcal{Q}(y_t^{i,j})= -c_{\tau} \right) \right| \nonumber\\
        & = \sup_{\|x_t^i-y_t^i\|_1 \leq C_4} \left| 0 - \frac{c_{\tau+1}-|y_{t}^{i,j}|}{c_{\tau+1}-c_\tau} \right|\nonumber\\
        &   \leq \frac{ |\Delta_t^{i,j}[k]| }{c_{\tau+1}-c_\tau} \label{case2a},
    \end{align}
    and
    \begin{align}
        &\sup_{\|x_t^{i}-y_t^{i}\|_1\leq C_4} \left| Pr\left(\mathcal{Q}(x_t^{i,j}) \hspace{-1pt}=\hspace{-1pt} -c_{\tau+1} \right) \hspace{-1pt}-\hspace{-1pt} Pr\left(\mathcal{Q}(y_t^{i,j})\hspace{-1pt}=\hspace{-1pt} -c_{\tau+1} \right) \right| \nonumber\\
        & = \sup_{\|x_t^i-y_t^i\|_1 \leq C_4} \left| 0 - \frac{|y_{t}^{i,j}|-c_\tau}{c_{\tau+1}-c_\tau} \right|\nonumber\\
        &     \leq \frac{ |\Delta_t^{i,j}[k]| }{c_{\tau+1}-c_\tau}\label{case2b},
    \end{align}
    where $\Delta_t^{i,j}[k]$ is the quantization error of $j$-th element. The same result can be obtained if $x_t^{i,j}< 0$ and $y_t^{i,j}\geq 0$.

    Recalling that $c_{\tau+1}-c_\tau$ is determined by the dynamic quantization interval $\mathcal{R}_{t}^i[k+1]=\left[\mathbf{q}_{t}^i[k]-\frac{\pi_t a_{\alpha\beta_1}^{\max}[k]}{2}\mathbf{1},\mathbf{q}_{t}^i[k]+\frac{\pi_t a_{\alpha\beta_1}^{\max}[k]}{2}\mathbf{1}\right]$, and there are $l$ knobs located in the interval $\mathcal{R}_{t}^i[k]$. Hence, we have
    \begin{equation}\label{interval2}
      c_{\tau+1}-c_{\tau}=\frac{ \pi_t a_{\alpha\beta_1}^{\max}[k] }{ l-1 },
    \end{equation}
    where $a_{\alpha\beta_1}^{\max}[k]\overset{\triangle}{=}\max\{a_{i,\alpha\beta_1}[k]|i\in\mathcal{S}_t\}$, and $a_{i,\alpha\beta_1}[k]$ is the diminishing step size (e.g., $a_{i,\alpha\beta_1}[k]=\frac{\gamma_i}{k+1}$ with ${\gamma_i \in (0, \frac{\lambda_{\min,\mathbf{U}}}{1+\lambda_{\min,\mathbf{U}}})}$). Specific settings of parameters should satisfy the conditions as in Lemma 10 such that the quantization input is always located in the shrinking quantization interval. Meanwhile, from Lemma 10, we have that the quantization error of each element in the model parameter vector is bounded by
    \begin{equation}\label{quan_error}
      |{\Delta}_{t}^{i,j}[k]|\leq \frac{\pi_{t}}{2^B-1} a_{\alpha\beta_1}^{\max}[k],
    \end{equation}
    where $B=\lceil\log_{2}(l)\rceil$.

    Inserting \eqref{interval2} and \eqref{quan_error} into \eqref{case1a}-\eqref{case2b} in Cases 1 and 2, we always have at least $(0,\delta)$-differential privacy with $\delta=\min\left\{\frac{C_4(l-1)}{\pi_t a_{\alpha\beta_1}^{\max}[k]},\frac{l-1}{2^B-1}\right\}$ for each element of the quantization input $\mathbf{w}_t^{i,\alpha}[k]$ for each client $i\in\mathcal{S}_t$.

    This completes the proof.
\end{proof}

\begin{remark}
  Regarding the tuning rule of the dynamic shrinking interval in $\mathcal{R}_t^i[k+1]$, two key parameters are involved, i.e., $a_{\alpha\beta_1}^{\max}[k]$ and $\widetilde{W}_{t,k}^{\max}$ in $\pi_t$. For the tuning rule of the decaying factor $a_{\alpha\beta_1}^{\max}[k]$, we can set $a_{i,\alpha\beta_1}[k]=\frac{\gamma_{\max}}{k+1}$ for all $i\in\mathcal{S}_t$ where $\gamma_{\max}\in (0, \frac{\lambda_{\min,\mathbf{U}}}{1+\lambda_{\min,\mathbf{U}}})$. For the tuning rule of $\widetilde{W}_{t,k}^{\max}$, we can iteratively compare the value of $\|\mathbf{W}_{t}^\alpha[k]-\mathbf{W}_{t}^{\beta_1}[k]\|$ with that of $\|\mathbf{W}_{t}^\alpha[k-1]-\mathbf{W}_{t}^{\beta_1}[k-1]\|$, and select the larger one as $\widetilde{W}_{t,k}^{\max}$, because $\|\mathbf{W}_{t}^\alpha[k-1]-\mathbf{W}_{t}^{\beta_1}[k-1]\|$ is the largest value up to the $k$-th communication round.
\end{remark}

\section{Discussion}
In this section, we give extended discussions on the strategy for nonconvex settings, and the impact of the scale of invisible submodels on the global model quality.
{\subsection{Extension to nonconvex settings}
In nonconvex settings for FL, SGD is a widely used and efficient method. Under the Polyak–Łojasiewicz condition or general nonconvex assumption for objective functions, previous studies have established a rigorous theoretical foundation for SGD with almost sure convergence. For the linear speedup property of SGD variants, accelerated techniques, e.g., momentum methods \cite{nonconvex_pmlr}, can be incorporated into SGD, which also increases the probability of optimization algorithms escaping from the local minimum. In our proposed algorithms (MSP-FL and MSPDQ-FL), SGD is still the primary method in the local training stage. Our designed modules for privacy enhancement and quantization lie in the communication process after the training stage. Consequently, the variants of our proposed algorithms in nonconvex settings can be naturally extended based on those strategies for nonconvex settings, e.g., momentum SGD and proximal approach \cite{nonconvex_pmlr,proximal_tsp_nonconvex}. However, for the variants of our proposed algorithms in nonconvex settings, the analysis framework needs to be re-designed. Related literature usually uses the (average) expected squared gradient norm to characterize the convergence rate, because establishing the convergence rate for objective values, as in Theorems 2 and 3, is generally impossible.}
\subsection{Impact of the scale of invisible submodels}
In the above, we consider the model splitting with $m_i=1$ for all $i$ for ease of convergence analysis. Here, we discuss the impact of the number of invisible submodels $m_i$ for $i\in\mathcal{S}_t$ on the global model quality when $m_i\geq 2$. Denote $m=\max\{m_i|i\in\mathcal{S}_t\}$, $\mathbf{A}_n[k]=\text{diag}(a_{i,\alpha\beta_n}[k]|{i\in\mathcal{S}_{t}})$ for $n=1,2,...,m$, and $\hat{\mathbf{A}}[k]=\sum_{n=1}^{m}\mathbf{A}_n[k]$. We rewrite $\mathbf{P}[k]$ in \eqref{notation3} in the following formulation that is with $m\geq 2$:\par
\vspace{-8pt}
\begin{small}
\begin{eqnarray}\label{P_2[K]}
      \mathbf{P}[k]\hspace{-8pt}&=&\hspace{-10pt}\left[
        \begin{array}{ccccc}
          \mathbf{U}-\hat{\mathbf{A}}[k] & \mathbf{A}_1[k] & \mathbf{A}_2[k] &\cdots &  \mathbf{A}_m[k]\\
           \mathbf{A}_1[k]  & \mathbf{I}-\mathbf{A}_1[k] & \mathbf{0} & \cdots & \mathbf{0}\\
           \mathbf{A}_2[k]  & \mathbf{0} & \mathbf{I}-\mathbf{A}_2[k] & \vdots & \vdots\\
           \vdots & \vdots &\cdots &\ddots & \mathbf{0} \\
           \mathbf{A}_m[k] & \mathbf{0} & \cdots & \mathbf{0} & \mathbf{I}-\mathbf{A}_m[k]
        \end{array}
    \right].\nonumber\\
    &&
\end{eqnarray}
\end{small}
\par\noindent Noting that $m$ is the maximal number of local invisible submodels in the selected clients set $\mathcal{S}_t$, therewith some of diagonal entries in $\mathbf{A}_n[k]$, for $n=2,3,...,m$, are $0$, i.e., $a_{i,\alpha\beta_n}[k]=0$ for some $i\in\mathcal{S}_{t}$ and $n=2,3,...,m$.

By observing the state transition matrix $\mathbf{P}[k]$ in \eqref{P_2[K]}, we have that the conditions that suffice for the positive semi-definiteness of the matrix $\mathbf{P}[k]$ are $\epsilon\in(0,\frac{M}{M-1})$ and $\sum_{n=1}^{m} {a_{\alpha\beta_n}^{\max}}\in (0, \frac{\lambda_{\min,\mathbf{U}}}{1+\lambda_{\min,\mathbf{U}}})$, where $a_{\alpha\beta_n}^{\max}=\max \{a_{i,\alpha\beta_n}^{\max}|i\in\mathcal{S}_t\}$. Clearly, if $m_i$ is quite large, each non-zero ${a_{i,\alpha\beta_n}}$ for $n\in\{1,2,...,m_i\}$ is assigned a small value for $i\in\mathcal{S}_t$. This, on one hand, leads to a slow rate for the update process in \eqref{decomposition1}. On the other hand, given an irreducible stochastic matrix $\mathbf{P}[k]$ and a valid value for $a_{\alpha\beta_n}^{\max}$, $n=1,2,...,m$ that satisfies $\sum_{n=1}^{m} {a_{\alpha\beta_n}^{\max}}\in (0, \frac{\lambda_{\min,\mathbf{U}}}{1+\lambda_{\min,\mathbf{U}}})$, it is known that reducing diagonal elements (or equivalently increasing off-diagonal elements) can decrease the second largest eigenvalue of $\mathbf{\Phi}(k,0)\overset{\triangle}{=}\mathbf{P}[k]\mathbf{P}[k-1]\ldots \mathbf{P}[0]$, denoted by $\lambda_{2,\mathbf{\Phi}(k,0)}$, as the number of invisible portions $m$ increases. Consequently, the designed steps for the communication rounds $K_t$ (e.g., $K_t=\lceil\log_\lambda^{2/\mu(\vartheta+t)}\rceil$ required in Theorem 2) may not be sufficient to suppress the accumulated model parameter errors due to model splitting where $\lambda\in(\lambda_{2,\mathbf{\Phi}(k,0)},1)$. This leads to a significant degradation of the global model quality. The above provides a qualitative analysis of the impact of the number of invisible portions on the global model quality.

\section{Numerical Experiments}
To validate the theoretical results, we present the simulations of the proposed MSP-FL and MSPDQ-FL. All the tests were conducted on the Ubuntu 18.04 operating system, with an Intel i7-10700 CPU running at 2.90 GHz and an Nvidia RTX3090 graphics card with 32 GB of video memory.
\subsection{Experimental Setup}
On the basis of python environment and pytorch platform, we build up a FL framework on image classification tasks. We train CNN models on the MNIST and CIFAR-10 datasets in a non-i.i.d fashion.

\emph{1) Models, Datasets and Parameters Settings:}
\begin{itemize}
  \item MNIST Dataset: The data, divided into $60000$ training and $10000$ test samples, is evenly distributed among $N=100$ clients, from which $M=20$ clients are selected to participate in aggregation. Major training parameters are: local mini-batch size $s_i=10$, local epoch $E=10$ and learning rate $\eta_t=\frac{1}{5(t+1)}$.

  \item CIFAR-10 Dataset: The CIFAR-10 dataset has $50000$ images for training and $10000$ images for testing. The distribution of data samples is similar to the MNIST. Major training parameters are: local mini-batch size $s_i=40$, local epoch $E=5$ and learning rate $\eta_t=\frac{1}{10(t+1)}$.
\end{itemize}

\emph{2) Baselines:} We compare MSP-FL and MSPDQ-FL with the following baselines:
\begin{itemize}
  \item FedAvg \cite{fedavg}: Vanilla FL.
  \item FedAvg+LDP \cite{kangwei_fl+dp_tifs_2020}: We deploy LDP technique into FedAvg as a baseline of MSP-FL to compare the testing accuracy.
  \item JoPEQ: We deploy JoPEQ from \cite{joint} as a baseline to compare the performance of MSPDQ-FL.
\end{itemize}

\subsection{Numerical Results}
\textit{1) Learning performance of MSP-FL:} We begin by only focusing on evaluating the learning performance of MSP-FL and MSPDQ-FL in terms of training loss and testing accuracy under CIFAR-10 datasets. As a baseline, FedAvg+LDP is equipped with a privacy budget $\epsilon_{DP}=10$. For MSP-FL, the communication round after each learning round is set as $K_t=4$. While for MSPDQ-FL, the communication round is set as $K_t=10$. The communication bits for MSPDQ and JoPEQ are both $B=8$.
We observe from Fig. \ref{performance-comparison-MSP-MSPDQ-FL-on-cifar} that MSP-FL achieves similar performance to FedAvg. Both MSP-FL and MSPDQ-FL show better performance than that of FedAvg+LDP or JoPEQ across all epochs under the CIFAR-10 dataset. It indicates that our proposed privacy-preserving mechanism in MSP-FL and MSPDQ-FL sacrifices less model precision during the aggregation process. If FedAvg+LDP combines with SDQ for communication efficiency with a carefully designed noise (i.e., JoPEQ), it presents a relatively poor learning performance compared to our proposed MSPDQ-FL. This is not surprising because the power of noise introduced by quantization in MSPDQ-FL significantly decreases compared to that in JoPEQ, as the quantization error asymptotically decays in the sense of expectation with the shrinking quantization interval.


\begin{figure}[t]
    \subfloat[CIFAR-10 / Loss]{
    \begin{minipage}[t]{0.5\linewidth}
    \centering
    \vspace{-0pt}\hspace{-11.5pt}\includegraphics[scale=0.295]{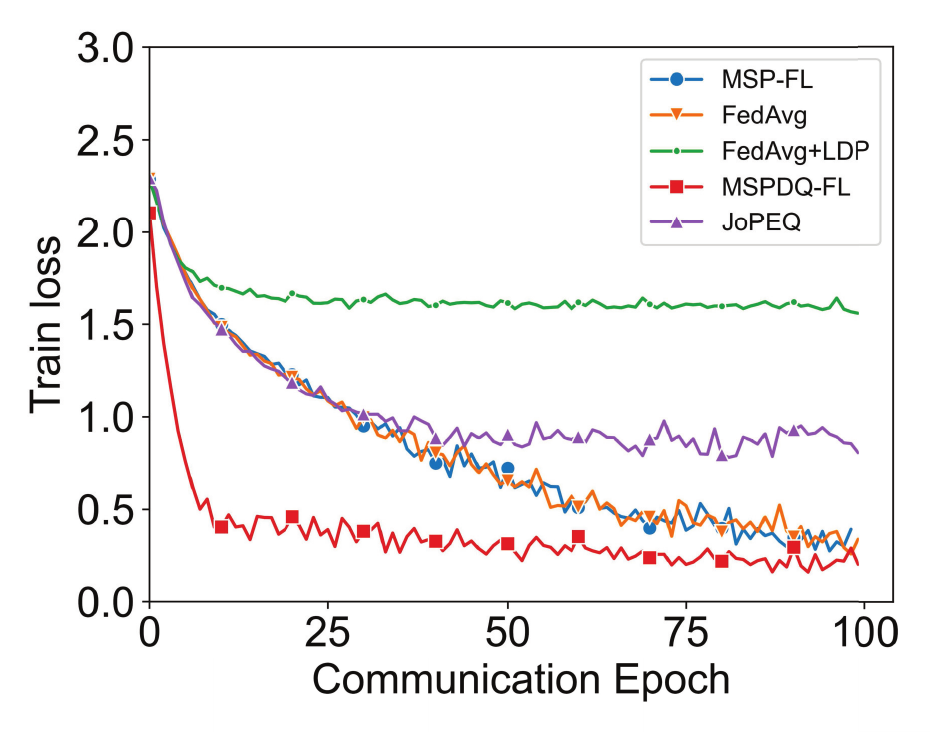}
    \label{mnist-trainloss}
    \end{minipage}%
    }%
    \subfloat[CIFAR-10 / Accuracy]{
    \begin{minipage}[t]{0.5\linewidth}
    \centering
    \vspace{-0pt}\hspace{-10pt}\includegraphics[scale=0.295]{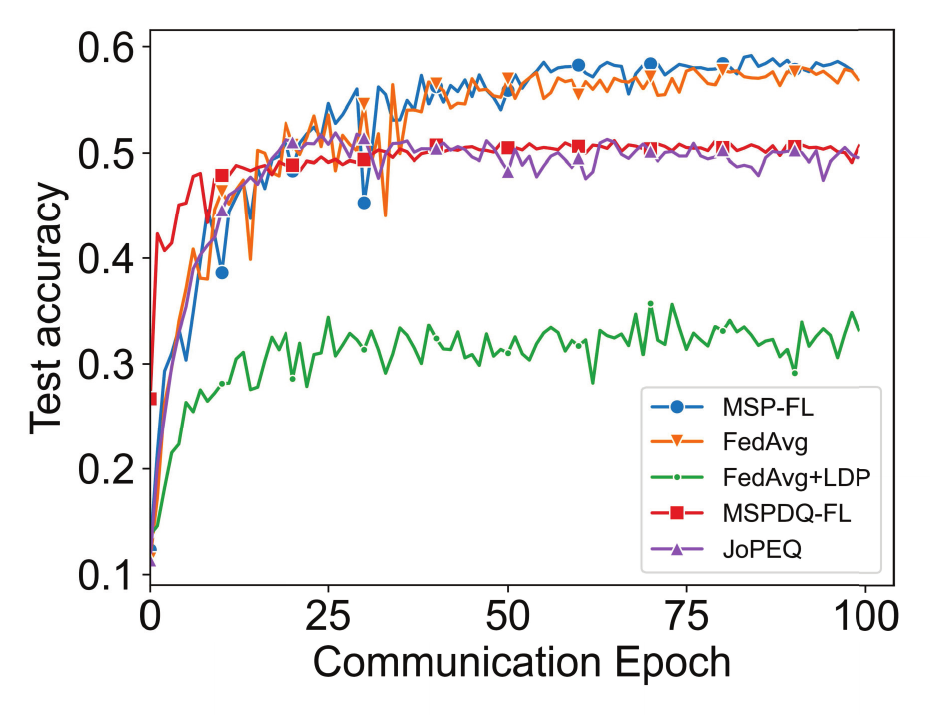}
    \label{mnist-testaccuracy}
    \end{minipage}
    }
    \caption{Comparing the learning performance of MSP-FL, MSPDQ-FL and baselines on CIFAR-10 dataset.}\label{performance-comparison-MSP-MSPDQ-FL-on-cifar}
\end{figure}

\begin{figure}[t]
    \subfloat[$K_t=20$ / MNIST]{
    \begin{minipage}[t]{0.5\linewidth}
    \centering
    \vspace{-0pt}\hspace{-12.8pt}\includegraphics[scale=0.29]{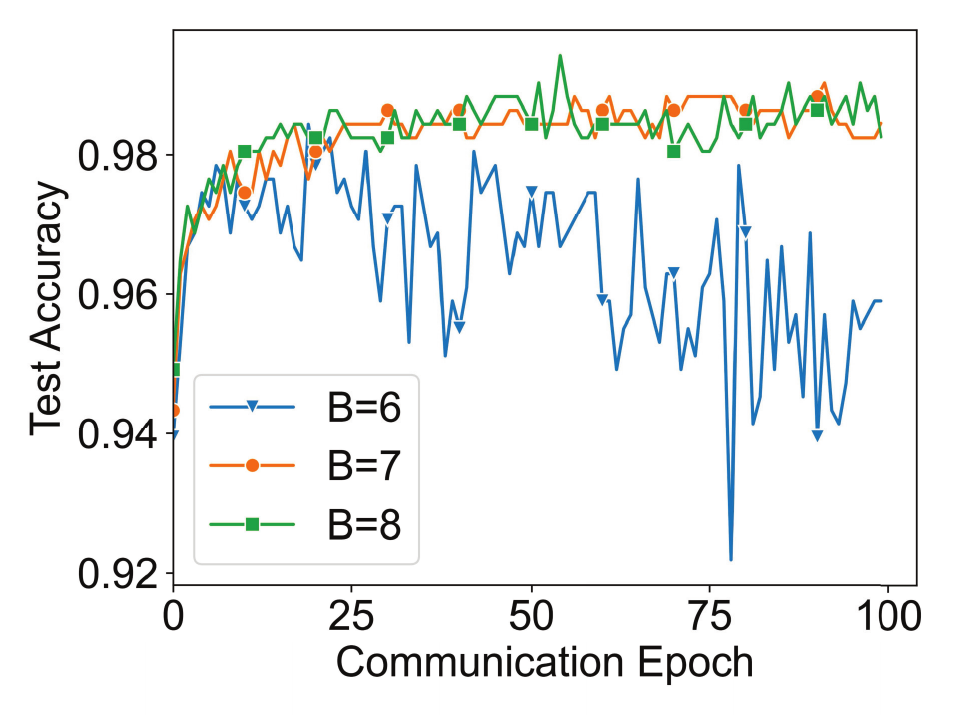}
    \label{K20-mnist}
    \end{minipage}%
    }%
    \subfloat[$K_t=30$ / MNIST]{
    \begin{minipage}[t]{0.5\linewidth}
    \centering
    \vspace{-0pt}\hspace{-12pt}\includegraphics[scale=0.29]{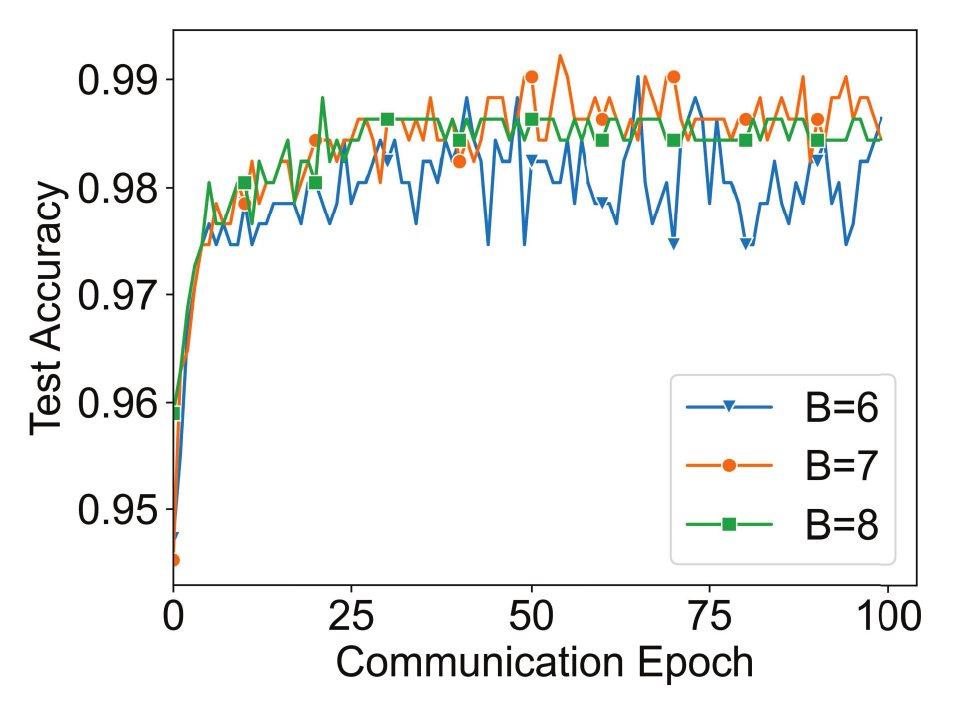}
    \label{K30-mnist}
    \end{minipage}
    }\\
    \vspace{-0pt}\subfloat[$K_t=20$ / CIFAR-10]{
    \begin{minipage}[t]{0.5\linewidth}
    \centering
    \vspace{-0pt}\hspace{-10pt}\includegraphics[scale=0.29]{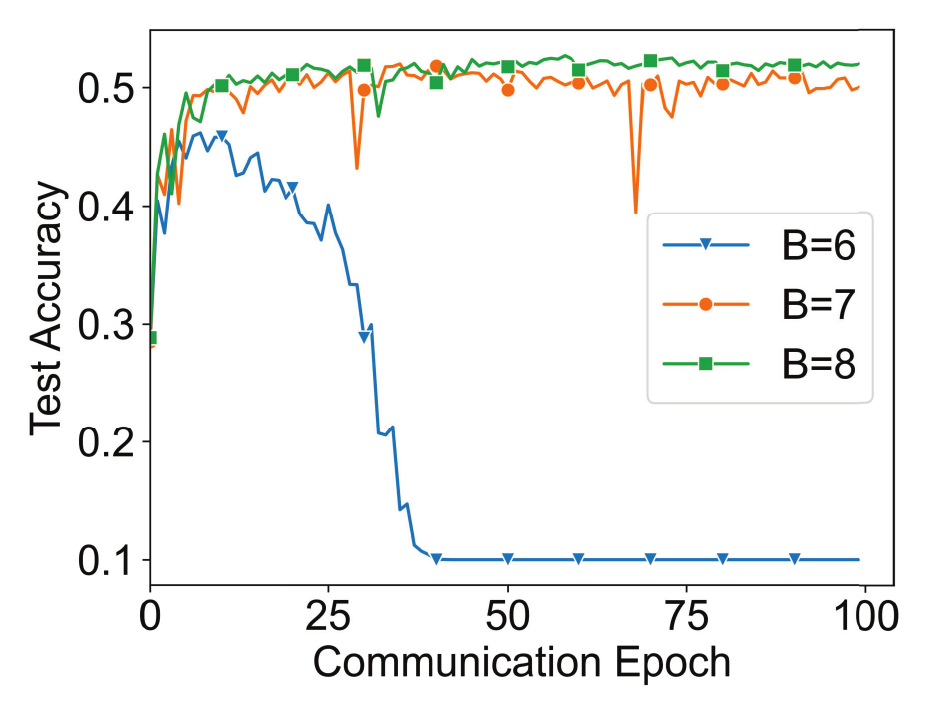}
    \label{K20-cifar}
    \end{minipage}
    }
    \subfloat[$K_t=30$ / CIFAR-10]{
    \begin{minipage}[t]{0.5\linewidth}
    \centering
    \vspace{-0pt}\hspace{-13pt}\includegraphics[scale=0.29]{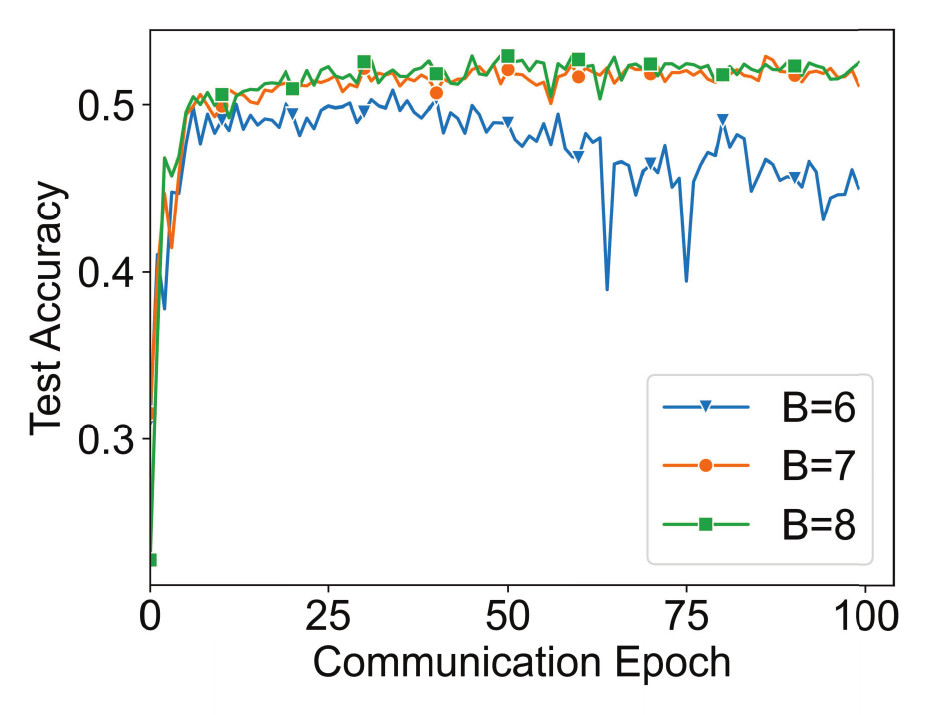}
    \label{K30-cifar}
    \end{minipage}
    }
    \caption{Comparing the learning accuracy of MSPDQ-FL with different communication rounds and quantization bits.}\label{performance-comparison-different-k-b}
\end{figure}
\begin{figure}[t]
\centering
    \includegraphics[scale=0.4]{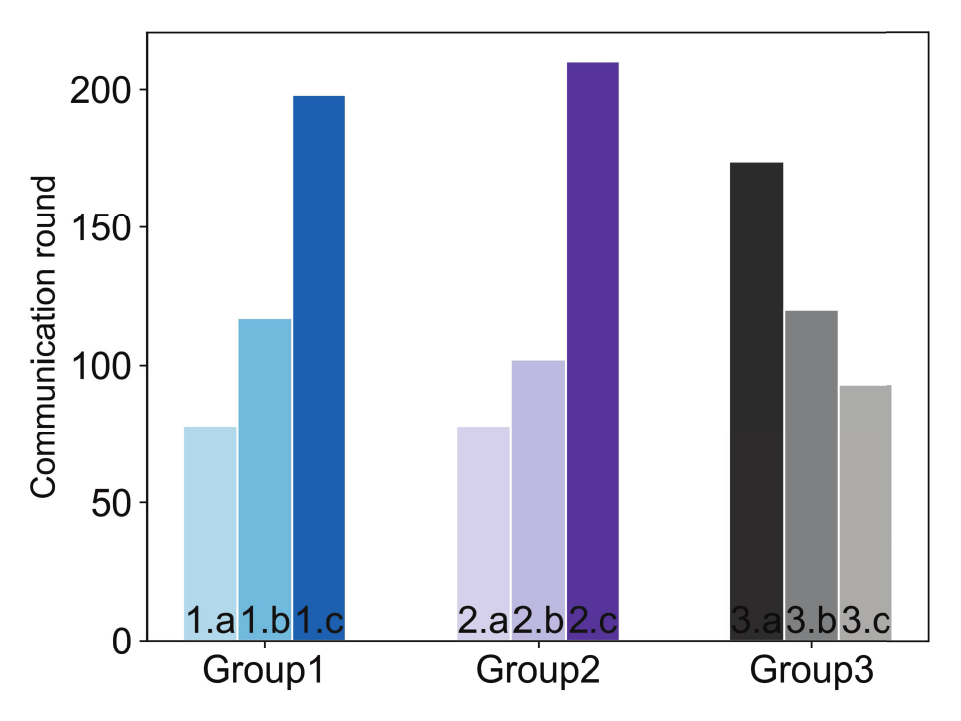}
    \vspace{0pt}\caption{The impact of different dominated coefficients on the communication complexity of MSPDQ-FL. Group 1 is dominated by prescribed learning error $\|\mathbf{w}_t-\mathbf{w}^*\|$, where the error ratio in 1.a, 1.b and 1.c is $4:3:2$. Group 2 is dominated by communication bits $B$, where $B_{2.a}=8$, $B_{2.b}=7$ and $B_{2.c}=6$. Group 3 is dominated by model-splitting coefficient $\varepsilon$, where $\varepsilon_{3.a}=0.3$, $\varepsilon_{3.b}=0.5$ and $\varepsilon_{3.c}=0.7$.}\label{com-complexity}
\end{figure}

             \begin{figure}[t]
                \subfloat[$a_{i,\alpha\beta_1}{[}k{]}=\frac{\gamma_{\max}}{k+1}$ / Accuracy ]{
                \begin{minipage}[t]{0.5\linewidth}
                \centering
                \hspace{-10pt}\includegraphics[width=1.81in,height=1.3in]{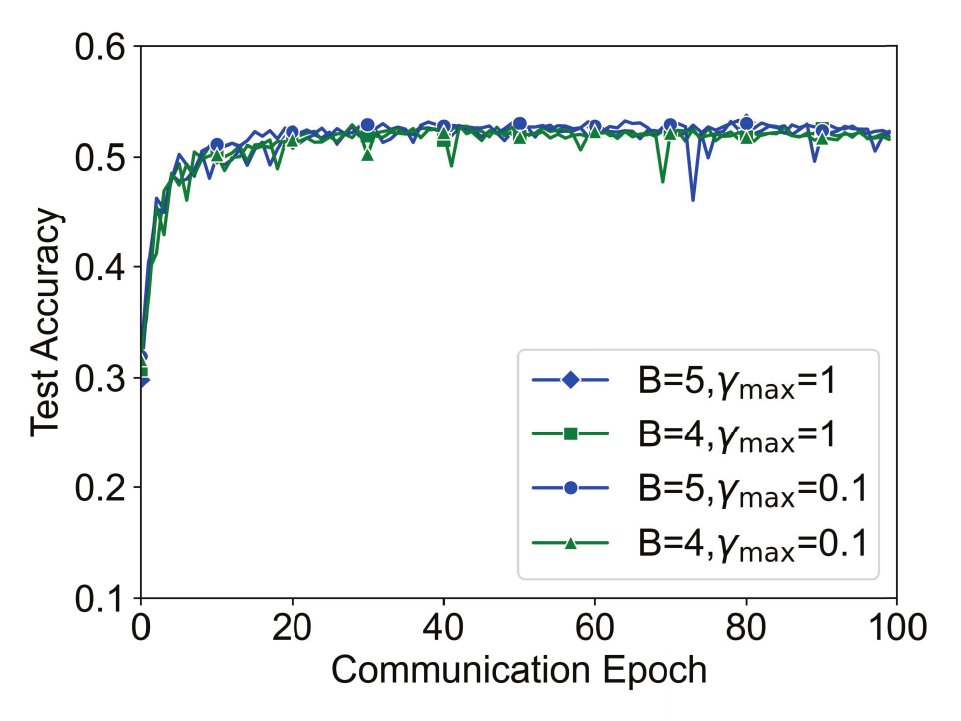}
                \label{hyperparameter_1/k_accuracy}
                \end{minipage}%
                }%
                \subfloat[$a_{i,\alpha\beta_1}{[}k{]}=\frac{\gamma_{\max}}{k+1}$ / Loss ]{
                \begin{minipage}[t]{0.5\linewidth}
                \centering
                \hspace{-10pt}\includegraphics[width=1.81in,height=1.3in]{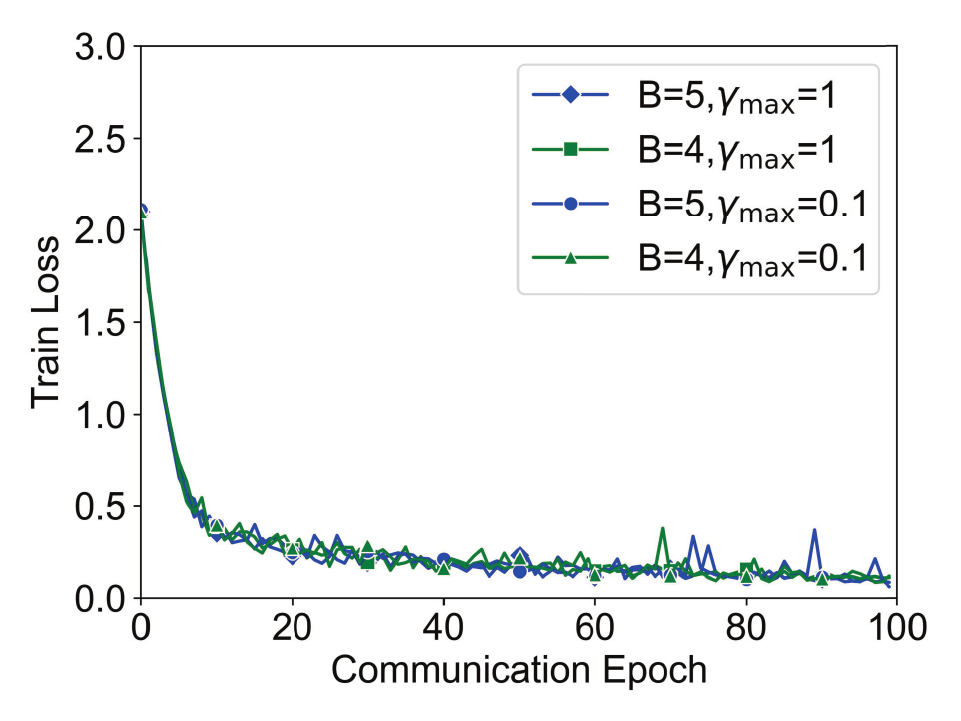}
                \label{hyperparameter_1/k_loss}
                \end{minipage}
                }\\
                \subfloat[$a_{i,\alpha\beta_1}{[}k{]}=\frac{\gamma_{\max}}{\sqrt{k+1}}$ / Accuracy ]{
                \begin{minipage}[t]{0.5\linewidth}
                \centering
                \hspace{-10pt}\includegraphics[width=1.81in,height=1.3in]{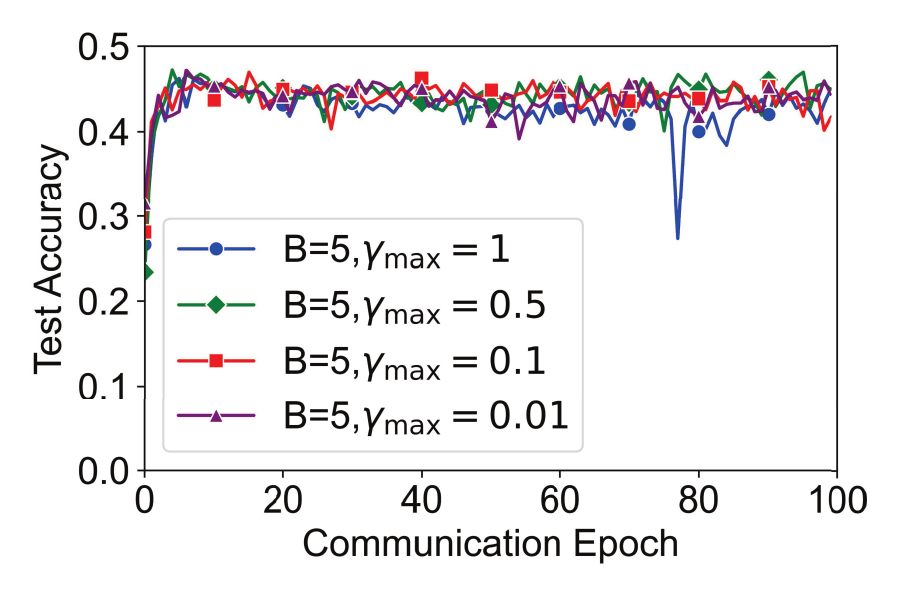}
                \label{hyperparameter_sqrt_accuracy}
                \end{minipage}%
                }%
                \subfloat[$a_{i,\alpha\beta_1}{[}k{]}=\frac{\gamma_{\max}}{\sqrt{k+1}}$ / Loss]{
                \begin{minipage}[t]{0.5\linewidth}
                \centering
                \hspace{-10pt}\includegraphics[width=1.81in,height=1.3in]{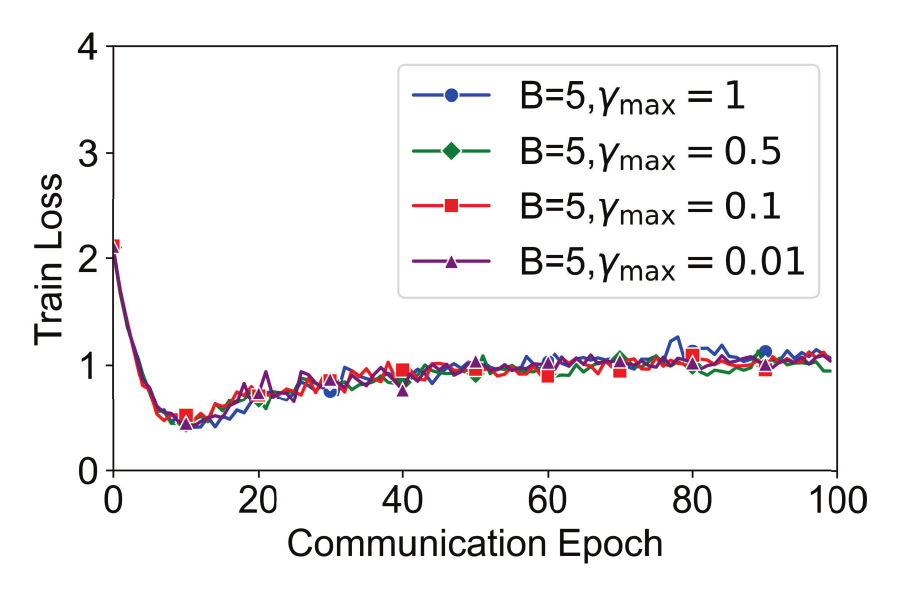}
                \label{hyperparameter_sqrt_loss}
                \end{minipage}
                }
                \caption{Learning performance of MSPDQ-FL on CIFAR-10 dataset with $K_t=20$ and different hyperparameters.}\label{hyperparameter}
            \end{figure}

\textit{2) Learning performance and communication complexity of MSPDQ-FL:} In Fig. \ref{performance-comparison-different-k-b}, we show the convergence performance of MSPDQ-FL under different $K_t$ and $B$. Since the performance of MSP-FL under different $K_t$ is similar to that of MSPDQ-FL, it is omitted. In the experiment, we notice that the value of $K_t$ is mainly governed by a small strongly-convex coefficient $\mu$ as shown in Theorem 3 (e.g., set as $10^{-2}$), hence, we set most $K_t$ as constant. We observe in Fig. \ref{K20-mnist} and Fig. \ref{K20-cifar} that the test accuracy is normal in all cases during initial epochs. At around the $10$th epoch, the test accuracy degrades under $B=6$ in Fig. \ref{K20-cifar}, resulting from the fact that $K_t=20$ is insufficient for the case under 6 bits as $t$ increases. The explanation for such a fact is that, on one hand, it requires sufficient communication rounds for aggregation to reduce the error caused by model splitting; on the other hand, it needs a sufficiently large $K_t$ to wait for the quantization interval to shrink into a small range, especially when the number of quantization bits is small.

In Fig. \ref{com-complexity}, we pick a few intuitively dominated coefficients to see their impacts on the communication complexity of MSPDQ-FL. The results are reasonable and in accordance with the derived upper bound of communication complexity in Proposition 3.

{In Fig. \ref{hyperparameter}, we further investigate how sensitive the learning performance of MSPDQ-FL is to the major hyperparameters in shrinking  quantization intervals. We set $\pi_{t}=\frac{8(\epsilon+\tilde{\epsilon}+\tilde{\epsilon}\widetilde{W}_{t,k}^{\max})}{1-\lambda_{2,\mathbf{U}}}$, where $\widetilde{W}_{t,k}^{\max}\overset{\triangle}{=}\max\{\|\mathbf{W}_{t}^\alpha[k_1]-\mathbf{W}_{t}^{\beta_1}[k_1]\||k_1\in\{0,...,k\}\}$
             iteratively finds its maximum value. The constant parameter $\gamma_{\max}$ in $a_{i,\alpha\beta_1}[k]\overset{\triangle}{=}\frac{\gamma_{\max}}{k+1}$ is modified to show the impact of the dynamic quantization interval sizes on the learning performance. Moreover, considering the potential impact of the shrinking rate of the quantization intervals, we add a comparison result by setting $a_{i,\alpha\beta_1}[k]=\frac{\gamma_{\max}}{\sqrt{k+1}}$. Given a fixed rule for $a_{i,\alpha\beta_1}[k]$, we can observe from Fig. \ref{hyperparameter} that the parameter $\gamma_{\max}$, that influences the quantization interval sizes, has little impact on the learning performance in each subfigure. By comparing Fig. \ref{hyperparameter_1/k_accuracy} with Fig. \ref{hyperparameter_sqrt_accuracy} or comparing Fig. \ref{hyperparameter_1/k_loss} with Fig. \ref{hyperparameter_sqrt_loss}, one can see a decline of learning performance with the decaying factor $a_{i,\alpha\beta_1}[k]=\frac{\gamma_{\max}}{\sqrt{k+1}}$ for shrinking quantization intervals. This is because, under fixed quantization bits, quantization intervals $\mathcal{R}_t^i[k+1]$ with a slower decaying rate result in relatively larger quantization errors.
\begin{figure}[t]
    \subfloat[The proposed methods]{
    \begin{minipage}[t]{0.5\linewidth}
    \centering
    \hspace{-10pt}\includegraphics[width=1.67in,height=1.25in]{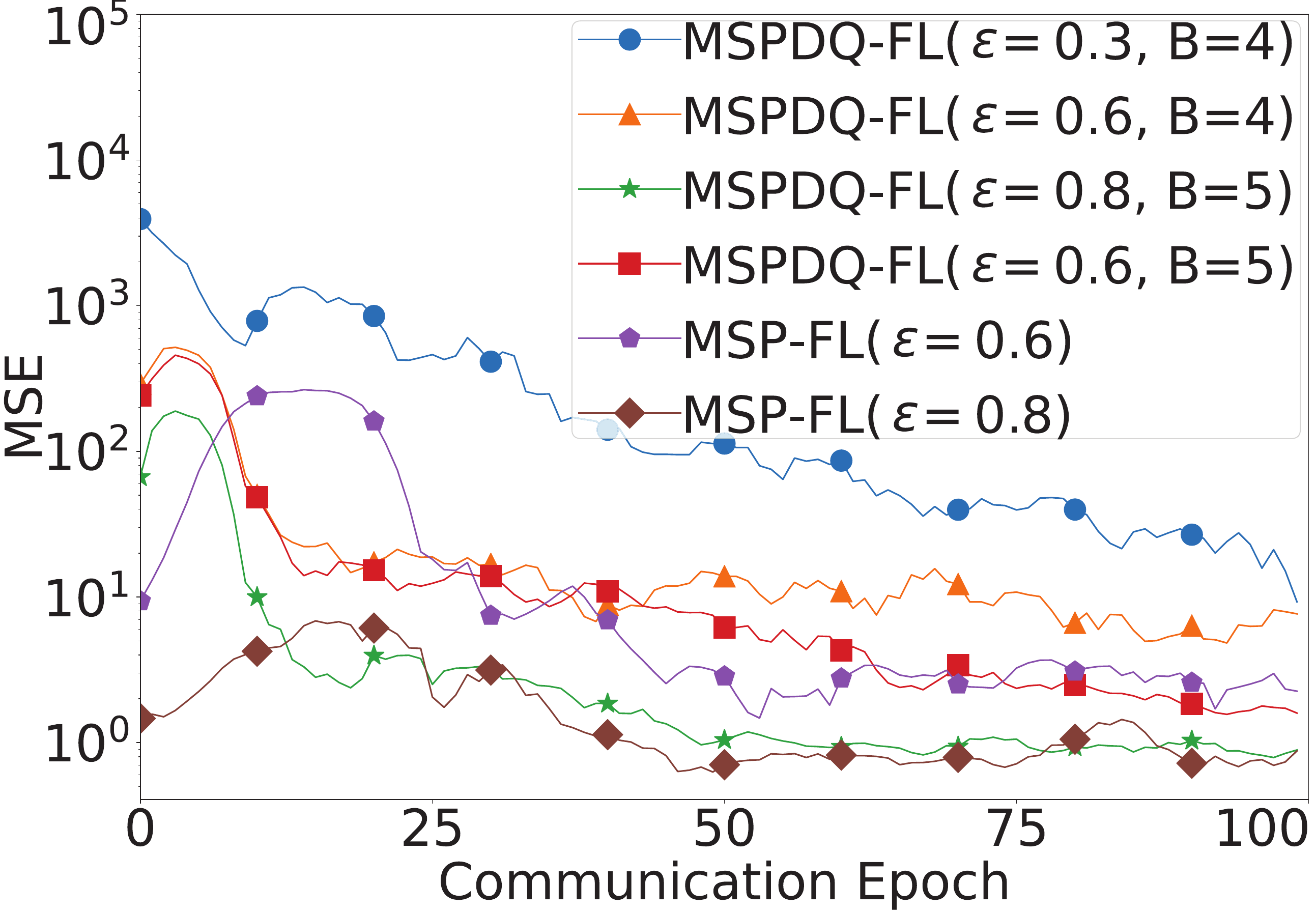}
    \label{msp_mse}
    \end{minipage}%
    }%
    \subfloat[DP-guaranteed methods]{
    \begin{minipage}[t]{0.5\linewidth}
    \centering
     \hspace{-12pt}\includegraphics[width=1.67in,height=1.25in]{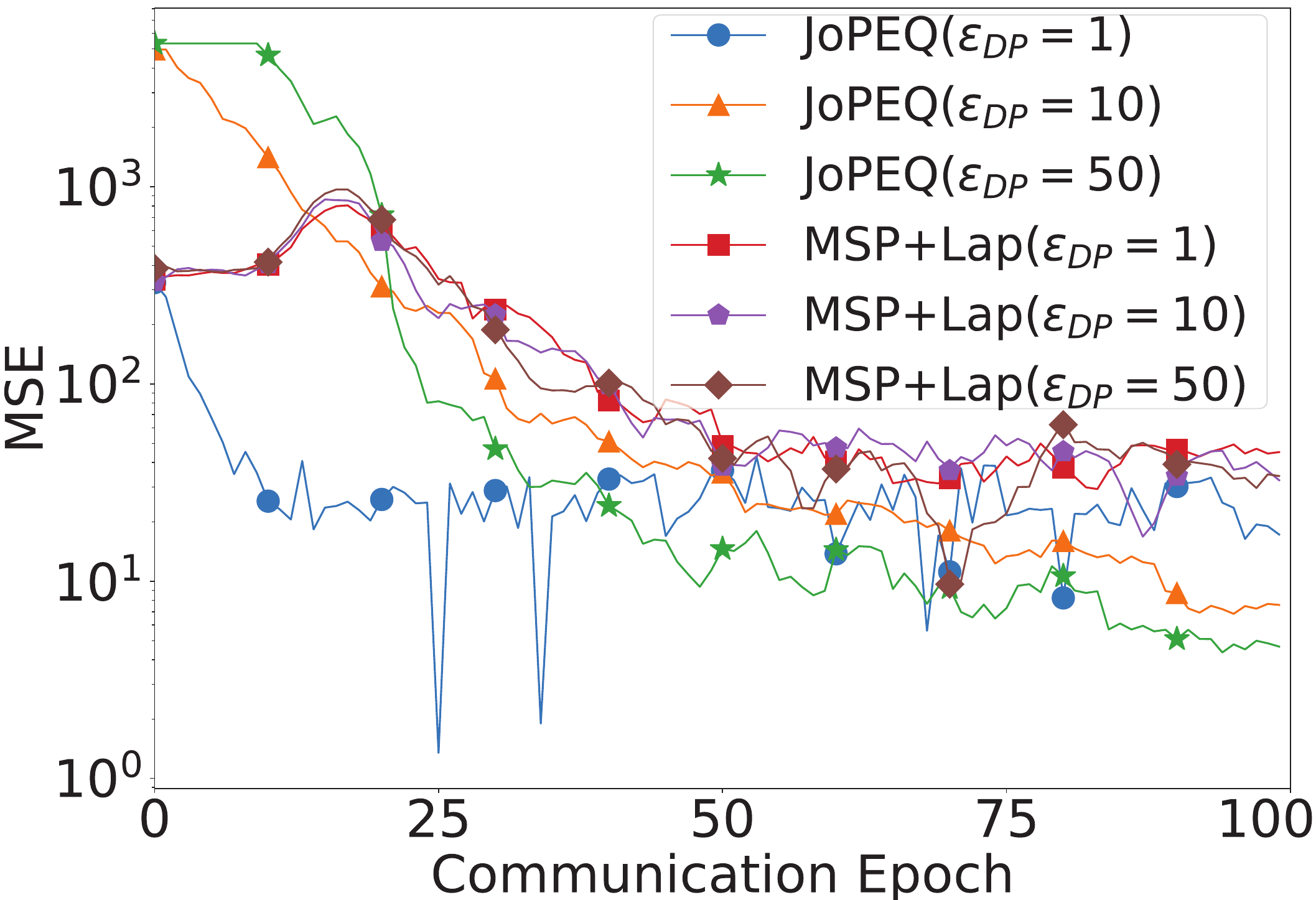}
    \label{jopeq_mse}
    \end{minipage}
    }\\
    \subfloat[The proposed methods]{
    \begin{minipage}[t]{0.5\linewidth}
    \centering
    \hspace{-10pt}\includegraphics[width=1.67in,height=1.25in]{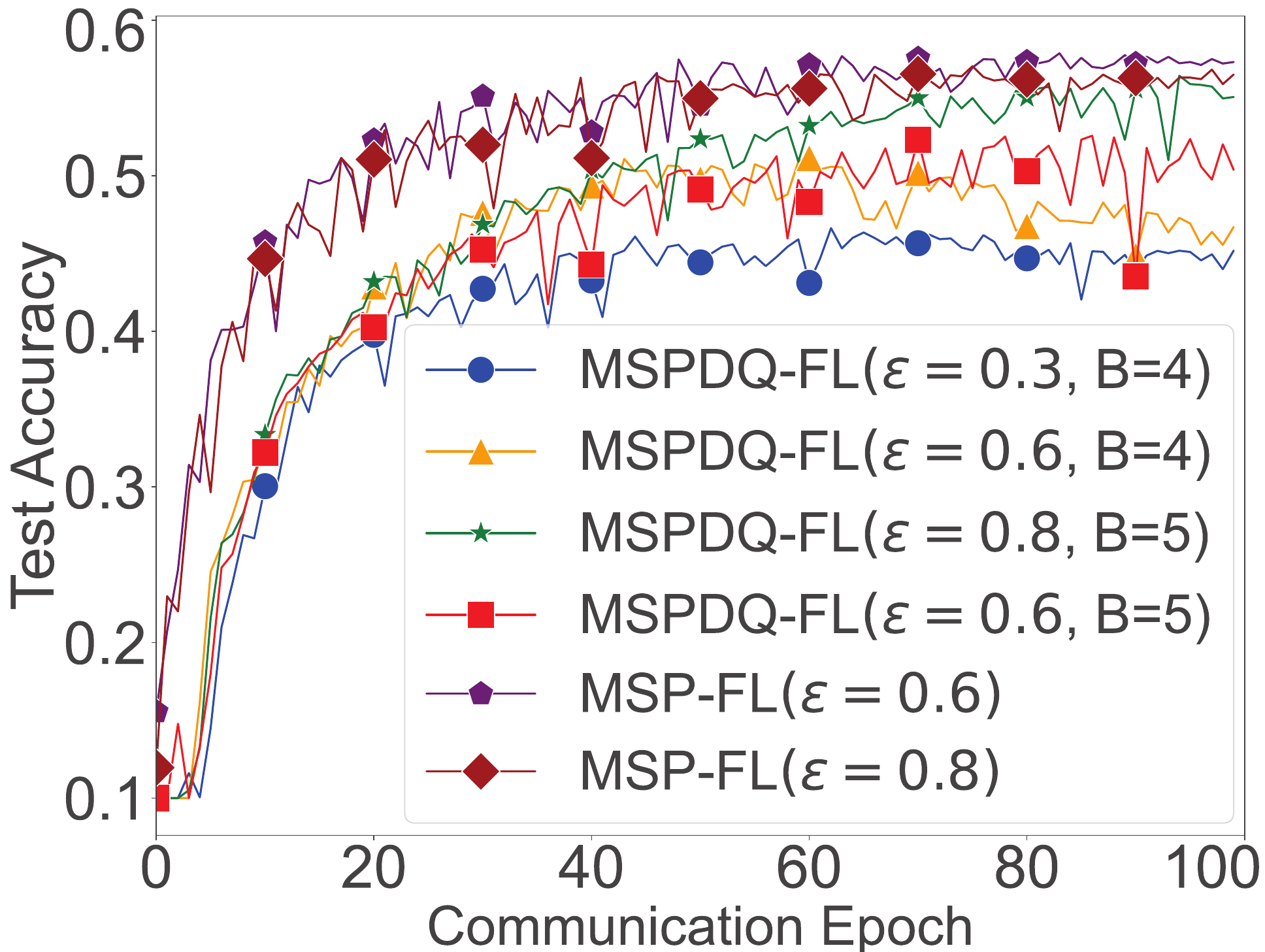}
    \label{msp_acc}
    \end{minipage}%
    }%
    \subfloat[DP-guaranteed methods]{
    \begin{minipage}[t]{0.5\linewidth}
    \centering
     \hspace{-12pt}\includegraphics[width=1.67in,height=1.25in]{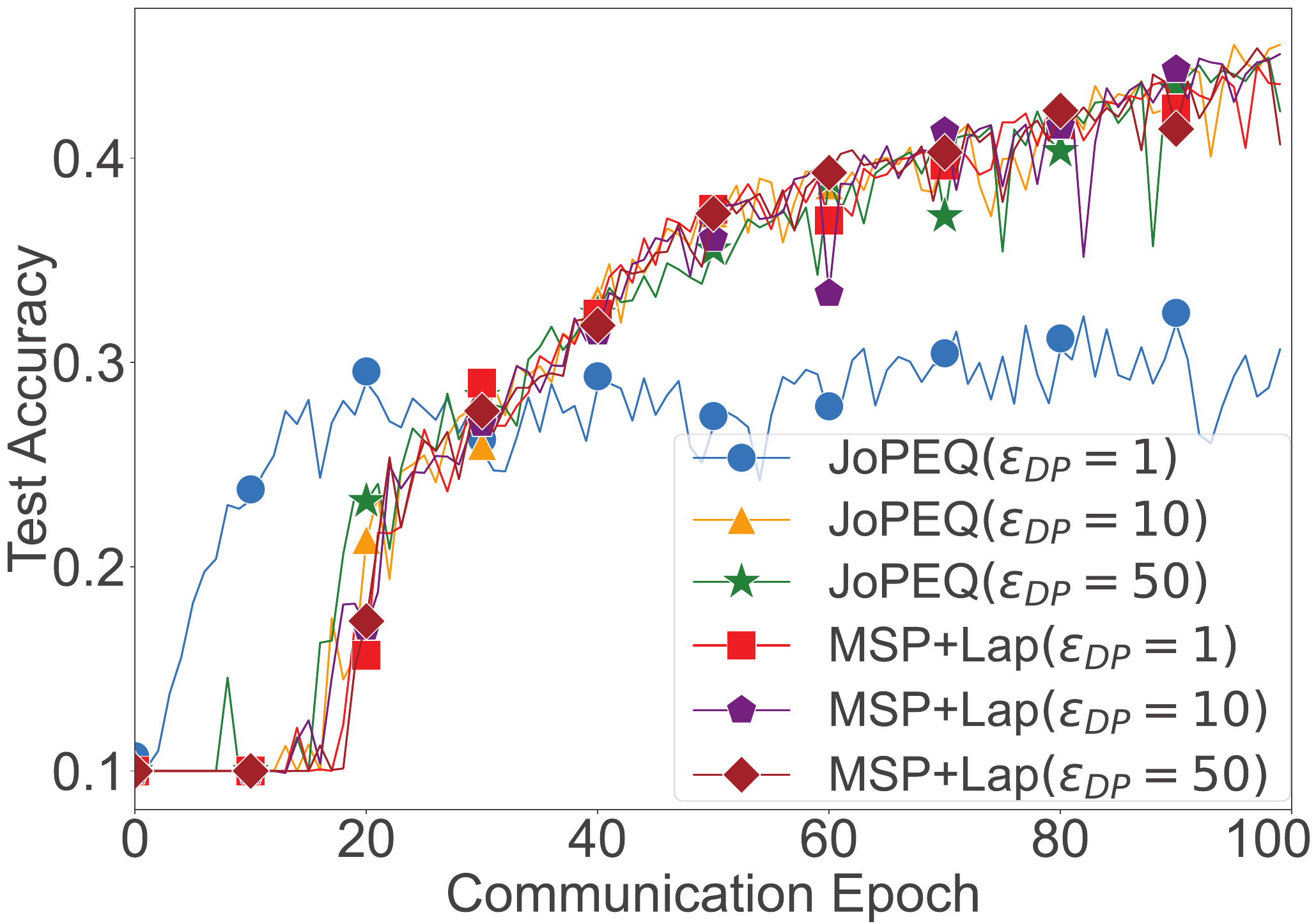}
    \label{jopeq_acc}
    \end{minipage}
    }
    {\caption{MSE defense score and the corresponding learning accuracy under MSP-FL, MSPDQ-FL, MSP+Lap and JoPEQ.}}\label{mse_score}
\end{figure}

\textit{3) Privacy leakage mitigation evaluation:} Here, we empirically verify the privacy guarantees of MSP-FL and MSPDQ-FL by assessing the defense performance against model inversion attacks. The attacks are implemented based on iDLG mechanism proposed in \cite{idlg}. The setup including the image classification task, model architecture, dataset and attack parameters, follows the one in \cite{idlg}. To evaluate the privacy leakage mitigation, mean squared error (MSE) is used to measure the discrepancy between the reconstructed images and truth label as the communication epoch progresses. Regarding the parameter setting, note that the initial values of submodels in the model splitting procedure are influenced by the splitting factor $\epsilon$, hence we treat this factor as analogous to the privacy budget $\epsilon_{DP}$ in differential privacy although this has not yet been fully substantiated through rigorous theoretical analysis. Therewith, Fig. \ref{msp_mse} presents the MSE defense score achieved by MSP-FL and MSPDQ-FL against iDLG under different splitting factors and quantization bits. As a comparison, Fig. \ref{jopeq_mse} reports the MSE defense scores achieved by DP-based method (JoPEQ) under different privacy budgets $\epsilon_{DP}$. The results indicate that, under the current parameter settings, our proposed methods provide a level of privacy protection comparable to that of DP-based method. Fig. \ref{msp_mse} further demonstrates that, due to quantization noise, MSPDQ-FL brings stronger privacy protection than MSP-FL. As the number of quantization bits decreases, the MSE naturally increases. This is consistent with the existing conclusion, i.e., the quantization enhances privacy preservation. 

Considering that the comparison between the proposed model splitting methods and the differential privacy method is somewhat unfair, even though our intention is only to demonstrate that the proposed methods provides a certain level of privacy protection rather than greater privacy strength than differential privacy methods, we supplement the MSE defense scores of MSP-FL under the splitting rule that guarantees differential privacy property for initial values of submodels in Fig. 6b. As discussed in Section \uppercase\expandafter {\romannumeral3}-C, there exists a splitting rule that ensures the noise injected into the initial submodel parameters follows a Laplace distribution. For simplicity, we denote the MSP-FL method with Laplace distribution for the injected noise as MSP+Lap. MSE scores of MSP+Lap in Fig. 6b demonstrate a matched privacy preservation level to DP-based method (JoPEQ). It is seen that the MSE scores of MSP+Lap exhibit consistency, and are slightly higher than MSE scores of JoPEQ. Intuitively, this is because the Laplace noise injected via model splitting can be amplified by the discrepancies among local submodels during the dynamics as in \eqref{decomposition1a} and \eqref{decomposition1b}.
 
We further test the learning accuracy of the abovementioned privacy-preserving methods under identical hyper-parameter settings to see the privacy-accuracy trade-off. The simulation results presented in Fig. 6a-6d, especially the blue curves (MSPDQ-FL with $\epsilon=0.3, B=4$ and JoPEQ with $\epsilon_{DP}=1$), show that our proposed model splitting method does not suffer as severe learning accuracy degradation as traditional noise-adding methods while maintaining comparable privacy protection effects. For a fair comparison, the results in Fig. 6d show that MSP+Lap also achieves similar or better learning accuracy to JoPEQ across various $\epsilon_{DP}$ values. Notably, as illustrated in Fig. 6d, learning accuracy of MSP+Lap remains largely unaffected even when noise intensity increases to a privacy budget of $\epsilon_{DP}=1$. This robustness stems from the zero-sum noise requirement in model splitting mechanism and the well-designed dynamics for the interaction of submodels, which mitigates the privacy-utility trade-off in the conventional noise-adding methods.

\begin{figure}[t]
    \subfloat[$K_t=10, \gamma_{\max}=0.1$ ]{
    \begin{minipage}[t]{0.5\linewidth}
    \centering
    \hspace{-12pt}\includegraphics[width=1.81in,height=1.3in]{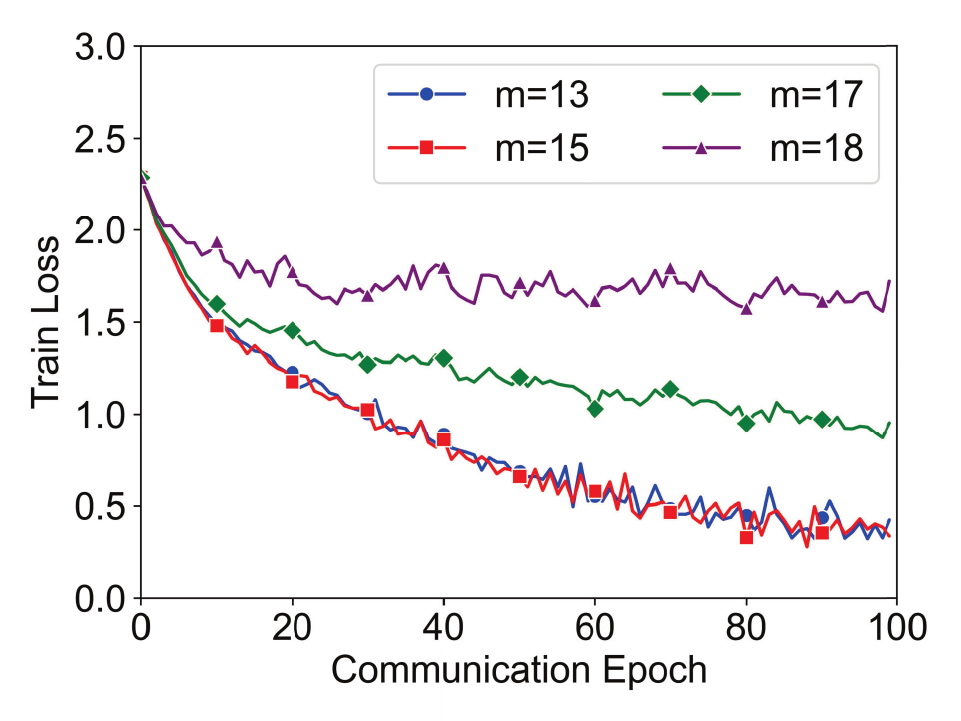}
    \label{k10_0.1varying_m_loss}
    \end{minipage}%
    }%
    \subfloat[$K_t=10, \gamma_{\max}=0.1$ ]{
    \begin{minipage}[t]{0.5\linewidth}
    \centering
    \hspace{-12pt}\includegraphics[width=1.81in,height=1.3in]{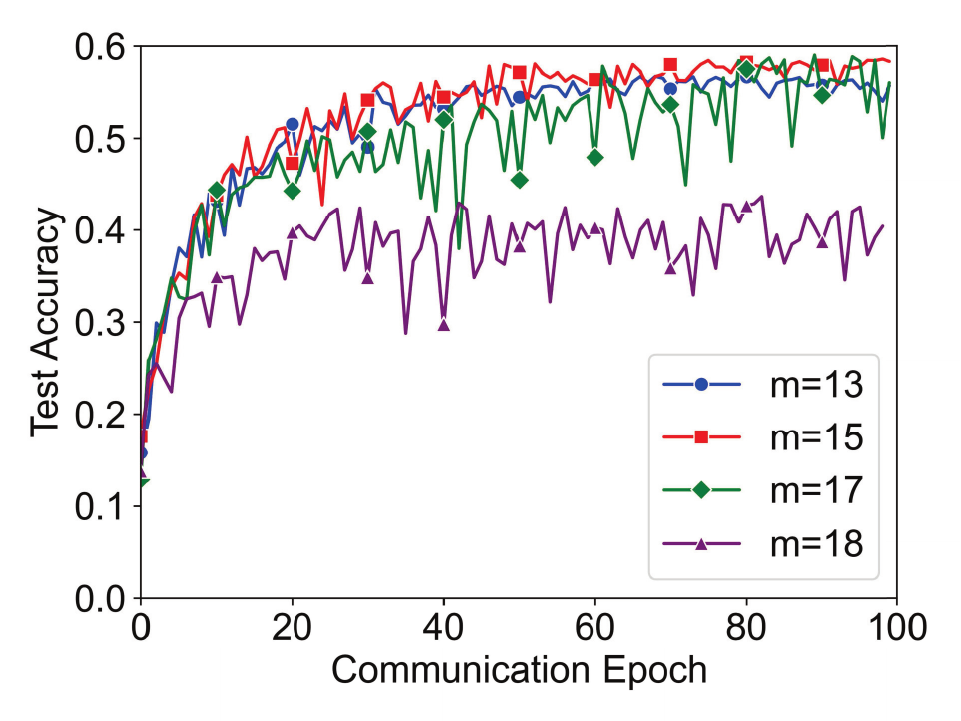}
    \label{k10_0.1varying_m_accuracy}
    \end{minipage}
    }\vspace{0pt}\\
    \subfloat[$K_t=4, \gamma_{\max}=0.1$ ]{
    \begin{minipage}[t]{0.5\linewidth}
    \centering
    \hspace{-12pt}\includegraphics[width=1.81in,height=1.31in]{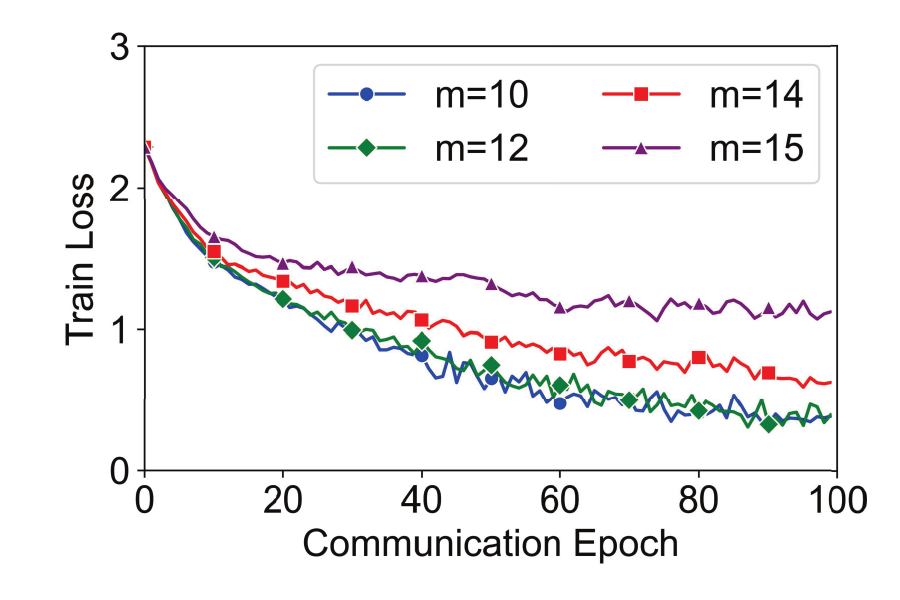}
    \label{k4_0.1varying_m_loss}
    \end{minipage}%
    }%
    \subfloat[$K_t=4, \gamma_{\max}=0.1$]{
    \begin{minipage}[t]{0.5\linewidth}
    \centering
    \hspace{-12pt}\includegraphics[width=1.81in,height=1.31in]{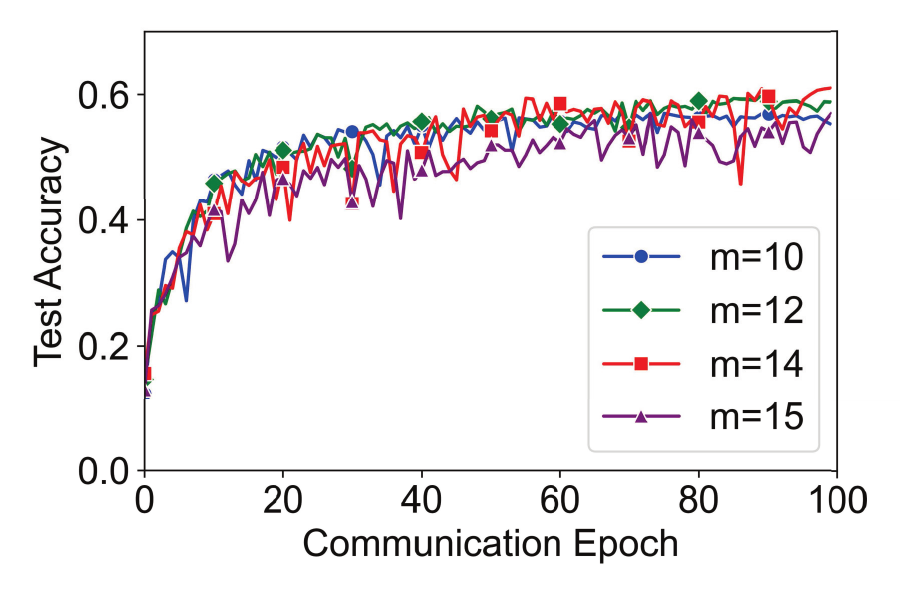} \label{k4_0.1varying_m_accuracy}
    \end{minipage}
    }\\
    \subfloat[$K_t=10, \gamma_{\max}=0.3$ ]{
    \begin{minipage}[t]{0.5\linewidth}
    \centering
    \hspace{-12pt}\includegraphics[width=1.81in,height=1.3in]{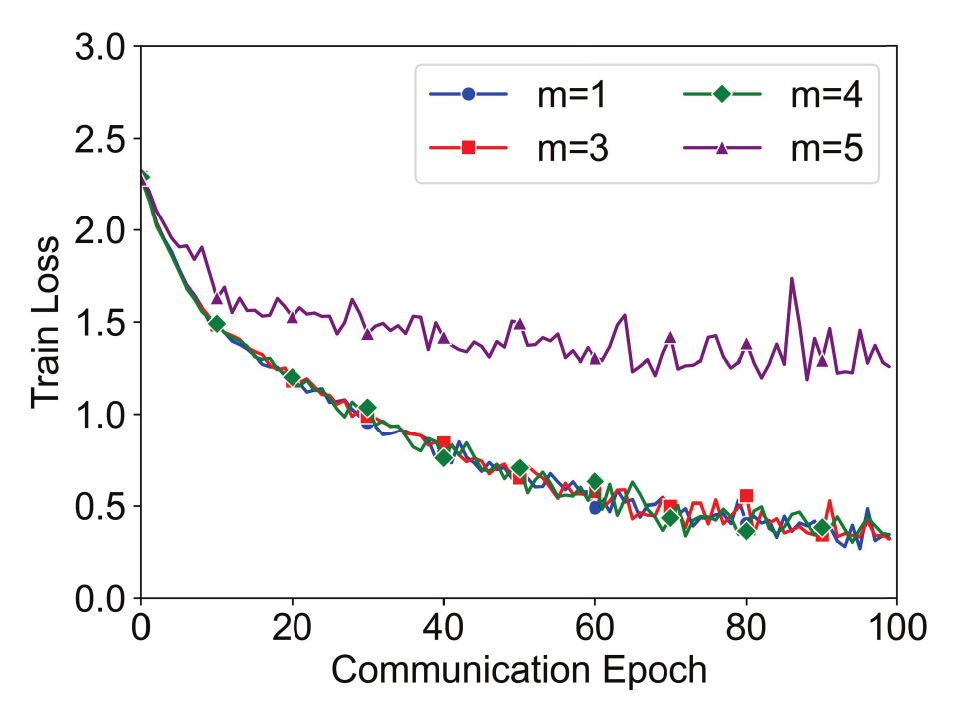}
    \label{k10_0.3varying_m_loss}
    \end{minipage}%
    }%
    \subfloat[$K_t=10, \gamma_{\max}=0.3$ ]{
    \begin{minipage}[t]{0.5\linewidth}
    \centering
    \hspace{-12pt}\includegraphics[width=1.81in,height=1.3in]{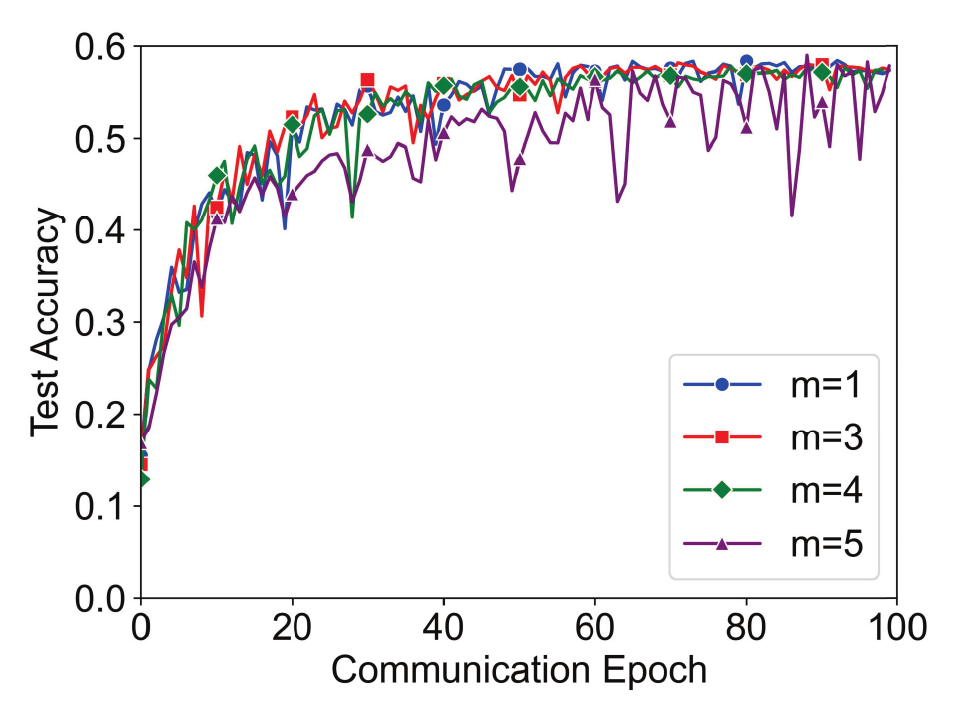}
    \label{k10_0.3varying_m_accuracy}
    \end{minipage}
    }
    \caption{Comparing the learning performance of MSP-FL on CIFAR-10 dataset with varying sizes of invisible submodels.}\label{varying_m}
\end{figure}
{\textit{4) Impact of the number of invisible submodels $m_i$:} Our subjective speculation for the impact is that, with the premise of ensuring the learning accuracy, the value of $m$ affects the least number of communication rounds $K_t$ and the interaction weight $a_{i,\alpha\beta_n}[k]$ for all $i\in\mathcal{S}_t$, $n\in\{1,2,...,m_i\}$. Given a fixed communication round $K_t$ and the parameter $\gamma_{\max}\overset{\triangle}{=}\max\{\gamma_1,...,\gamma_N\}$ (which is a key parameter in the formulation of interaction weights $a_{i,\alpha\beta_n}[k]$), Fig. \ref{k10_0.1varying_m_loss} and Fig. \ref{k10_0.1varying_m_accuracy} report the learning performance of MSP-FL with varying sizes of invisible submodels. As the proportion of invisible submodels increases, the results indicate an apparent reduction in learning performance when the maximal number of invisible submodels reaches a critical point at $m=18$. That is, the global model quality is overly degraded if $m\geq 18$ under the current settings. Setting $K_t=10$ and $\gamma_{\max}=0.1$ as a benchmark, we further explore the impact of varying invisible submodel sizes $m$ on the communication rounds $K_t$ and interaction weight $a_{i,\alpha\beta_n}[k]$. We can observe from Fig. \ref{k4_0.1varying_m_loss} and Fig. \ref{k4_0.1varying_m_accuracy} that the maximal number of invisible submodels currently allowed is close to that in Fig. \ref{k10_0.1varying_m_loss} and Fig. \ref{k10_0.1varying_m_accuracy} with $K_t=10$. This is because $K_t=4$ is nearly the least number of communication rounds that suffices for the aggregation error to be suppressed. In Fig. \ref{k10_0.3varying_m_loss} and Fig. \ref{k10_0.3varying_m_accuracy}, we increase the key parameter $\gamma_{\max}$ in the interaction weight $a_{i,\alpha\beta_n}[k]$ for $i\in\mathcal{S}_t$ to $0.3$, while keeping $K_t=10$. In this case, the maximal number of invisible submodels allowed significantly decreases to $m=4$. The result is consistent with the discussion in Section \uppercase\expandafter {\romannumeral5-C}. Similar results can be obtained for MSPDQ-FL with varying invisible submodel sizes.

\section{Conclusion}
This paper introduced an MSPDQ algorithm to solve both of privacy and communication efficiency issues in FL. Due to the incompletely randomized property of model splitting method, a new privacy notion is used to evaluate the privacy degree. Model splitting method averts the privacy-utility tradeoff via dedicated designs for correlated noises and interaction protocol for submodels. Moreover, in contrast to LDP mechanism (e.g., JoPEQ), our proposed scheme not only saves communication bandwidth benefiting from dynamic quantization, but also reduces the total communication rounds under the same learning accuracy. The theoretical results showed that both the proposed MSP-FL and MSPDQ-FL preserve the privacy, and ensure the convergence to the optimal model in the sense of expectation with an $\mathcal{O}(1/t)$ rate. In addition, we explored sufficient conditions for the DP guarantee under model splitting mechanism, and proved that the proposed dynamic quantizer in MSPDQ-FL achieves at least $(0,\delta)$-differential privacy. Future works include analyzing the tradeoff between the privacy level and communication cost, and extending the theoretical results without strong convexity and bounded gradients assumptions.

\appendices

\section{Proof of Theorem 2}\label{A}

\subsection{Notations}
The updating rule \eqref{decomposition1} is rewritten in a compact form as
\begin{equation}\label{notation1}
  \mathbf{W}_{t}[k+1]=\mathbf{P}[k]\mathbf{W}_{t}[k]
\end{equation}
where
\begin{equation*}\label{notation2}
  \mathbf{W}_{t}[k]
  \hspace{-2pt}=\hspace{-2pt}\left(\hspace{-3pt}
    \begin{array}{c}
     \mathbf{W}_{t}^1[k]^{\top} \\
       \vdots\\
      \mathbf{W}_{t}^{2M}[k]^{\top}
    \end{array}
  \right)
 \hspace{-2pt} =\hspace{-2pt}\left(
    \begin{array}{c}
     \text{col}( \mathbf{w}_{t}^{i,\alpha}[k]^{\top}|{i\in\mathcal{S}_{t}} )\\
     \text{col}( \mathbf{w}_{t}^{i,\beta_1}[k]^{\top}|{i\in\mathcal{S}_{t}} )
    \end{array}
  \right),
\end{equation*}

\begin{equation}\label{notation3}
    \mathbf{P}[k]=\left[
        \begin{array}{cc}
          \mathbf{U}-\mathbf{A}[k] & \mathbf{A}[k]  \\
           \mathbf{A}[k]  & \mathbf{I}-\mathbf{A}[k]
        \end{array}
    \right],
\end{equation}

  \begin{align}\label{notation4}
  {U}_{ij}=\left\{
  \begin{array}{cc}
    \frac{\epsilon}{M}, & \text{if} \; j\neq i, \\
    1-\frac{\epsilon(M-1)}{M}, & \text{otherwise,}
  \end{array}\right.
  \end{align}
$\mathbf{A}[k]=\text{diag}(a_{i,\alpha\beta_1}[k]|{i\in\mathcal{S}_{t}})$. In the above definitions, with a slight abuse of notation, we use $\mathbf{W}_{t}^i[k]^{\top}$ to denote a row vector in the $i$-th row of matrix $\mathbf{W}_{t}[k]$.

The overall timeline is set according to the SGD iteration step instead of the communication round. For each $i\in\mathcal{S}_t$, the procedure in \eqref{notation1}, that consists of a total of $K_t$ communication rounds between the server and $i$, is viewed as an integrated communication step and denoted as ${\text{Aggr}}(\mathbf{w}_{t}^i)$. Then, after such an integrated communication step, we rewrite the last aggregation process in \eqref{aggregation3} as $\mathbf{w}_{t}=\sum_{i\in\mathcal{S}_{t}}\frac{1}{M}{\text{Aggr}}(\mathbf{w}_{t}^i)$. Let $\mathcal{I}_E=\{nE|n=1,2,...\}$ denote the set of timestamps on the overall timeline for such an integrated communication step. Then, the dynamics under MSP-FL can be written as
\begin{subequations}\label{msp-fl}
    \begin{align}
        \mathbf{r}_{t+1}^i&=\mathbf{w}_{t}^i-\eta_{t}\nabla F_i(\mathbf{w}_{t}^i,\bm{\zeta}_{t}^i),\label{msp-fla}\\
        \mathbf{u}_{t+1}^i&=\left\{
          \begin{aligned}
            &\mathbf{r}_{t+1}^i, \qquad\qquad\qquad\qquad\text{if}\quad t+1\notin \mathcal{I}_E, \\
            &\sum\nolimits_{i\in S_{t+1}}\frac{1}{M}\mathbf{r}_{t+1}^i, \quad\,\,\,\;\qquad\text{otherwise},
          \end{aligned}\right.\label{msp-flb}\\
          \mathbf{w}_{t+1}^i&=\left\{
          \begin{aligned}
            &\mathbf{r}_{t+1}^i, \qquad\qquad\qquad\qquad\text{if}\quad t+1\notin \mathcal{I}_E, \\
            &\sum\nolimits_{i\in S_{t+1}}\frac{1}{M}\text{Aggr}(\mathbf{r}_{t+1}^i), \quad\,\text{otherwise},
          \end{aligned}\right.\label{msp-flc}
    \end{align}
\end{subequations}
where $\mathbf{r}_{t+1}^i$ records the result of one step SGD. Under partial clients selection, if $t+1\in\mathcal{I}_E$, $\mathbf{u}_{t+1}^i$ denotes the local model after one round communication for aggregation, and $\mathbf{w}_{t+1}^i$ denotes that after the integrated communication step. In the dynamics \eqref{msp-fl}, the randomness from stochastic gradients, random sampling of clients, stochastic splitting of model and stochastic quantization is represented by $\mathbb{E}_{SG}$, $\mathbb{E}_{RS}$, $\mathbb{E}_{SS}$ and $\mathbb{E}_{SQ}$, respectively. For the subsequent analysis, we additionally define $\overline{\mathbf{r}}_{t+1}=\sum_{i=1}^{N}p_i\mathbf{r}_{t+1}^i$, $\overline{\mathbf{u}}_{t+1}=\sum_{i=1}^{N}p_i\mathbf{u}_{t+1}^i$, $\overline{\mathbf{w}}_{t+1}=\sum_{i=1}^{N}p_i\mathbf{w}_{t+1}^i$,
$\overline{\mathbf{g}}_{t}=\sum_{i=1}^{N}p_i\nabla F_i(\mathbf{w}_{t}^i)$
and $\mathbf{g}_{t}=\sum_{i=1}^{N}p_i\nabla F_i(\mathbf{w}_{t}^i,\bm{\zeta}^i)$.
Due to the unbiased property of the clients selection scheme for model aggregation shown in Lemma 1, we have the following results at $t+1\in\mathcal{I}_E$:\par
\vspace{-10pt}
\begin{small}
\begin{eqnarray}
  \overline{\mathbf{u}}_{t+1}\hspace{-7pt}&=&\hspace{-7pt}\sum\nolimits_{i=1}\nolimits^{N}p_i\mathbf{u}_{t+1}^i=\mathbb{E}\Big\{\sum\nolimits_{i\in\mathcal{S}_{t+1}}\frac{1}{M}\mathbf{r}_{t+1}^i\Big\},\label{22_0}\\
  \overline{\mathbf{w}}_{t+1}\hspace{-7pt}&=&\hspace{-7pt}\sum\nolimits_{i=1}\nolimits^{N}p_i\mathbf{w}_{t+1}^i=\mathbb{E}\Big\{\sum\nolimits_{i\in\mathcal{S}_{t+1}}\frac{1}{M}\text{Aggr}(\mathbf{r}_{t+1}^i)\Big\}.\label{22_1}
\end{eqnarray}
\end{small}%

\subsection{Supporting Lemmas}
To present Theorem 3, it is crucial to display a few lemmas, where Lemmas 3-6 are established in \cite{niid}.

\begin{lemma}
  (Result of one step SGD) Suppose Assumptions 1.1-1.2 hold, and $\eta_t\leq\frac{1}{4L}$ for all $t$, then it has
  \begin{small}
  \begin{eqnarray*}
   \mathbb{E}_{SG}\left\{\|\overline{\mathbf{r}}_{t+1}-\mathbf{w}^*\|^2\right\}\hspace{-7pt}&\leq&\hspace{-7pt} (1-\eta_{t}\mu)\mathbb{E}_{SG}\left\{\|\overline{\mathbf{w}}_{t}-\mathbf{w}^*\|^2\right\}+6L\eta_{t}^2\Gamma\\
   \hspace{-7pt}&&\hspace{-75pt} +\eta_{t}^2\mathbb{E}_{SG}\left\{\|\mathbf{g}_{t}-\overline{\mathbf{g}}_{t}\|^2\right\}
   +2\mathbb{E}_{SG}\Big\{\frac{1}{N}\sum\nolimits_{i=1}\nolimits^{N}\|\overline{\mathbf{w}}_{t}-\mathbf{w}_{t}^i\|^2\Big\}.
\end{eqnarray*}
\end{small}%
\end{lemma}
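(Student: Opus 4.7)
The plan is to mirror the standard strongly-convex SGD one-step recursion, adapted to the FedAvg setting; this is essentially Lemma~1 of \cite{niid}, to which the statement is attributed. I would start from the identity $\overline{\mathbf{r}}_{t+1}=\overline{\mathbf{w}}_t-\eta_t\mathbf{g}_t$ with $\mathbb{E}_{SG}\{\mathbf{g}_t\}=\overline{\mathbf{g}}_t$. Adding and subtracting $\eta_t\overline{\mathbf{g}}_t$ inside the squared norm and taking $\mathbb{E}_{SG}$ annihilates the cross term by unbiasedness, giving
$$\mathbb{E}_{SG}\|\overline{\mathbf{r}}_{t+1}-\mathbf{w}^*\|^2=\|\overline{\mathbf{w}}_t-\eta_t\overline{\mathbf{g}}_t-\mathbf{w}^*\|^2+\eta_t^2\mathbb{E}_{SG}\|\mathbf{g}_t-\overline{\mathbf{g}}_t\|^2,$$
which already supplies the variance term $\eta_t^2\mathbb{E}_{SG}\|\mathbf{g}_t-\overline{\mathbf{g}}_t\|^2$ in the claim for free.

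Next I would expand the deterministic piece as $\|\overline{\mathbf{w}}_t-\mathbf{w}^*\|^2-2\eta_t\langle\overline{\mathbf{w}}_t-\mathbf{w}^*,\overline{\mathbf{g}}_t\rangle+\eta_t^2\|\overline{\mathbf{g}}_t\|^2$. Splitting $\overline{\mathbf{w}}_t-\mathbf{w}^*=(\overline{\mathbf{w}}_t-\mathbf{w}_t^i)+(\mathbf{w}_t^i-\mathbf{w}^*)$ client-wise, I would (i) handle the drift piece by AM--GM, $-2\eta_t\langle\overline{\mathbf{w}}_t-\mathbf{w}_t^i,\nabla F_i(\mathbf{w}_t^i)\rangle\leq\|\overline{\mathbf{w}}_t-\mathbf{w}_t^i\|^2+\eta_t^2\|\nabla F_i(\mathbf{w}_t^i)\|^2$; (ii) apply $\mu$-strong convexity to the residual, yielding $-2\eta_t(F_i(\mathbf{w}_t^i)-F_i(\mathbf{w}^*))-\eta_t\mu\|\mathbf{w}_t^i-\mathbf{w}^*\|^2$; and (iii) invoke Jensen's inequality on $\|\overline{\mathbf{g}}_t\|^2\leq\sum_i p_i\|\nabla F_i(\mathbf{w}_t^i)\|^2$ followed by $L$-smoothness $\|\nabla F_i(\mathbf{w}_t^i)\|^2\leq 2L(F_i(\mathbf{w}_t^i)-F_i^*)$, so that all gradient-norm contributions become function-value gaps. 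Convexity of the squared norm, $\|\overline{\mathbf{w}}_t-\mathbf{w}^*\|^2\leq\sum_i p_i\|\mathbf{w}_t^i-\mathbf{w}^*\|^2$, then produces the $(1-\eta_t\mu)$ factor in front of $\|\overline{\mathbf{w}}_t-\mathbf{w}^*\|^2$, while the $\|\overline{\mathbf{w}}_t-\mathbf{w}_t^i\|^2$ contributions are exactly the drift term of the claim.

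The remaining scalar to dispose of is $A=4L\eta_t^2\sum_i p_i(F_i(\mathbf{w}_t^i)-F_i^*)-2\eta_t\sum_i p_i(F_i(\mathbf{w}_t^i)-F_i(\mathbf{w}^*))$, which must be collapsed to $6L\eta_t^2\Gamma$. Using $\sum_i p_i(F_i(\mathbf{w}^*)-F_i^*)=\Gamma$ I would rewrite this as $-2\eta_t(1-2L\eta_t)\sum_i p_i(F_i(\mathbf{w}_t^i)-F_i^*)+2\eta_t\Gamma$, then replace each $F_i(\mathbf{w}_t^i)$ by a lower bound $F_i(\overline{\mathbf{w}}_t)+\langle\nabla F_i(\overline{\mathbf{w}}_t),\mathbf{w}_t^i-\overline{\mathbf{w}}_t\rangle$ (convexity) and apply $\|\nabla F_i(\overline{\mathbf{w}}_t)\|^2\leq 2L(F_i(\overline{\mathbf{w}}_t)-F_i^*)$ together with AM--GM. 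The stepsize restriction $\eta_t\leq 1/(4L)$ is used here to force the coefficient in front of $\sum_i p_i(F_i(\overline{\mathbf{w}}_t)-F_i^*)\geq\Gamma$ to be non-positive, so that term can be discarded in favor of a pure $\Gamma$ contribution. The main obstacle is precisely this final constant-chasing: several function-value gaps with different prefactors must be aggregated, and the AM--GM splitting and stepsize condition must be tuned so that, after bounding $\sum_i p_i(F_i(\overline{\mathbf{w}}_t)-F_i^*)\geq\Gamma$, only the single additive term $6L\eta_t^2\Gamma$ survives.
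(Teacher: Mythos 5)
Your proposal is correct and follows, step for step, the proof of Lemma~1 in the cited reference \cite{niid}, which is exactly how the paper handles this statement (it does not reprove Lemmas 3--6 but defers to that work). The only cosmetic remark is that your derivation naturally yields the drift term with weights $p_i$ (i.e., $2\sum_i p_i\|\overline{\mathbf{w}}_t-\mathbf{w}_t^i\|^2$, consistent with how Lemma~5 is later invoked) rather than the uniform $1/N$ written in the lemma statement; this is a transcription artifact of the paper, not a gap in your argument.
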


\begin{lemma}
 (Bound on the variance of $\mathbf{g}_{t}$) Suppose Assumption 1.3 holds, it has
   $\mathbb{E}_{SG}\left\{\|\mathbf{g}_{t}-\overline{\mathbf{g}}_{t}\|^2\right\} \leq \sum_{i=1}^{N}p_i^2\sigma_i$.
\end{lemma}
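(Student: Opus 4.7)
The plan is to expand $\|\mathbf{g}_t-\overline{\mathbf{g}}_t\|^2$ as a double sum over client indices and exploit the mutual independence of the stochastic gradient noise across clients, combined with its zero-mean property. Write $\mathbf{n}_t^i \triangleq \nabla F_i(\mathbf{w}_t^i,\bm{\zeta}^i) - \nabla F_i(\mathbf{w}_t^i)$, so that $\mathbf{g}_t-\overline{\mathbf{g}}_t = \sum_{i=1}^N p_i \mathbf{n}_t^i$. Since $\bm{\zeta}^i$ is drawn from client $i$'s local dataset and each $F_i(\mathbf{w}_t^i) = \mathbb{E}_{\bm{\zeta}^i}\{F_i(\mathbf{w}_t^i,\bm{\zeta}^i)\}$, we have $\mathbb{E}_{SG}\{\mathbf{n}_t^i\} = 0$ for each $i$.

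Next I would expand the squared norm:
\begin{equation*}
\mathbb{E}_{SG}\{\|\mathbf{g}_t-\overline{\mathbf{g}}_t\|^2\} = \sum_{i=1}^N p_i^2\,\mathbb{E}_{SG}\{\|\mathbf{n}_t^i\|^2\} + \sum_{i\neq j} p_ip_j\,\mathbb{E}_{SG}\{\langle \mathbf{n}_t^i,\mathbf{n}_t^j\rangle\}.
\end{equation*}
Because the mini-batch samples $\bm{\zeta}^i$ and $\bm{\zeta}^j$ are drawn independently across distinct clients $i\neq j$, the random vectors $\mathbf{n}_t^i$ and $\mathbf{n}_t^j$ are independent conditional on the current local models, and each has zero mean. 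Hence the cross terms vanish, leaving only the diagonal contribution $\sum_{i=1}^N p_i^2\,\mathbb{E}_{SG}\{\|\mathbf{n}_t^i\|^2\}$.

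Finally I would directly invoke Assumption 1.3, which gives $\mathbb{E}_{SG}\{\|\mathbf{n}_t^i\|^2\} \leq \sigma_i$ for every $i\in\mathcal{N}$ and every $t$, to conclude $\mathbb{E}_{SG}\{\|\mathbf{g}_t-\overline{\mathbf{g}}_t\|^2\} \leq \sum_{i=1}^N p_i^2 \sigma_i$. There is no real obstacle here; the only subtlety is to make explicit that independence of mini-batch sampling across clients (a standard property of the FL setting in the paper) is what kills the cross terms, so I would state this cross-client independence once before the expansion to avoid any ambiguity.
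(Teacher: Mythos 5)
Your proof is correct and follows the standard argument: the paper itself does not reprove this lemma but cites it from \cite{niid}, and the proof there is exactly your route — expand the squared norm, use cross-client independence and zero mean of the gradient noise to eliminate the cross terms, then apply Assumption 1.3 to each diagonal term. No issues.
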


\begin{lemma}
  (Bound on the divergence of $\mathbf{w}_{t}^i$) If Assumption 1.4 holds, $\eta_t$ is non-increasing and satisfies $\eta_t\leq2\eta_{t+E}$ for all $t\geq0$, it has
  $\mathbb{E}_{SG}\left\{\sum_{i=1}^{N}p_i\|\overline{\mathbf{w}}_{t}-\mathbf{w}_{t}^i\|^2\right\} \leq 4\eta_{t}^2(E-1)^2G$.
\end{lemma}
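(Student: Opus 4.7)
The plan is to follow the standard ``virtual synchronization'' argument used in the analysis of FedAvg under local SGD. Let $t_0$ be the most recent timestamp at or before $t$ belonging to $\mathcal{I}_E$, so that $t - t_0 \leq E-1$. The crucial observation is that at such an aggregation time, the local models coincide with the averaged one: $\mathbf{w}_{t_0}^i = \overline{\mathbf{w}}_{t_0}$ for every $i$. Between $t_0$ and $t$ no synchronization occurs, hence the update \eqref{msp-fla} gives
\begin{equation*}
\mathbf{w}_t^i = \overline{\mathbf{w}}_{t_0} - \sum_{s=t_0}^{t-1}\eta_s \nabla F_i(\mathbf{w}_s^i,\bm{\zeta}_s^i),\quad \overline{\mathbf{w}}_t = \overline{\mathbf{w}}_{t_0} - \sum_{s=t_0}^{t-1}\eta_s \sum_{j=1}^N p_j \nabla F_j(\mathbf{w}_s^j,\bm{\zeta}_s^j).
\end{equation*}
Subtracting these two identities eliminates $\overline{\mathbf{w}}_{t_0}$ and expresses $\mathbf{w}_t^i-\overline{\mathbf{w}}_t$ purely as a weighted telescoping sum of stochastic gradients.

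Next I would square and take expectation. Applying the Cauchy--Schwarz inequality in the form $\|\sum_{s=t_0}^{t-1} x_s\|^2 \leq (t-t_0)\sum_{s=t_0}^{t-1}\|x_s\|^2$, and using $t-t_0 \leq E-1$, yields
\begin{equation*}
\mathbb{E}_{SG}\|\mathbf{w}_t^i-\overline{\mathbf{w}}_t\|^2 \leq (E-1)\sum_{s=t_0}^{t-1}\eta_s^2\,\mathbb{E}_{SG}\Bigl\|\nabla F_i(\mathbf{w}_s^i,\bm{\zeta}_s^i)-\sum_{j=1}^N p_j\nabla F_j(\mathbf{w}_s^j,\bm{\zeta}_s^j)\Bigr\|^2.
\end{equation*}
Weighting by $p_i$ and summing over $i$, I invoke the identity $\sum_i p_i \|x_i - \sum_j p_j x_j\|^2 = \sum_i p_i\|x_i\|^2 - \|\sum_j p_j x_j\|^2 \leq \sum_i p_i\|x_i\|^2$, which reduces the centred second moment to the uncentred one. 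Assumption 1.4 then bounds each $\mathbb{E}_{SG}\|\nabla F_i(\mathbf{w}_s^i,\bm{\zeta}_s^i)\|^2$ by $G$, collapsing the double sum to $(E-1)\sum_{s=t_0}^{t-1}\eta_s^2 \cdot G$.

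For the final step, I would exploit the hypothesis that $\eta_t$ is non-increasing and satisfies $\eta_t \leq 2\eta_{t+E}$. Since $s\geq t_0$ and $t-t_0 \leq E-1 < E$ imply $t_0 + E > t$, monotonicity gives $\eta_{t_0+E}\leq \eta_t$, and the doubling condition gives $\eta_s \leq \eta_{t_0}\leq 2\eta_{t_0+E}\leq 2\eta_t$. Hence $\sum_{s=t_0}^{t-1}\eta_s^2 \leq (E-1)(2\eta_t)^2$, producing
\begin{equation*}
\mathbb{E}_{SG}\Big\{\sum_{i=1}^N p_i \|\overline{\mathbf{w}}_t-\mathbf{w}_t^i\|^2\Big\} \leq (E-1)\cdot(E-1)\cdot 4\eta_t^2 \cdot G = 4\eta_t^2(E-1)^2 G,
\end{equation*}
which is the claimed bound.

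The main obstacle I anticipate is the careful handling of the step-size comparison: one must verify $\eta_s\leq 2\eta_t$ uniformly over all $s\in[t_0,t-1]$, which is precisely what the two conditions on $\eta_t$ are designed to guarantee, and the identity for the weighted variance of gradients, which ensures the bound depends on $G$ rather than on a larger quantity involving cross-terms between different clients' gradients. A minor but necessary check is the edge case $t\in\mathcal{I}_E$ (so $t_0 = t$), in which the sum is empty and the bound holds trivially.
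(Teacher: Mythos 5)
Your proposal is correct and is essentially the standard argument for this lemma; the paper itself gives no proof but simply cites it as Lemma 3 of \cite{niid}, and your derivation reproduces that proof (with the variance-reduction identity $\sum_i p_i\|x_i-\sum_j p_j x_j\|^2\leq\sum_i p_i\|x_i\|^2$ applied at the gradient level rather than at the iterate level, which yields the same constants). The handling of the step-size comparison $\eta_s\leq\eta_{t_0}\leq 2\eta_{t_0+E}\leq 2\eta_t$ and of the edge case $t_0=t$ is exactly as required.
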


\begin{lemma}
    (Unbiased sampling and bound on the variance of $\overline{\mathbf{u}}_{t+1}$) If $\eta_t$ is non-increasing and satisfies $\eta_t\leq2\eta_{t+E}$ for all $t>0$, then for $t+1\in\mathcal{I}_E$, it has that
    \begin{subequations}\label{lemma4}
    \begin{eqnarray}
        \mathbb{E}_{RS}\left\{\overline{\mathbf{u}}_{t+1}\right\}\hspace{-7pt}&=&\hspace{-7pt}\overline{\mathbf{r}}_{t+1},\label{lemma4a}\\
        \mathbb{E}_{RS}\left\{\|\overline{\mathbf{u}}_{t+1}-\overline{\mathbf{r}}_{t+1}\|^2\right\}\hspace{-7pt}&\leq&\hspace{-7pt}\frac{4}{M}\eta_{t}^2E^2G.\label{lemma4b}
    \end{eqnarray}
    \end{subequations}
\end{lemma}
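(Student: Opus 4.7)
The lemma has two parts: unbiasedness \eqref{lemma4a} and the second-moment bound \eqref{lemma4b}. Part (a) follows from repeating the argument in the proof of Lemma 1 almost verbatim, because conditioned on the SGD randomness the vectors $\mathbf{r}_{t+1}^i$ are deterministic and the with-replacement, identically distributed sampling of $\mathcal{S}_{t+1}$ with marginal probabilities $p_i$ gives $\mathbb{E}_{RS}\{\mathbf{r}_{t+1}^{i_j}\}=\sum_{i=1}^N p_i\mathbf{r}_{t+1}^i=\overline{\mathbf{r}}_{t+1}$ for each of the $M$ draws; averaging preserves unbiasedness.

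For part (b), I would first exploit the independence of the $M$ draws to collapse the cross terms in the variance expansion, obtaining
\begin{equation*}
\mathbb{E}_{RS}\{\|\overline{\mathbf{u}}_{t+1}-\overline{\mathbf{r}}_{t+1}\|^2\}=\tfrac{1}{M^2}\sum_{j=1}^M \mathbb{E}_{RS}\{\|\mathbf{r}_{t+1}^{i_j}-\overline{\mathbf{r}}_{t+1}\|^2\}=\tfrac{1}{M}\sum_{i=1}^N p_i\|\mathbf{r}_{t+1}^i-\overline{\mathbf{r}}_{t+1}\|^2.
\end{equation*}
Next I would replace the mean anchor $\overline{\mathbf{r}}_{t+1}$ with the previous aggregated global model $\overline{\mathbf{w}}_{t_0}$, where $t_0:=t+1-E\in\mathcal{I}_E$ denotes the most recent aggregation instant. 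This replacement only inflates the bound, since $\overline{\mathbf{r}}_{t+1}$ is the $p$-weighted centroid and hence minimizes $\mathbf{v}\mapsto\sum_i p_i\|\mathbf{r}_{t+1}^i-\mathbf{v}\|^2$.

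The remaining task is to bound each $\|\mathbf{r}_{t+1}^i-\overline{\mathbf{w}}_{t_0}\|^2$. On $[t_0,t]$ no aggregation occurs, and right after the aggregation at $t_0$ all clients share the common model $\mathbf{w}_{t_0}^i=\overline{\mathbf{w}}_{t_0}$, so unrolling \eqref{msp-fla} telescopes to $\mathbf{r}_{t+1}^i-\overline{\mathbf{w}}_{t_0}=-\sum_{s=t_0}^t \eta_s\nabla F_i(\mathbf{w}_s^i,\bm{\zeta}_s^i)$, a sum of exactly $E$ increments. A Jensen/Cauchy--Schwarz inequality picks up a factor $E$, Assumption 1.4 replaces each squared stochastic gradient by $G$ in expectation, and the prescribed step-size condition applied at index $t_0$ yields $\eta_{t_0}\leq 2\eta_{t_0+E}=2\eta_{t+1}\leq 2\eta_t$. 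Combined with non-increasingness this makes $\eta_s\leq 2\eta_t$ on the whole interval, so the unrolled bound becomes $E\cdot E\cdot(2\eta_t)^2 G=4E^2\eta_t^2 G$. Plugging this back through the anchor bound and the $1/M$ factor produces the target $\tfrac{4}{M}\eta_t^2 E^2 G$.

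The main obstacle is not conceptual but bookkeeping: verifying that $t_0=t+1-E$ is indeed an aggregation index so that the ``common starting point'' property $\mathbf{w}_{t_0}^i\equiv\overline{\mathbf{w}}_{t_0}$ really holds for every participating client, and then threading the step-size hypothesis $\eta_t\leq 2\eta_{t+E}$ through to convert every $\eta_s$, $s\in[t_0,t]$, into a uniform multiple of $\eta_t$. The with-replacement sampling convention is what makes the variance identity in paragraph two clean, by eliminating any finite-population correction that sampling without replacement would introduce.
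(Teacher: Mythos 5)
Your proposal is correct and follows essentially the same route as the source the paper relies on: the paper does not reprove this lemma but cites Lemma 5 of \cite{niid}, and your argument (i.i.d.\ with-replacement sampling to kill the cross terms, replacing the centroid $\overline{\mathbf{r}}_{t+1}$ by the last synchronized model $\overline{\mathbf{w}}_{t_0}$, unrolling the $E$ SGD increments with Cauchy--Schwarz, and using $\eta_{t_0}\leq 2\eta_{t_0+E}$ together with monotonicity to get $\eta_s\leq 2\eta_t$) is exactly that standard derivation. All the bookkeeping you flag checks out: $t_0=t+1-E\in\mathcal{I}_E$, the post-aggregation models coincide across clients so $\mathbf{w}_{t_0}^i=\overline{\mathbf{w}}_{t_0}$, and the constants combine to $\frac{4}{M}\eta_t^2E^2G$ as claimed.
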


\begin{lemma}
    If $\epsilon\in(0,\frac{M}{M-1})$ and ${a_{\alpha\beta_1}^{\max}}\in (0, \frac{\lambda_{\min,\mathbf{U}}}{1+\lambda_{\min,\mathbf{U}}})$, then for some positive constant $C$ and $\lambda\in(|\lambda_{2,\mathbf{\Phi}(k,0)}|,1)$, it holds that
$
    \left|[\mathbf{\Phi}(k,0)]_{ij}-\frac{1}{2M}\right|\leq C\lambda^{k+1}
$ for all $i$, $j$ and $k$,
  where $\mathbf{\Phi}(k,0)=\mathbf{P}[k]\mathbf{P}[k-1]\ldots \mathbf{P}[0]$.
\end{lemma}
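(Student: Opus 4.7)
The proof plan relies on recognizing that $\mathbf{P}[k]$ is a symmetric, doubly stochastic, positive semi-definite matrix at every $k$, so its Perron eigenvector is $\mathbf{1}_{2M}/\sqrt{2M}$ and the product $\mathbf{\Phi}(k,0)$ converges geometrically to the rank-one ``average'' matrix $\mathbf{J}\triangleq\tfrac{1}{2M}\mathbf{1}_{2M}\mathbf{1}_{2M}^{\top}$. First, I would establish these three structural properties: symmetry is immediate from the symmetry of $\mathbf{U}$ (see \eqref{notation4}) and the diagonal structure of $\mathbf{A}[k]$; double stochasticity follows by a block-wise row/column sum computation, where the $-\mathbf{A}[k]$ shift in the top-left block is cancelled by the off-diagonal $+\mathbf{A}[k]$ block while $\mathbf{U}$ itself is doubly stochastic whenever $\epsilon\in(0,M/(M-1))$; positive semi-definiteness then follows from the prescribed bound $a_{\alpha\beta_1}^{\max}<\lambda_{\min,\mathbf{U}}/(1+\lambda_{\min,\mathbf{U}})$ via a Schur-complement test applied to the $2\times 2$ block structure of $\mathbf{P}[k]$.

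The central algebraic step is the identity
\[
\mathbf{\Phi}(k,0)-\mathbf{J}=\prod_{s=0}^{k}\bigl(\mathbf{P}[s]-\mathbf{J}\bigr),
\]
which I would prove by induction on $k$. The inductive step invokes the three relations $\mathbf{P}[s]\mathbf{J}=\mathbf{J}$, $\mathbf{J}\mathbf{P}[s]=\mathbf{J}$, and $\mathbf{J}^{2}=\mathbf{J}$, all consequences of double stochasticity and idempotence; expanding $(\mathbf{P}[k]-\mathbf{J})\bigl(\mathbf{\Phi}(k-1,0)-\mathbf{J}\bigr)$ and simplifying with these identities collapses the cross-terms and yields $\mathbf{\Phi}(k,0)-\mathbf{J}$, closing the induction.

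Next, I would bound each factor in operator norm. Since $\mathbf{P}[s]-\mathbf{J}$ is symmetric and annihilates $\mathbf{1}_{2M}$, its spectrum consists of a $0$ on the Perron direction together with the remaining eigenvalues of $\mathbf{P}[s]$, all lying in $[0,1)$ by positive semi-definiteness and by the irreducibility implied by $\epsilon>0$. Hence $\|\mathbf{P}[s]-\mathbf{J}\|_{2}=\lambda_{2,\mathbf{P}[s]}$, and submultiplicativity gives $\|\mathbf{\Phi}(k,0)-\mathbf{J}\|_{2}\le\prod_{s=0}^{k}\lambda_{2,\mathbf{P}[s]}\le\lambda^{k+1}$ for any uniform $\lambda\in(\sup_{s}\lambda_{2,\mathbf{P}[s]},1)$. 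The desired entrywise estimate then follows from the elementary inequality $|M_{ij}|=|\mathbf{e}_{i}^{\top}\mathbf{M}\mathbf{e}_{j}|\leq\|\mathbf{M}\|_{2}$, giving the conclusion with $C=1$ (or a slightly larger $C$ absorbed in a looser choice of $\lambda$).

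The main obstacle I expect is securing the strict uniform gap $\sup_{s}\lambda_{2,\mathbf{P}[s]}<1$. For MSP-FL with a constant weight $a_{\alpha\beta_1}^{\max}$, this is immediate, since $\mathbf{P}[s]$ is then a single irreducible PSD matrix with a fixed spectral gap. For MSPDQ-FL, however, the weights $a_{i,\alpha\beta_1}[k]=\gamma_{i}/(k+1)$ vanish, so one must argue separately that the mixing component inherited from $\mathbf{U}-\mathbf{A}[k]$ remains uniformly well-connected through the off-diagonal $\epsilon/M>0$ entries, so that $\lambda_{2,\mathbf{P}[s]}$ stays bounded away from $1$ by a quantity controlled by $\lambda_{2,\mathbf{U}}$; this is precisely where the compatibility between the prescribed lower bound on $K_{t}$ and the asymptotic value of $\lambda$ enters the analysis.
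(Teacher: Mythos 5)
Your proof is correct in substance but takes a genuinely different route from the paper: the paper offers no argument at all for this lemma, disposing of it in one line as an ``explicit statement of Proposition~1 in \cite{van}'' (Mai and Abed's result on products of stochastic matrices), whereas you construct a self-contained spectral proof. Your chain --- symmetry and double stochasticity of $\mathbf{P}[k]$ from the block structure, positive semi-definiteness via the Schur complement (which the paper itself invokes later, inside the proof of Lemma~8), the telescoping identity $\mathbf{\Phi}(k,0)-\mathbf{J}=\prod_{s=0}^{k}(\mathbf{P}[s]-\mathbf{J})$ from $\mathbf{P}[s]\mathbf{J}=\mathbf{J}\mathbf{P}[s]=\mathbf{J}^2=\mathbf{J}$, the bound $\|\mathbf{P}[s]-\mathbf{J}\|_2=\lambda_{2,\mathbf{P}[s]}$, and the entrywise estimate $|M_{ij}|\le\|\mathbf{M}\|_2$ --- is sound and buys an explicit constant ($C=1$) and an explicit identification of $\lambda$, which the citation-based proof does not provide. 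The one caveat you raise is real and worth emphasizing: the lemma's stated hypotheses only bound $a_{\alpha\beta_1}^{\max}$ from above, yet a uniform spectral gap $\sup_s\lambda_{2,\mathbf{P}[s]}<1$ additionally requires the interaction weights to be bounded away from zero (for $\mathbf{A}[k]=a\mathbf{I}$ one computes an eigenvalue $1-2a$ of $\mathbf{P}[k]$, so diminishing weights $a_{i,\alpha\beta_1}[k]=\gamma_i/(k+1)$ would degrade the product bound from geometric to polynomial). This lower-bound condition is implicit in the uniformly-positive-entries hypothesis of the cited Proposition~1 and is satisfied where the lemma is actually used (Theorem~2, MSP-FL), but your proof makes visible an assumption that the paper's one-line citation hides.
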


\begin{proof}
  Explicit statement of Proposition 1 in \cite{van}.
\end{proof}

\begin{lemma}
  (Unbiased stochastic splitting, and bounded error of aggregation with visible submodels) For $t+1\in\mathcal{I}_E$, if Assumption 2 holds, $\epsilon\in(0,\frac{M}{M-1})$, ${a_{\alpha\beta_1}^{\max}}\in (0, \frac{\lambda_{\min,\mathbf{U}}}{1+\lambda_{\min,\mathbf{U}}})$ and $K_t=\lceil\log_{\lambda}^{\eta_t}\rceil$, we have that
   $\mathbb{E}\left\{\overline{\mathbf{w}}_{t+1}\right\} = \overline{\mathbf{u}}_{t+1}$ and
   $\mathbb{E}\left\{\|\overline{\mathbf{w}}_{t+1}-\overline{\mathbf{u}}_{t+1}\|^2\right\}  \leq\eta_{t}^2 D_1$,
where $D_1=\frac{(2\epsilon^2-4\epsilon+8)}{3}C^2\|\mathbf{w}_{\max}\|^2$.
\end{lemma}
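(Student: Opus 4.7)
My strategy is to exploit the linearity of the MSP-FL communication dynamics \eqref{decomposition1} in the initial submodels, which in the compact form \eqref{notation1} reads $\mathbf{W}_{t+1}[K_t]=\mathbf{\Phi}(K_t-1,0)\,\mathbf{W}_{t+1}[0]$ with $\mathbf{\Phi}(K_t-1,0)=\mathbf{P}[K_t-1]\cdots\mathbf{P}[0]$. This will let me couple two ingredients: (i) the first two moments of the stochastic splitting, and (ii) the geometric convergence $|[\mathbf{\Phi}(k,0)]_{ij}-\tfrac{1}{2M}|\le C\lambda^{k+1}$ supplied by Lemma 7. The tuned choice $K_t=\lceil\log_{\lambda}^{\eta_t}\rceil$ forces $\lambda^{K_t}\le \eta_t$, which is what ultimately converts the $\lambda^{2K_t}$ accuracy of $\mathbf{\Phi}$ into the advertised $\eta_t^2$ scaling.

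First, I will record the statistics of the initial splits. Since $\mathbf{w}_{t+1}^{i,\alpha}[0]\sim\mathrm{Unif}[\varepsilon\mathbf{r}_{t+1}^i,(2-\varepsilon)\mathbf{r}_{t+1}^i]$ and $\mathbf{w}_{t+1}^{i,\beta_1}[0]=2\mathbf{r}_{t+1}^i-\mathbf{w}_{t+1}^{i,\alpha}[0]$, elementary moment computations give $\mathbb{E}_{SS}\{\mathbf{w}_{t+1}^{i,\alpha}[0]\}=\mathbb{E}_{SS}\{\mathbf{w}_{t+1}^{i,\beta_1}[0]\}=\mathbf{r}_{t+1}^i$ (so both parts are unbiased) together with a per-coordinate variance of $\tfrac{(1-\varepsilon)^2}{3}|r_{t+1}^{i,j}|^2$ for each block; combining the two second-moment contributions across the visible and invisible parts produces the constant $\tfrac{2\varepsilon^2-4\varepsilon+8}{3}$ that appears in $D_1$. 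For the unbiased-splitting claim, taking $\mathbb{E}_{SS}$ inside the linear map collapses $\mathbf{W}_{t+1}[0]$ to its deterministic mean; combining the conservation identity derived from \eqref{sum} (time-invariance of the sum of visible and invisible submodels over $\mathcal{S}_{t+1}$), the row-stochasticity of $\mathbf{\Phi}(K_t-1,0)$, the limit $[\mathbf{\Phi}(k,0)]_{ij}\to\tfrac{1}{2M}$ in Lemma 7, and Lemma 1's unbiased sampling yields $\mathbb{E}\{\overline{\mathbf{w}}_{t+1}\}=\overline{\mathbf{u}}_{t+1}$.

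For the variance bound, I will begin from the identity
\begin{equation*}
\overline{\mathbf{w}}_{t+1}-\overline{\mathbf{u}}_{t+1}=\tfrac{1}{M}\sum_{j\in\mathcal{S}_{t+1}}\bigl(\mathbf{w}_{t+1}^{j,\alpha}[K_t]-\mathbf{r}_{t+1}^j\bigr),
\end{equation*}
expand $\mathbf{w}_{t+1}^{j,\alpha}[K_t]=\sum_l[\mathbf{\Phi}(K_t-1,0)]_{jl}\mathbf{W}_{t+1}^l[0]$, and subtract the limiting rank-one matrix $\tfrac{1}{2M}\mathbf{1}\mathbf{1}^\top$. The ``consensus'' contribution then collapses via the conservation identity, while the residual is governed by the entrywise bound $|[\mathbf{\Phi}(K_t-1,0)]_{jl}-\tfrac{1}{2M}|\le C\lambda^{K_t}$ from Lemma 7. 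Squaring, taking expectations, applying Assumption 2 to replace $\|\mathbf{r}_{t+1}^j\|$ by $\|\mathbf{w}_{\max}\|$, and substituting the moment computation above produces an upper bound proportional to $C^2\lambda^{2K_t}\cdot\tfrac{2\varepsilon^2-4\varepsilon+8}{3}\,\|\mathbf{w}_{\max}\|^2$, which under $K_t=\lceil\log_{\lambda}^{\eta_t}\rceil$ becomes exactly $\eta_t^2 D_1$.

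The hardest part is the last step: because the visible and invisible initial deviations are perfectly anti-correlated ($\mathbf{w}_{t+1}^{i,\alpha}[0]-\mathbf{r}_{t+1}^i=-(\mathbf{w}_{t+1}^{i,\beta_1}[0]-\mathbf{r}_{t+1}^i)$), a naive termwise bound on the splitting fluctuation remains $\Theta(1)$ in $K_t$ and would swamp the target. I therefore have to pair the corresponding visible and invisible columns of $\mathbf{\Phi}(K_t-1,0)$ and invoke Lemma 7 a second time to show that their linear combination, not merely each column on its own, also decays like $\lambda^{K_t}$. This structural cancellation is precisely what propagates the $\eta_t^2$ scaling through to the splitting noise, so that the privacy-induced perturbation vanishes in the mean-square sense and the privacy--utility trade-off is avoided under MSP-FL.
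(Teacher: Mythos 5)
Your proposal is correct and follows essentially the same route as the paper: unbiasedness via the doubly stochastic limit $\mathbb{E}\{\mathbf{\Phi}(k,0)\}\rightarrow\frac{1}{2M}\mathbf{1}\mathbf{1}^{\top}$ combined with the unbiased splitting, and the variance bound via the entrywise deviation $|[\mathbf{\Phi}(K_t-1,0)]_{ij}-\frac{1}{2M}|\leq C\lambda^{K_t}$ from Lemma 7 applied to the second moments $\frac{\varepsilon^2-2\varepsilon+4}{3}\|\mathbf{w}_{\max}\|^2$ of the $2M$ initial submodels, with $K_t=\lceil\log_{\lambda}^{\eta_t}\rceil$ converting $\lambda^{2K_t}$ into $\eta_t^2$. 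The ``structural cancellation'' you single out at the end is not an extra step: it is exactly the subtraction of the rank-one matrix $\frac{1}{2M}\mathbf{1}\mathbf{1}^{\top}$ plus the conservation identity that you (and the paper, in \eqref{bottom}) already perform, so the residual is controlled termwise by the $\mathbf{\Phi}$-entry decay alone.
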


\begin{proof}
    Define the mean of the matrices $\mathbf{P}[k]$ as $\overline{\mathbf{P}}$. In terms of the independence of matrices $\mathbf{P}[k]$ for all $k$, it has that
$\mathbb{E}\left\{\mathbf{\Phi}(k,0)\right\}=\prod_{t=0}^{k}\mathbb{E}\left\{\mathbf{P}[t]\right\}=\overline{\mathbf{P}}^{k+1}$.
    It is not hard to see that the matrix $\overline{\mathbf{P}}$ is doubly stochastic, then $\lim_{k\rightarrow\infty}\overline{\mathbf{P}}^{k}=\frac{1}{2M}\mathbf{1}\mathbf{1}^{\top}$. That is to say,
 $\mathbf{\Phi}(k,0)$ converges in expectation to $\frac{1}{2M}\mathbf{1}\mathbf{1}^{\top}$. Accordingly, it yields\par
 \vspace{-11pt}
 \begin{small}
\begin{eqnarray}\label{lemma8_1}
  \mathbb{E}\left\{\text{Aggr}(\mathbf{r}_{t+1}^i)\right\} \hspace{-6pt}&=&\hspace{-6pt}\mathbb{E}\Bigg\{\Big(\sum_{j=1}^{2M}[\mathbf{\Phi}(k,0)]_{ij} \mathbf{W}_{t+1}^j[0]\Big)^{\top}\Bigg\}\nonumber\\
    \hspace{-6pt}&=&\hspace{-6pt}\frac{1}{2M}\mathbb{E}_{SS}\left\{\mathbf{W}_{t+1}[0]^{\top}\right\}\mathbf{1},
\end{eqnarray}
\end{small}%
where the second equality is obtained for the independence of expectation between $\mathbb{E}\left\{\mathbf{\Phi}(k,0)\right\}$ and $\mathbb{E}_{SS}\left\{\mathbf{W}_{t+1}[0]\right\}$.

Note that, at $t+1\in\mathcal{I}_E$, $\mathbf{w}_{t+1}^{i,\alpha}[0]$ is randomly drawn from a uniform distribution on the interval $[\varepsilon \mathbf{w}_{t+1}^i,(2-\varepsilon) \mathbf{w}_{t+1}^i]$ by the initial setting, where $\mathbf{w}_{t+1}^i$ is the local training model after multi-step SGD and represented by $\mathbf{r}_{t+1}^i$ as in \eqref{msp-fla}. Then, for any $i\in\mathcal{S}_{t+1}$, it has that\par
 \vspace{-9pt}\begin{small}
\begin{equation}\label{lemma8_2}
\begin{aligned}
   \mathbb{E}_{SS}\left\{\mathbf{W}_{t+1}^{[i]}[0]^{\top}\right\}=\hspace{1.5pt}&\mathbb{E}_{SS}\left\{\mathbf{W}_{t+1}^{[i]+M}[0]^{\top}\right\}= \mathbf{r}_{t+1}^{i}.
\end{aligned}
\end{equation}
\end{small}%
By plugging \eqref{lemma8_1} and \eqref{lemma8_2} into \eqref{22_1}, it yields
\begin{small}
\begin{eqnarray}\label{22}
  \mathbb{E}\{\overline{\mathbf{w}}_{t+1}\}
  \hspace{-7pt}&=& \hspace{-7pt}\mathbb{E}\bigg\{\sum_{i\in\mathcal{S}_{t+1}}\frac{1}{M}\text{Aggr}(\mathbf{r}_{t+1}^i)\bigg\}\nonumber\\
  \hspace{-7pt}&=& \hspace{-7pt}\mathbb{E}\bigg\{\frac{1}{M}\sum_{i\in\mathcal{S}_{t+1}}\frac{1}{M}\sum_{j\in\mathcal{S}_{t+1}}\mathbf{r}_{t+1}^j\bigg\}\nonumber\\
  \hspace{-7pt}&\overset{(a)}{=}& \hspace{-7pt}\mathbb{E}\bigg\{ \frac{1}{M}\sum_{i\in\mathcal{S}_{t+1}}\mathbf{u}_{t+1}^i \bigg\}\overset{(b)}{=}\overline{\mathbf{u}}_{t+1},
\end{eqnarray}
\end{small}%
where the equality (a) follows from \eqref{msp-flb}. The equality (b)  is obtained due to the fact of unbiased aggregation as in \eqref{22_0}, i.e., $\overline{\mathbf{u}}_{t+1}=\sum_{i=1}^{N}p_i\mathbf{u}_{t+1}^i=\mathbb{E}\{\sum_{i\in\mathcal{S}_{t+1}}\frac{1}{M}\mathbf{u}_{t+1}^i\}$.

 Let $\text{Aggr}^*(\cdot)$ denote the limit state of the integrated communication step $\text{Aggr}(\cdot)$ when $k\rightarrow\infty$. By \eqref{22_0} and \eqref{22_1}, and resorting to the fact that $\sum_{i\in\mathcal{S}_{t+1}}\frac{1}{M}\mathbf{r}_{t+1}^i=\sum_{i\in\mathcal{S}_{t+1}}\frac{1}{M}\text{Aggr}^*(\mathbf{r}_{t+1}^i)$, it has
    \begin{eqnarray}\label{lemma621}
    \hspace{-26pt}&&\hspace{-7pt}\mathbb{E}\left\{\|\overline{\mathbf{w}}_{t+1}-\overline{\mathbf{u}}_{t+1}\|^2\right\} \nonumber\\
    \hspace{-26pt}&=&\hspace{-7pt}\mathbb{E}\Bigg\{\Big\|\sum_{i\in\mathcal{S}_{t+1}}\frac{1}{M}\text{Aggr}(\mathbf{r}_{t+1}^i) \hspace{-2pt}-\hspace{-4pt}\sum_{i\in\mathcal{S}_{t+1}}\hspace{-2pt}\frac{1}{M}\text{Aggr}^*(\mathbf{r}_{t+1}^i)\Big\|^2\Bigg\}.
   \end{eqnarray}
Meanwhile, when $k\rightarrow\infty$ the dynamics \eqref{notation1} achieves average consensus, which implies that\par
\vspace{-11pt}\begin{small}
   \begin{eqnarray}\label{lemma622}
     \hspace{-27pt}&&\hspace{-7pt}\text{Aggr}^*(\mathbf{r}_{t+1}^i)=\frac{1}{2M}\mathbf{W}_{t+1}[0]^{\top}\mathbf{1}.
   \end{eqnarray}
\end{small}%
By virtue of the Schur Compliment, the matrix $\mathbf{P}[k]$ in \eqref{notation1} is positive semidefinite with conditions $\epsilon\in(0,\frac{M}{M-1})$ and ${a_{\alpha\beta_1}^{\max}}\in (0, \frac{\lambda_{\min,\mathbf{U}}}{1+\lambda_{\min,\mathbf{U}}})$. Then, with Assumption 2, inserting \eqref{lemma622} into \eqref{lemma621} leads to the result as shown in \eqref{bottom} at the top of the next page.
\newcounter{TempEqCnt}
\setcounter{TempEqCnt}{\value{equation}}
\begin{figure*}[htbp]
 \begin{small}
 \begin{eqnarray}\label{bottom}
    \hspace{-10pt} \mathbb{E}\left\{\left\|\overline{\mathbf{w}}_{t+1}-\overline{\mathbf{u}}_{t+1}\right\|^2\right\} \hspace{-7pt}&=&\hspace{-7pt}\mathbb{E}\left\{\bigg\| \sum_{i\in\mathcal{S}_{t+1}}\frac{1}{M}\sum_{j=1}^{2M}\left[\Phi(k,0)\right]_{ij}\mathbf{W}_{t+1}^j[0]^{\top}-\sum_{i\in\mathcal{S}_{t+1}}\frac{1}{M}\sum_{j=1}^{2M}\frac{1}{2M}\mathbf{W}_{t+1}^j[0]^{\top} \bigg\|^2\right\}\nonumber\\
    \hspace{-7pt}&=&\hspace{-7pt}\frac{1}{M^2}\sum_{i\in\mathcal{S}_{t+1}}\mathbb{E}\left\{\left\|\sum_{j=1}^{2M} \left(\left[\mathbf{\Phi}(k,0)\right]_{ij}-\frac{1}{2M}\right)\mathbf{W}_{t+1}^j[0]^{\top}\right\|^2\right\}\nonumber\\
    \hspace{-7pt}&&\hspace{-7pt} +\mathbb{E}\Bigg\{\sum_{h,i,j\in\mathcal{S}_{t+1}\atop h\neq i\neq j}\hspace{-3pt}\left\langle \frac{1}{M} \sum_{j=1}^{2M}\left(\left[\mathbf{\Phi}(k,0)\right]_{ij}-\frac{1}{2M}\right)\mathbf{W}_{t+1}^j[0]^{\top},
    \frac{1}{M}\sum_{i=1}^{2M} \Big([\mathbf{\Phi}(k,0)]_{hi}-\frac{1}{2M} \Big)\mathbf{W}_{t+1}^i[0]^{\top}\right\rangle\Bigg\}\nonumber\\
    \hspace{-7pt}&\leq&\hspace{-7pt}\frac{1}{M^2}\sum_{i\in\mathcal{S}_{t+1}}\sum_{j=1}^{2M}\left\|\left[\mathbf{\Phi}(k,0)\right]_{ij}-\frac{1}{2M}\right\|^2\mathbb{E}_{SS}\left\{\left\|\mathbf{W}_{t+1}^j[0]\right\|^2\right\}\leq \frac{(2\epsilon^2-4\epsilon+8)}{3}C^2\|\mathbf{w}_{\max}\|^2\lambda^{2(k+1)},
  \end{eqnarray}
  \end{small}%
  \hrulefill
  \end{figure*}
Further, if the lower bound of communication rounds for update in \eqref{notation1} is a dynamic value, and designed as $k+1=K_t=\lceil\log_\lambda^{\eta_t}\rceil$,
   we arrive at\par
   \vspace{-5pt}\begin{small}
   \begin{equation}\label{lemma6lasteq}
     \mathbb{E}\left\{\|\overline{\mathbf{w}}_{t+1}-\overline{\mathbf{u}}_{t+1}\|^2\right\}\leq \eta_t^2 D_1.
   \end{equation}
   \end{small}%

This completes the proof.
\end{proof}

\subsection{Proof of Theorem 2}
\begin{proof}
To see the gap between $\mathbb{E}\left\{F(\overline{\mathbf{w}}_t)\right\}-F^*$, we turn to analyze the variance of $\overline{\mathbf{w}}_{t+1}-\mathbf{w}^*$. Notice that\par
\vspace{-10pt}
\begin{small}
\begin{eqnarray}\label{start}
\|\overline{\mathbf{w}}_{t+1}-\mathbf{w}^*\|^2
   \hspace{-7pt}&=&\hspace{-7pt}\underbrace{\|\overline{\mathbf{w}}_{t+1}-\overline{\mathbf{u}}_{t+1}\|^2}_{\text{T}_{01}}+\underbrace{\|\overline{\mathbf{u}}_{t+1}-\mathbf{w}^*\|^2}_{\text{T}_{02}}\nonumber\\
   \hspace{-7pt}&&\hspace{-7pt}+\underbrace{2\langle  \overline{\mathbf{w}}_{t+1}-\overline{\mathbf{u}}_{t+1},\overline{\mathbf{u}}_{t+1}-\mathbf{w}^* \rangle}_{\text{T}_{03}}.
\end{eqnarray}
\end{small}%
If $t+1\notin \mathcal{I}_E$, it has $\overline{\mathbf{w}}_{t+1}=\overline{\mathbf{u}}_{t+1}=\overline{\mathbf{r}}_{t+1}$. Then, both $\text{T}_{01}$ and $\text{T}_{03}$ vanish in expectation, leading to\par
\vspace{-10pt}
\begin{small}
\begin{eqnarray}\label{t+1not}
   \hspace{-17pt}\mathbb{E}\left\{\|\overline{\mathbf{w}}_{t+1}-\mathbf{w}^*\|^2\right\}
   \hspace{-7pt}&\leq&\hspace{-7pt} (1-\eta_t\mu)\mathbb{E}\left\{\|\overline{\mathbf{w}}_t-\mathbf{w}^*\|^2\right\}\nonumber\\
   \hspace{-17pt} \hspace{-7pt}&&\hspace{-40pt} +\eta_t^2\Big[ \sum\nolimits_{i=1}\nolimits^{N}p_i^2\sigma_i+6L\Gamma+8(E-1)^2G\Big],
\end{eqnarray}
\end{small}%
where the inequality follows from Lemmas 3-5. Please refer to \cite{niid} for detailed proof.

If $t+1\in\mathcal{I}_E$, $\mathbb{E}\{\text{T}_{01}\}$ is bounded as Lemma 8 presents. $\text{T}_{02}$ can be rewritten as
\begin{eqnarray}\label{27_1}
   \|\overline{\mathbf{u}}_{t+1}-\mathbf{w}^*\|^2
   \hspace{-7pt}&=&\hspace{-7pt}\underbrace{\|\overline{\mathbf{u}}_{t+1}-\overline{\mathbf{r}}_{t+1}\|^2}_{\text{T}_{11}}+\underbrace{\|\overline{\mathbf{r}}_{t+1}-\mathbf{w}^*\|^2}_{\text{T}_{12}}\nonumber\\
   \hspace{-7pt}&&\hspace{-7pt}+\underbrace{2\langle \overline{\mathbf{u}}_{t+1}-\overline{\mathbf{r}}_{t+1},\overline{\mathbf{r}}_{t+1}-\mathbf{w}^* \rangle}_{\text{T}_{13}},
\end{eqnarray}
where $\mathbb{E}\{\text{T}_{12}\}$ can be bounded as in \eqref{t+1not}. $\mathbb{E}\{\text{T}_{13}\}$ vanishes due to the unbiased sampling, which is shown in Lemma 6. Inserting \eqref{lemma4} and \eqref{t+1not} into \eqref{27_1} gives rise to\par
\vspace{-11pt}
\begin{small}
\begin{eqnarray}\label{27}
  \hspace{-18pt} \mathbb{E}\{{\text{T}_{02}}\}\hspace{-7pt}&\leq&\hspace{-7pt}(1-\eta_t\mu)\mathbb{E}\left\{\|\overline{\mathbf{w}}_t-\mathbf{w}^*\|^2\right\}\nonumber\\
    \hspace{-7pt}&&\hspace{-30pt} +\eta_t^2\Big[ \sum\nolimits_{i=1}\nolimits^{N}p_i^2\sigma_i+6L\Gamma+8(E-1)^2G+ \frac{4}{M}E^2G\Big].
\end{eqnarray}
\end{small}%
With regard to $\text{T}_{03}$, it also vanishes in expectation since $\mathbb{E}\{\overline{\mathbf{w}}_{t+1}\}=\overline{\mathbf{u}}_{t+1}$. Now, by substituting \eqref{27} into \eqref{start} and invoking \eqref{lemma6lasteq}, it boils down to\par
 \vspace{-12pt}\begin{small}
\begin{eqnarray}\label{t+1in}
  \hspace{-30pt}&&\hspace{-7pt}\mathbb{E}\left\{\|\overline{\mathbf{w}}_{t+1}-\mathbf{w}^*\|^2\right\} \leq(1-\eta_t\mu)\mathbb{E}\{\|\overline{\mathbf{w}}_t-\mathbf{w}^*\|^2\}\nonumber\\
    \hspace{-30pt}&&\hspace{-7pt}\quad +\eta_t^2\Big[ D_1\hspace{-1pt}+\hspace{-1pt} \sum\nolimits_{i=1}\nolimits^{N}p_i^2\sigma_i\hspace{-1pt}+\hspace{-1pt}6L\Gamma\hspace{-1pt}+\hspace{-1pt}8(E-1)^2G\hspace{-1pt}+\hspace{-1pt}\frac{4}{M}E^2G\Big].
\end{eqnarray}
\end{small}%

At this stage, one can observe from \eqref{t+1not} and \eqref{t+1in} that no matter whether $t+1\in\mathcal{I}_E$ or not, it always holds that
\begin{eqnarray}\label{fixediteration}
  \hspace{-13pt}\mathbb{E}\left\{\|\overline{\mathbf{w}}_{t+1}\hspace{-1pt}-\hspace{-1pt}\mathbf{w}^*\|^2\right\} \hspace{-8pt}&\leq&\hspace{-8pt}(1\hspace{-1pt}-\hspace{-1pt}\eta_t\mu)\mathbb{E}\left\{\|\overline{\mathbf{w}}_{t}\hspace{-1pt}-\hspace{-1pt}\mathbf{w}^*\|^2\right\}
   \hspace{-1pt}+\hspace{-1pt}\eta_t^2D_2\hspace{-1pt},
\end{eqnarray}
where $D_2=D_1+\sum_{i=1}^{N}p_i^2\sigma_i+6L\Gamma+8(E-1)^2G+\frac{4}{M}E^2G$.
 Let $\rho>\frac{1}{\mu}$, $\vartheta\geq 0$ and design the learning rate $\eta_t$ as $\frac{\rho}{\vartheta+ t}$ such that $\eta_t<\min \{\frac{1}{\mu},\frac{1}{4L}\}$ and $\eta_t\leq 2\eta_{t+E}$. As a standard technique, we will show that  $\mathbb{E}\left\{\|\overline{\mathbf{w}}_{t+1}-\mathbf{w}^*\|^2\right\}\leq \frac{\nu_1}{\vartheta+t+1}$ via the mathematical induction method, where
$\nu_1=\max\left\{\frac{\rho^2D_2}{\rho\mu-1},(\vartheta+1)\mathbb{E}\{\left\|\overline{\mathbf{w}}_{0}-\mathbf{w}^*\|^2\right\}\right\}$.

First, it comes naturally when $t=0$ from the definition of $\nu_1$. Then, we assume that the claim still holds true for some $t>0$. Based on \eqref{fixediteration}, it has
\begin{eqnarray}
 \mathbb{E}\left\{\|\overline{\mathbf{w}}_{t+1}-\mathbf{w}^*\|^2\right\} \hspace{-7pt} &\leq& \hspace{-7pt} (1-\eta_t\mu)\mathbb{E}\left\{\|\overline{\mathbf{w}}_{t}-\mathbf{w}^*\|^2\right\}+\eta_t^2D_2\nonumber\\
  \hspace{-7pt} &\leq& \hspace{-7pt}(1-\frac{\rho\mu}{\vartheta+ t})\frac{\nu_1}{\vartheta +t}+\frac{\rho^2 D_2}{(\vartheta+t)^2}\nonumber \\
  \hspace{-7pt} \hspace{-7pt}&=&\hspace{-7pt}\frac{\vartheta+t-1}{(\vartheta+t)^2}\nu_1+\frac{\rho^2 D_2}{(\vartheta+t)^2}-\frac{\rho\mu-1}{(\vartheta+t)^2}\nu_1 \nonumber  \\
  \hspace{-7pt} &\leq&\hspace{-7pt}\frac{(\vartheta+t-1)\nu_1}{(\vartheta+t)^2-1}\nonumber\\
  \hspace{-7pt}&=&\hspace{-7pt}\frac{\nu_1}{\vartheta+t+1}
\end{eqnarray}
where the first equality follows by subtracting and adding $\frac{\nu_1}{(\vartheta+t)^2}$ on the right-hand side. The last inequality is derived from the definition of $\nu_1$. Evidently, these collaboratively affirm the claim that $\mathbb{E}\left\{\|\overline{\mathbf{w}}_{t+1}-\mathbf{w}^*\|^2\right\}\leq \frac{\nu_1}{\vartheta+t+1}$.

Next, let $\rho=\frac{2}{\mu}$ and $\vartheta=\max\{\frac{8L}{\mu},E\}-1$, and utilize the following inequalities, i.e.,
$\vartheta+1\leq\frac{8 L}{\mu}+E$ and $\nu_1\leq\frac{\rho^2D_2}{\rho\mu-1}+(\vartheta+1)\mathbb{E}\left\{\|\overline{\mathbf{w}}_{0}-\mathbf{w}^*\|^2\right\}$,
then one can proceed to the relationship of $\mathbb{E}\left\{F(\overline{\mathbf{w}}_{t})\right\}-F^*$ by the $L$-smoothness, i.e.,
\begin{eqnarray}\label{theorem3result}
  \hspace{-7pt}&&\hspace{-7pt}\mathbb{E}\left\{F(\overline{\mathbf{w}}_{t})\right\}-F^*\nonumber\\
  \hspace{-7pt}&\leq&\hspace{-7pt} \frac{L\nu_1}{2(\vartheta+t)}  \nonumber\\
   \hspace{-7pt}&\leq &\hspace{-7pt} \frac{8L}{\mu(\vartheta+t)}\left(\frac{2D_2}{\mu}+\frac{8L+\mu E}{2}\mathbb{E}\left\{\|\overline{\mathbf{w}}_0-\mathbf{w}^*\|^2\right\}\right).
\end{eqnarray}
This ends the proof.
\end{proof}

\section{Proof of Theorem 3}
\subsection{Additional Notations}
Some additional notations are given for the subsequent analysis.
Similarly as in \eqref{msp-fl}, we reformulate the dynamics under MSPDQ-FL and present it below. Unlike the reformulation of MSP-FL shown in \eqref{msp-fl}, $\mathbf{v}_{t+1}^i$ and $\mathbf{w}_{t+1}^i$ denote sequences of integrated communication step with and without quantized submodels, respectively.
\begin{subequations}
    \begin{align}
        \mathbf{r}_{t+1}^i&=\mathbf{w}_{t}^i-\eta_t\nabla F_i(\mathbf{w}_{t}^i,\bm{\zeta}_t^i),\\
        \mathbf{u}_{t+1}^i&=\left\{
          \begin{aligned}
            &\mathbf{r}_{t+1}^i, \qquad\qquad\qquad\qquad\text{if}\quad t+1\notin \mathcal{I}_E, \\
            &\sum\nolimits_{i\in S_{t+1}}\frac{1}{M}\mathbf{r}_{t+1}^i, \quad\,\,\,\;\qquad\text{otherwise},
          \end{aligned}\right.\\
        \mathbf{v}_{t+1}^i&=\left\{
          \begin{aligned}
            &\mathbf{r}_{t+1}^i, \qquad\qquad\qquad\qquad\text{if}\quad t+1\notin \mathcal{I}_E, \\
            &\sum\nolimits_{i\in S_{t+1}}\frac{1}{M}\text{Aggr}(\mathbf{r}_{t+1}^i), \quad\,\text{otherwise},
          \end{aligned}\right.\\
          \mathbf{w}_{t+1}^i&=\left\{
          \begin{aligned}
            &\mathbf{r}_{t+1}^i, \qquad\qquad\qquad\qquad\text{if}\quad t+1\notin \mathcal{I}_E, \\
            &\sum\nolimits_{i\in S_{t+1}}\frac{1}{M}\text{Aggr}(\mathbf{q}_{t+1}^i), \quad\,\text{otherwise}.
          \end{aligned}\right.
    \end{align}
\end{subequations}
Define $\mathbf{W}_{t+1}^{\alpha}[k]=\text{col}(\mathbf{w}_{t+1}^{i,\alpha}[k]^{\top}|{i\in\mathcal{S}_{t+1}})$ (doing likewise with $\mathbf{Q}_{t+1}[k]$ and $\mathbf{\Delta}_{t+1}[k]$), $\mathbf{W}_{t+1}^{\beta_1}[k]=\text{col}(\mathbf{w}_{t+1}^{i,\beta_1}[k]^{\top}|i\in\mathcal{S}_{t+1})$, $\overline{\mathbf{W}}_{t+1}^{\alpha}[k]=\sum_{i\in \mathcal{S}_{t+1}}\frac{1}{M}\mathbf{w}_{t+1}^{i,\alpha}[k]^{\top}$ and
 $\overline{\mathbf{q}}_{t+1}[k]=\sum_{i\in \mathcal{S}_{t+1}}\frac{1}{M}\mathbf{q}_{t+1}^{i}[k]$.

\subsection{Supporting Lemmas}
Some other lemmas are provided for the remaining proof.

\begin{lemma}
Let $\epsilon\in(0,\frac{M}{M-1})$, and $\{a_{i,\alpha\beta_1}[k]|i\in\mathcal{S}_{t}\}$ be  positive non-increasing sequences of step sizes. Under MSPDQ-FL, it has that
\begin{eqnarray}\label{lemma7}
  \hspace{-27pt} &&\hspace{-7pt}\left\| \mathbf{W}_{t}^{\alpha}[k+1]-\mathbf{1}\overline{\mathbf{W}}_{t}^\alpha[k+1] \right\|\nonumber \\
   \hspace{-27pt} &\leq& \hspace{-7pt} 2\sum\nolimits_{l=0}\nolimits^{k}\lambda_{2,\mathbf{U}}^{k-l}\|\mathbf{\Delta}_{t}[l] \|+ \widetilde{W}_{t}^{\max} \sum\nolimits_{l=0}\nolimits^{k}\lambda_{2,\mathbf{U}}^{k-l} a_{\alpha\beta_1}^{\max}[l].
\end{eqnarray}
\end{lemma}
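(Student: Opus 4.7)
My plan is to rewrite the MSPDQ-FL update in a compact matrix form that isolates the consensus direction, so that the second largest eigenvalue $\lambda_{2,\mathbf{U}}$ appears naturally as a contraction rate. Using $\mathbf{q}_t^i[k]=\mathbf{w}_t^{i,\alpha}[k]+\mathbf{\Delta}_t^i[k]$ in \eqref{decomposition2a} and \eqref{aggregation4}, a direct rearrangement gives
\begin{equation*}
\mathbf{W}_t^{\alpha}[k+1] = (\mathbf{U}-\mathbf{A}[k])\mathbf{W}_t^{\alpha}[k] + \mathbf{A}[k]\mathbf{W}_t^{\beta_1}[k] - \epsilon\mathbf{P}\mathbf{\Delta}_t[k],
\end{equation*}
where $\mathbf{P}\triangleq \mathbf{I}-\tfrac{1}{M}\mathbf{1}\mathbf{1}^{\top}$ is the projector onto the subspace orthogonal to $\mathbf{1}$. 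The projector appears because the quantization error enters once locally through $-\epsilon\mathbf{q}_t^i[k]$ and once globally through $+\epsilon\mathbf{w}_t[k]=\tfrac{\epsilon}{M}\sum_j \mathbf{q}_t^j[k]$, so the common component is annihilated. Observe that $\mathbf{U}=(1-\epsilon)\mathbf{I}+\tfrac{\epsilon}{M}\mathbf{1}\mathbf{1}^{\top}$ is doubly stochastic, hence $\mathbf{1}^{\top}\mathbf{U}=\mathbf{1}^{\top}$ and $\mathbf{1}^{\top}\mathbf{P}=\mathbf{0}$.

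Next I would premultiply the display above by $\tfrac{1}{M}\mathbf{1}^{\top}$ to obtain
\begin{equation*}
\overline{\mathbf{W}}_t^{\alpha}[k+1] = \overline{\mathbf{W}}_t^{\alpha}[k] + \tfrac{1}{M}\mathbf{1}^{\top}\mathbf{A}[k]\bigl(\mathbf{W}_t^{\beta_1}[k]-\mathbf{W}_t^{\alpha}[k]\bigr),
\end{equation*}
in which the $\mathbf{\Delta}_t[k]$ term vanishes. Subtracting $\mathbf{1}\overline{\mathbf{W}}_t^{\alpha}[k+1]$ from the first display and using $\mathbf{U}\mathbf{1}=\mathbf{1}$ yields the consensus-error recursion
\begin{equation*}
\mathbf{E}_t^{\alpha}[k+1] = \mathbf{U}\mathbf{P}\mathbf{W}_t^{\alpha}[k] + \mathbf{P}\mathbf{A}[k]\bigl(\mathbf{W}_t^{\beta_1}[k]-\mathbf{W}_t^{\alpha}[k]\bigr) - \epsilon\mathbf{P}\mathbf{\Delta}_t[k],
\end{equation*}
where $\mathbf{E}_t^{\alpha}[k]\triangleq \mathbf{W}_t^{\alpha}[k]-\mathbf{1}\overline{\mathbf{W}}_t^{\alpha}[k]=\mathbf{P}\mathbf{W}_t^{\alpha}[k]$. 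Since $\mathbf{P}$ and $\mathbf{U}$ commute and $\mathbf{U}\mathbf{P}=\lambda_{2,\mathbf{U}}\mathbf{P}$, I get a clean geometric contraction on the consensus-error subspace: $\mathbf{U}\mathbf{P}\mathbf{W}_t^{\alpha}[k]=\lambda_{2,\mathbf{U}}\mathbf{E}_t^{\alpha}[k]$.

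Taking Frobenius norms, using $\|\mathbf{P}\|=1$, $\|\mathbf{A}[k]\|\leq a_{\alpha\beta_1}^{\max}[k]$, and $\epsilon<M/(M-1)\leq 2$, I obtain the scalar recursion
\begin{equation*}
\|\mathbf{E}_t^{\alpha}[k+1]\| \leq \lambda_{2,\mathbf{U}}\,\|\mathbf{E}_t^{\alpha}[k]\| + a_{\alpha\beta_1}^{\max}[k]\,\widetilde{W}_{t,k}^{\max} + 2\,\|\mathbf{\Delta}_t[k]\|,
\end{equation*}
and by the initialization $\mathbf{w}_t^{i,\alpha}[0]=\mathbf{w}_t^{j,\alpha}[0]$ for all $\{i,j\}\in\mathcal{S}_t$ required by Algorithm~1, the initial consensus error vanishes, $\mathbf{E}_t^{\alpha}[0]=\mathbf{0}$. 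Unrolling this linear recursion and upper-bounding each $\widetilde{W}_{t,l}^{\max}$ by $\widetilde{W}_t^{\max}$ (which is non-decreasing in $l$) yields exactly \eqref{lemma7}. The only potentially subtle step is verifying the commutation identity $\mathbf{U}\mathbf{P}=\lambda_{2,\mathbf{U}}\mathbf{P}$ and correctly identifying $\mathbf{P}$ as the operator that absorbs both the consensus subtraction and the quantization-error cancellation; once these are in place, the rest is a routine triangle-inequality argument and a geometric sum.
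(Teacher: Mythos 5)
Your proof is correct and follows essentially the same route as the paper's: both project the stacked visible submodels onto the subspace orthogonal to $\mathbf{1}$, derive the one-step recursion contracting at rate $\lambda_{2,\mathbf{U}}$ with the quantization term bounded by $2\|\mathbf{\Delta}_{t}[k]\|$ (your $\epsilon\mathbf{P}=\mathbf{I}-\mathbf{U}$ together with $\epsilon<2$ is exactly the paper's bound $\|\mathbf{I}-\mathbf{U}\|\leq 2$), and unroll from the zero initial deviation guaranteed by $\mathbf{w}_{t}^{i,\alpha}[0]=\mathbf{w}_{t}^{j,\alpha}[0]$. The only cosmetic difference is that you invoke the explicit eigen-identity $\mathbf{U}\mathbf{P}=\lambda_{2,\mathbf{U}}\mathbf{P}$, whereas the paper bounds $\|\mathbf{U}\mathbf{\Theta}\mathbf{W}_{t}^{\alpha}[k]\|\leq\lambda_{2,\mathbf{U}}\|\mathbf{\Theta}\mathbf{W}_{t}^{\alpha}[k]\|$ directly.
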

\begin{proof}

For the $k$-th communication round after the $t$-th learning round, let  $\mathbf{X}_t[k]:=\mathbf{\Theta}\mathbf{W}_{t+1}^{\alpha}[k]$ denote the visible submodel deviation from the average state, where $\mathbf{\Theta}=\mathbf{I}-\frac{1}{M}\mathbf{1}\mathbf{1}^{\top}$. To see its upper bound, we firstly arrange $\mathbf{X}_{t}[k+1]$ as follows:
  \begin{eqnarray}\label{35_1}
\mathbf{X}_{t}[k+1]
\hspace{-7pt} &=& \hspace{-7pt}\mathbf{U}\mathbf{W}_{t}^\alpha[k]+(\mathbf{I}-\mathbf{U})\left(\mathbf{W}_{t}^\alpha[k] -\mathbf{Q}_{t}[k] \right)\nonumber\\
    \hspace{-7pt} && \hspace{-7pt}+\mathbf{A}[k]\left(\mathbf{W}_{t}^{\beta_1}[k]-\mathbf{W}_{t}^\alpha[k] \right)-\mathbf{1}\overline{\mathbf{W}}_{t}^\alpha[k] \nonumber\\
    \hspace{-7pt} && \hspace{-7pt} -\mathbf{1}\sum_{i\in\mathcal{S}_{t}}\frac{1}{M} a_{i,\alpha\beta_1}[k]\left( \mathbf{w}_{t}^{i,\beta_1}[k]-\mathbf{w}_{t}^{i,\alpha}[k] \right)^{\top}\nonumber\\
    \hspace{-7pt} &=& \hspace{-7pt} \mathbf{U}\mathbf{\Theta}\mathbf{W}_{t}^\alpha[k] + (\mathbf{I}-\mathbf{U})\mathbf{\Delta}_{t}[k]\nonumber\\
    \hspace{-7pt} && \hspace{-7pt}+\mathbf{A}[k]\mathbf{\Theta}\left(\mathbf{W}_{t}^{\beta_1}[k]-\mathbf{W}_{t}^\alpha[k]\right).
  \end{eqnarray}

Notice that the matrix $\mathbf{U}$ is doubly stochastic under the condition $\epsilon\in(0,\frac{M}{M-1})$. Accordingly, taking Frobenius norm on both sides of \eqref{35_1} and building upon the facts that $\|\mathbf{U}\|< \lambda_{2,\mathbf{U}}$, $\|\mathbf{I}-\mathbf{U}\|\leq2$ and $\|\mathbf{\Theta}\|<1$ yield \par
 \vspace{-12pt}\begin{small}
\begin{eqnarray*}
\hspace{-10pt} && \hspace{-9pt}\|\mathbf{X}_{t}[k+1]\|\\
   \hspace{-10pt} &\leq& \hspace{-9pt} \|\mathbf{U}\Theta \mathbf{W}_{t}^\alpha[k]\|\hspace{-1pt}+\hspace{-1pt}\|(\mathbf{I}\hspace{-1pt}-\hspace{-1pt}\mathbf{U})\mathbf{\Delta}_{t}[k]\|\hspace{-1pt}+\hspace{-1pt}\|\mathbf{A}[k]\mathbf{\Theta}(\mathbf{W}_{t}^{\beta_1}[k]\hspace{-1pt}-\hspace{-1pt}\mathbf{W}_{t}^\alpha[k])\| \\
   \hspace{-10pt} &\leq& \hspace{-9pt} \lambda_{2,\mathbf{U}}\|\mathbf{\Theta} \mathbf{W}_{t}^\alpha[k]\| +2\|\mathbf{\Delta}_{t}[k]\|+a_{\alpha\beta_1}^{\max}[k] \|\mathbf{W}_{t}^{\beta_1}[k]-\mathbf{W}_{t}^\alpha[k]\| \\
  \hspace{-10pt} &\leq& \hspace{-9pt} \lambda_{2,\mathbf{U}}^{k+1}\|\mathbf{X}_{t}[0]\| +2\sum_{l=0}^{k}\lambda_{2,\mathbf{U}}^{k-l}\|\mathbf{\Delta}_{t}[l]\|+ \widetilde{W}_{t}^{\max} \sum_{l=0}^{k}\lambda_{2,\mathbf{U}}^{k-l} a_{\alpha\beta_1}^{\max}[l].
\end{eqnarray*}
\end{small}%
By recalling the initial setting of visible submodels after each learning round $t$, specifically, $\mathbf{w}_{t}^{i,\alpha}[0]=\mathbf{w}_{t}^{j,\alpha}[0]$ $\forall \{i,j\}\in\mathcal{S}_{t}$, which implies that $\|\mathbf{X}_{t}[0]\|=0$, then the above inequality further turns into \eqref{lemma7}.
\end{proof}

\begin{lemma}
Let $\epsilon\in(0,\frac{M}{M-1})$ and $\tilde{\epsilon}=\max\{1,\epsilon\}$. For any $i\in\mathcal{S}_{t}$, define $a_{i,\alpha\beta_1}[k]=\frac{\gamma_i}{k+1}$ such that $a_{i,\alpha\beta_1}[k]\leq 2a_{i,\alpha\beta_1}[2k]$, where $\gamma_i>0$. If Assumption 3 holds, then it has the following properties of MSPDQ-FL: $\mathbf{w}_{t}^{i,\alpha}[k+1]\in\mathcal{R}_{t}^i[k+1]$
 and
$\|\mathbf{\Delta}_{t}^i[k]\|\leq \frac{\sqrt{d}\pi_{t}}{2^B-1} a_{\alpha\beta_1}^{\max}[k]$.
\end{lemma}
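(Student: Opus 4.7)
My plan is to prove both assertions simultaneously by induction on the communication round $k$, carrying as the invariant that the updated visible submodel $\mathbf{w}_t^{i,\alpha}[k+1]$ always lies inside the dynamic quantization interval $\mathcal{R}_t^i[k+1]$. Once this containment is in hand, the second assertion is a direct consequence: with $l$ knobs uniformly spaced over an interval of length $\pi_t a_{\alpha\beta_1}^{\max}[k]$ and $l\leq 2^B$, each component of $\mathbf{\Delta}_t^i[k]$ is bounded by the spacing $\tfrac{\pi_t a_{\alpha\beta_1}^{\max}[k]}{2^B-1}$, so summing over $d$ components yields the stated $\ell_2$ bound. The base case is immediate from the initialization in Algorithm 1, where $\mathbf{w}_t^{i,\alpha}[0]=\mathbf{q}_t^i[0]$ coincides with a knob of $\mathcal{R}_t^i[0]=[w_{\min}^{i,\alpha}\mathbf{1},w_{\max}^{i,\alpha}\mathbf{1}]$.

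For the inductive step, I would subtract $\mathbf{q}_t^i[k]$ from both sides of \eqref{decomposition2a} to isolate
\begin{equation*}
\mathbf{w}_t^{i,\alpha}[k+1]-\mathbf{q}_t^i[k]=-\mathbf{\Delta}_t^i[k]+\epsilon\bigl(\mathbf{w}_t[k]-\mathbf{q}_t^i[k]\bigr)+a_{i,\alpha\beta_1}[k]\bigl(\mathbf{w}_t^{i,\beta_1}[k]-\mathbf{w}_t^{i,\alpha}[k]\bigr),
\end{equation*}
and then bound the three contributions separately. The first is handled by the inductive quantization-error bound. The second, via the aggregation identity $\mathbf{w}_t[k]=\tfrac{1}{M}\sum_{j\in\mathcal{S}_t}\mathbf{q}_t^j[k]$, reduces to disagreements $\mathbf{q}_t^j[k]-\mathbf{q}_t^i[k]$, which I would further tie back to the consensus gap $\|\mathbf{W}_t^\alpha[k]-\mathbf{1}\overline{\mathbf{W}}_t^\alpha[k]\|$ controlled by Lemma 9 plus the accumulated quantization noise that appears in that same bound. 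The third term is absorbed by the defined quantity $\widetilde{W}_{t,k}^{\max}$ and the step size $a_{\alpha\beta_1}^{\max}[k]$. The specific form $\pi_t=\tfrac{8(\epsilon+\tilde{\epsilon}+\tilde{\epsilon}\widetilde{W}_{t,k}^{\max})}{1-\lambda_{2,\mathbf{U}}}$ is engineered precisely so that the sum of these three pieces does not exceed $\tfrac{\pi_t a_{\alpha\beta_1}^{\max}[k]}{2}$, thereby placing $\mathbf{w}_t^{i,\alpha}[k+1]$ inside $\mathcal{R}_t^i[k+1]$ and closing the induction.

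The main obstacle I foresee is the middle term: bounding $\mathbf{w}_t[k]-\mathbf{q}_t^i[k]$ couples the dynamics across all clients in $\mathcal{S}_t$ and forces one to track an accumulated sum of past quantization errors through the geometric factor $\lambda_{2,\mathbf{U}}^{k-l}$ appearing in Lemma 9. To resolve this, I would invoke Assumption 3, which constrains $B$ relative to $\pi_t$ (so that the ratio $\tfrac{\sqrt{Md}\,\pi_t}{2^B-1}$ is large enough to dominate the residual terms), together with the step-size monotonicity $a_{i,\alpha\beta_1}[k]\leq 2a_{i,\alpha\beta_1}[2k]$, which is exactly what allows reindexing the accumulated bound back to $a_{\alpha\beta_1}^{\max}[k]$ rather than to an earlier—and hence larger—value. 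Once the containment invariant propagates through induction, the $\ell_2$ error bound follows by the elementary spacing argument described above.
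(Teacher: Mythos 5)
Your proposal is correct and follows essentially the same route as the paper: the paper likewise subtracts $\mathbf{q}_t^i[k]$ from \eqref{decomposition2a}, splits the result into the quantization error, the $\epsilon$-weighted aggregation disagreement (bounded via the consensus gap of Lemma 9 plus accumulated quantization noise), and the $a_{i,\alpha\beta_1}[k]$-weighted submodel gap absorbed by $\widetilde{W}_{t,k}^{\max}$, then closes an induction on the containment $\mathbf{w}_t^{i,\alpha}[k]\in\mathcal{R}_t^i[k]$ using Assumption 3 and the reindexing inequality $a_{\alpha\beta_1}^{\max}[\lceil k/2\rceil]\leq 2a_{\alpha\beta_1}^{\max}[k]$. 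The only detail worth tightening is the final spacing argument, which (as in the paper) implicitly uses $l=2^B$ so that $\frac{1}{l-1}\leq\frac{1}{2^B-1}$.
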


\begin{proof}
By subtracting $\mathbf{q}_{t}^i[k]$ on both sides of \eqref{decomposition2},
it has that\par
 \vspace{-12pt} \begin{small}
\begin{eqnarray}\label{lemma837_1}
  \hspace{-24pt}&&\hspace{-7pt}\| \mathbf{w}_{t}^{i,\alpha}[k+1]-\mathbf{q}_{t}^i[k] \|\nonumber\\
   \hspace{-24pt}&\leq &\hspace{-7pt}\left\|\mathbf{\Delta}_{t}^i[k] \right\|+\epsilon\Big\|\sum\nolimits_{j\in S_{t}}\frac{1}{M}\big(\mathbf{q}_{t}^j[k]-\mathbf{q}_{t}^{i}[k]\big) \Big\| \nonumber\\ 
   \hspace{-24pt}&&\hspace{-7pt} +a_{i,\alpha\beta_1}[k]\left\| \mathbf{w}_{t}^{i,\beta_1}[k]- \mathbf{w}_{t}^{i,\alpha}[k]\right\|\nonumber\\
    \hspace{-24pt}&\overset{(a)}{\leq}&\hspace{-7pt} (1+\epsilon)\left\| \mathbf{\Delta}_{t}[k] \right\|+\epsilon\left\| \mathbf{W}_{t}^\alpha[k]-\mathbf{1}\overline{\mathbf{W}}_{t}^\alpha[k] \right\|\nonumber\\
    \hspace{-24pt}&&\hspace{-7pt}+a_{\alpha\beta_1}^{\max}[k] \left\| \mathbf{W}_{t}^{\beta_1}[k]- \mathbf{W}_{t}^{\alpha}[k]\right\|\nonumber\\
    \hspace{-24pt}&\overset{(b)}{\leq}&\hspace{-7pt} (1+\epsilon)\left\|\mathbf{\Delta}_{t}[k]\right\|+2\epsilon\sum\nolimits_{l=0}\nolimits^{k-1}\lambda_{2,\mathbf{U}}^{k-1-l}\|\mathbf{\Delta}_{t}(l)\|\nonumber\\
    \hspace{-24pt}&&\hspace{-7pt}+\epsilon \widetilde{W}_{t}^{\max} \sum\nolimits_{l=0}\nolimits^{k-1}\lambda_{2,\mathbf{U}}^{k-l-1}a_{\alpha\beta_1}^{\max}[l]+\widetilde{W}_{t}^{\max} a_{\alpha\beta_1}^{\max}[k]\nonumber\\
    \hspace{-24pt}&{\leq}&\hspace{-7pt} (\epsilon+\tilde{\epsilon})\sum\nolimits_{l=0}\nolimits^{k}\lambda_{2,\mathbf{U}}^{k-l}\| \mathbf{\Delta}_{t}[l] \| + \tilde{\epsilon}\widetilde{W}_{t,k}^{\max}\sum\nolimits_{l=0}\nolimits^{k}\lambda_{2,\mathbf{U}}^{k-l}a_{\alpha\beta_1}^{\max}[l]
\end{eqnarray}
\end{small}%
where the inequality $(a)$ is obtained due to the inequality
  $\|\sum_{j\in S_{t}}\frac{1}{M}(\mathbf{q}_{t}^j[k]-\mathbf{q}_{t}^{i}[k]) \|  \leq\|\mathbf{W}_{t}^\alpha[k]-\mathbf{1}\overline{\mathbf{W}}_{t}^\alpha[k] \|+\|\mathbf{\Delta}_{t}[k]\|$.
By combining with the upper bound of $\|\mathbf{W}_{t}^\alpha[k]-\mathbf{1}\overline{\mathbf{W}}_{t}^\alpha[k]\|$ in Lemma 9, we obtain the inequality $(b)$.

It is clear that $\mathbf{w}_{t}^{i,\alpha}[0]\in \mathcal{R}_{t}^i[0]$ under initial settings as stated in Section \uppercase\expandafter {\romannumeral4}.A. Suppose that $\mathbf{w}_{t}^{i,\alpha}[k]\in \mathcal{R}_{t}^i[k]$ holds for some ${k}>0$. Then, for any $k_3\in[0,k]$, it has that
$ \|\mathbf{\Delta}_{t}^i[k_3]\|\leq \frac{\sqrt{d}\pi_{t}}{2^B-1}a_{\alpha\beta_1}^{\max}[k_3], \forall i\in\mathcal{S}_{t}
$,
which together with Assumption 3 yields $\|\mathbf{\Delta}_{t}[k_3]\| \leq a_{\alpha\beta_1}^{\max}[k_3]$. By substituting it into \eqref{lemma837_1}, we arrive at\par
\vspace{-10pt}
\begin{small}
\begin{eqnarray}
   \hspace{-12pt}&&\hspace{-10pt} \left\|\mathbf{w}_{t}^{i,\alpha}[k+1]-\mathbf{q}_{t}^i[k]\right\| \nonumber\\
   \hspace{-12pt}&\leq&\hspace{-10pt}(\epsilon+\tilde{\epsilon})\sum\nolimits_{l=0}\nolimits^{k}\lambda_{2,U}^{k-l}a_{\alpha\beta_1}^{\max}(l) + \tilde{\epsilon}\widetilde{W}_{t,k}^{\max}\sum\nolimits_{l=0}\nolimits^{k}\lambda_{2,\mathbf{U}}^{k-l}a_{\alpha\beta_1}^{\max}(l) \nonumber\\
   \hspace{-12pt}&\leq&\hspace{-10pt}(\epsilon+\tilde{\epsilon}+\tilde{\epsilon}\widetilde{W}_{t,k}^{\max})\Big[\sum\nolimits_{l=0}\nolimits^{\lfloor \frac{k}{2}\rfloor}\lambda_{2,\mathbf{U}}^{k-l}a_{\alpha\beta_1}^{\max}[l]+ \sum\nolimits_{l= \lceil \frac{k}{2}\rceil }\nolimits^{k}\lambda_{2,\mathbf{U}}^{k-l}a_{\alpha\beta_1}^{\max}[l]\Big]\nonumber\\
   \hspace{-12pt}&\overset{(a)}{<}&\hspace{-10pt}(\epsilon+\tilde{\epsilon}+\tilde{\epsilon}\widetilde{W}_{t,k}^{\max})\frac{\gamma_{\max}\lambda_{2,\mathbf{U}}^{\lceil \frac{k}{2}\rceil}+ a_{\alpha\beta_1}^{\max}[\lceil\frac{k}{2}\rceil]}{1-\lambda_{2,\mathbf{U}}}\nonumber\\
   \hspace{-12pt}&{\leq}&\hspace{-10pt}\frac{4\gamma_{\max}(\epsilon+\tilde{\epsilon}+\tilde{\epsilon}\widetilde{W}_{t,k}^{\max})}{(1-\lambda_{2,\mathbf{U}})(k+1)}\nonumber\\
   \hspace{-12pt}&=&\hspace{-10pt}\frac{\pi_{t}}{2}a_{\alpha\beta_1}^{\max}[k],
\end{eqnarray}
\end{small}%
where the inequality $(a)$ follows from the facts that $1-\lambda_{2,\mathbf{U}}\in (0,1)$ and $a_{\alpha\beta_1}^{\max}[0]=\gamma_{\max}>a_{\alpha\beta_1}^{\max}[k]$ for $k>0$. The last inequality is obtained by $\lambda_{2,\mathbf{U}}^k\leq\frac{1}{\gamma_{\max}}a_{\alpha\beta_1}^{\max}[k]$ and $a_{\alpha\beta_1}^{\max}[\lceil \frac{k}{2}\rceil]\leq 2a_{\alpha\beta_1}^{\max}[k]$. Recalling that $k$ can be an arbitrary number, therefore it always has that $\mathbf{w}_{t}^{i,\alpha}[k+1]\in\mathcal{R}_{t}^i[k+1]$ and the quantization error associated with the dynamic quantization interval is bounded by $\frac{\sqrt{d}\pi_{t}}{2^B-1}a_{\alpha\beta_1}^{\max}[k]$.
\end{proof}

\begin{lemma}
(Properties of stochastic quantization):
With the stochastic quantization as in \eqref{sq}, each split submodel of visible part is unbiasedly estimated if Assumption 3 holds, i.e.,
  $\mathbb{E}_{SQ}\{\mathbf{q}_{t}^i[k]\}=\mathbf{w}_{t}^{i,\alpha}[k]$,
and the associated quantization error is bounded by
$
  \mathbb{E}_{SQ}\left\{\|\mathbf{q}_{t}^i[k]-\mathbf{w}_{t}^{i,\alpha}[k]\|^2\right\}\leq\frac{d\pi_{t}^2(a_{\alpha\beta_1}^{\max}[k-1])^2}{4(2^B-1)^2}$.
\end{lemma}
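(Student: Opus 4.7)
The plan is to reduce both claims to a coordinate-wise calculation for the scalar stochastic quantizer defined in \eqref{sq}, and then translate the per-coordinate bounds into the vector statements via independence across coordinates. First, I would fix an index $j\in\{1,\dots,d\}$ and, without loss of generality, treat the nonnegative case $w_t^{i,j,\alpha}[k]\ge 0$ (the negative case follows by the symmetric branch in \eqref{sq}). Writing $w\triangleq w_t^{i,j,\alpha}[k]\in[c_\tau,c_{\tau+1})$ and $p\triangleq (w-c_\tau)/(c_{\tau+1}-c_\tau)\in[0,1]$, a direct evaluation of the definition gives $\mathbb{E}_{SQ}\{\mathcal{Q}(w)\}=(1-p)c_\tau+p\,c_{\tau+1}=w$. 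Stacking over $j$ and using that the $d$ coordinate quantizations are independent yields $\mathbb{E}_{SQ}\{\mathbf{q}_t^i[k]\}=\mathbf{w}_t^{i,\alpha}[k]$, which is the unbiasedness statement.

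Next, for the mean-squared error I would compute the per-coordinate variance in the same parametrization:
\begin{equation*}
\mathbb{E}_{SQ}\bigl\{(\mathcal{Q}(w)-w)^2\bigr\}=(1-p)(c_\tau-w)^2+p(c_{\tau+1}-w)^2=p(1-p)(c_{\tau+1}-c_\tau)^2\le \tfrac14(c_{\tau+1}-c_\tau)^2 .
\end{equation*}
This uses only $(c_\tau-w)=-p(c_{\tau+1}-c_\tau)$ and $(c_{\tau+1}-w)=(1-p)(c_{\tau+1}-c_\tau)$, together with the elementary inequality $p(1-p)\le 1/4$.

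The third step is to plug in the size of the dynamic quantization grid. By the update rule \eqref{interval}, the quantization interval used at communication round $k$ is $\mathcal{R}_t^i[k]$ of total width $\pi_t a_{\alpha\beta_1}^{\max}[k-1]$, uniformly split into $l=2^B$ knobs, hence the spacing satisfies $c_{\tau+1}-c_\tau=\pi_t a_{\alpha\beta_1}^{\max}[k-1]/(2^B-1)$. The crucial prerequisite for this identity to apply to $\mathbf{w}_t^{i,\alpha}[k]$ is precisely the containment $\mathbf{w}_t^{i,\alpha}[k]\in\mathcal{R}_t^i[k]$, which is already established in Lemma 10 under Assumption 3 and the chosen parameters. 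Combining this spacing with the coordinate-wise variance bound and summing over $j=1,\dots,d$ (again exploiting coordinate independence of the quantizer) produces
\begin{equation*}
\mathbb{E}_{SQ}\bigl\{\|\mathbf{q}_t^i[k]-\mathbf{w}_t^{i,\alpha}[k]\|^2\bigr\}\le \frac{d\,\pi_t^2\,(a_{\alpha\beta_1}^{\max}[k-1])^2}{4(2^B-1)^2},
\end{equation*}
which is exactly the claimed inequality.

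I do not anticipate a genuine obstacle here; the only subtlety is verifying that the quantization input always lies in the currently active dynamic interval so that the identity for $c_{\tau+1}-c_\tau$ is legitimate, and that step is handled by directly invoking Lemma 10. The rest is a one-line scalar dithered-quantizer computation followed by summation over coordinates.
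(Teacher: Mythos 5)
Your proposal is correct and follows essentially the same route as the paper's proof: a per-coordinate computation showing unbiasedness, the variance bound $p(1-p)(c_{\tau+1}-c_\tau)^2\le\tfrac14(c_{\tau+1}-c_\tau)^2$ (the paper writes this as bounding the two-term sum by $((c_{\tau+1}-c_\tau)/2)^2$), substitution of the dynamic grid spacing justified by the containment $\mathbf{w}_t^{i,\alpha}[k]\in\mathcal{R}_t^i[k]$ from Lemma 10, and summation over the $d$ independent coordinates. Your version is merely a more explicit write-up of the same calculation (including the same implicit identification $l=2^B$ used by the paper).
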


\begin{proof}
  Proof of Lemma 11 is trivial, and has been discussed in the existing literature. In Lemma 10, it shows that $\mathbf{w}_{t+1}^{i,\alpha}[k]\in\mathcal{R}_{t+1}^i[k]$ for all $k$ after any $(t+1)$-th learning round, thereby it is reasonable to assume that any element of $\mathbf{w}_{t+1}^{i,\alpha}[k]$, i.e., ${w}_{t+1}^{i,j,\alpha}[k]$, is located in the interval $[c_\tau,c_{\tau+1})$ for some $\tau\in\{0,...,l-1\}$. With some calculations, it has $\mathbb{E}_{SQ}\left\{q_{t+1}^{i,j}[k]\right\}=w_{t+1}^{i,j,\alpha}[k]$.

For $\mathbf{w}_{t+1}^{i,\alpha}[k]=\big[w_{t+1}^{i,1,\alpha}[k],...,w_{t+1}^{i,d,\alpha}[k]\big]^{\top}$, it is clear that $\mathbb{E}_{SQ}\left\{\mathbf{q}_{t+1}^{i}[k]\right\} = \mathbf{w}_{t+1}^{i,\alpha}[k]$.

The quantization error is given by
\begin{eqnarray*}\label{lemma9a}
  \hspace{-7pt}&&\hspace{-7pt}\mathbb{E}_{SQ}\left\{\left\|q_{t+1}^{i,j}[k] - w_{t+1}^{i,j,\alpha}[k]\right\|^2\right\} \nonumber\\
  \hspace{-7pt}&=&\hspace{-7pt} (c_\tau-w_{t+1}^{i,j,\alpha}[k])^2 {Pr}\left\{q_{t+1}^{i,j}[k]=c_{\tau}\right\}\nonumber\\
        \hspace{-7pt}&&\hspace{-7pt}+(c_{\tau+1}-w_{t+1}^{i,j,\alpha}[k])^2{Pr}\left\{q_{t+1}^{i,j}[k]=c_{\tau+1}\right\}\nonumber\\
  \hspace{-7pt}&\leq&\hspace{-7pt}  (\frac{c_{\tau+1}-c_\tau}{2})^2\leq\frac{\pi_{t}^2(a_{\alpha\beta_1}^{\max}[k-1])^2}{4(2^B-1)^2}
\end{eqnarray*}
where the last inequality is obtained by substituting into the magnitude of dynamic interval at the $k$-th communication round, i.e.,
  $c_{\tau+1}-c_\tau=\frac{\pi_{t} a_{\alpha\beta_1}^{\max}[k-1]}{l-1}$.
Meanwhile, it has
\begin{eqnarray*}
  \hspace{-10pt}&&\hspace{-7pt}\mathbb{E}_{SQ}\left\{\|\mathbf{q}_{t+1}^i[k]-\mathbf{w}_{t+1}^{i,\alpha}[k]\|^2\right\}\\
  \hspace{-10pt}&=&\hspace{-7pt}\sum_{j=1}^{d}\mathbb{E}_{SQ}\left\{\|q_{t+1}^{i,j}[k]-w_{t+1}^{i,j,\alpha}[k]\|^2\right\}\\
  \hspace{-10pt}&\leq&\hspace{-7pt} \frac{d\pi_{t}^2 (a_{\alpha\beta_1}^{\max}[k-1])^2}{4(2^B-1)^2}.
\end{eqnarray*}
\end{proof}

\begin{lemma} (Unbiased and variance bounded quantization):
Let Assumption 3 hold, under MSPDQ-FL, we have
$\mathbb{E}_{SQ}\left\{\overline{\mathbf{w}}_{t+1}\right\}=\overline{\mathbf{v}}_{t+1}$ and
$\mathbb{E}_{SQ}\left\{\|\overline{\mathbf{w}}_{t+1}-\overline{\mathbf{v}}_{t+1}\|^2\right\}\leq \frac{d\pi_{t+1}^2 (a_{\alpha\beta_1}^{\max}[K_t-1])^2}{4M(2^B-1)^2}$.
\end{lemma}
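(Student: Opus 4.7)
The plan is to reduce the claim to a single-round quantization error analysis at the final communication step $k=K_t$, and then exploit conditional independence of the per-client stochastic quantizers across $\mathcal{S}_{t+1}$ to obtain the $1/M$ factor in the variance. First I would unpack the compact reformulation (50): since $\mathbf{v}_{t+1}^i$ and $\mathbf{w}_{t+1}^i$ are driven by the same MSPDQ-FL iterates $\mathbf{q}_{t+1}^j[k]$ for $k<K_t$, the only disagreement between them is that $\mathbf{v}_{t+1}^i$ averages the un-quantized final submodels $\mathbf{w}_{t+1}^{i,\alpha}[K_t]$ while $\mathbf{w}_{t+1}^i$ averages their quantizations $\mathbf{q}_{t+1}^i[K_t]$. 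This yields the clean decomposition
\[
\overline{\mathbf{w}}_{t+1}-\overline{\mathbf{v}}_{t+1} = \frac{1}{M}\sum_{i\in\mathcal{S}_{t+1}}\bigl(\mathbf{q}_{t+1}^i[K_t] - \mathbf{w}_{t+1}^{i,\alpha}[K_t]\bigr) = \frac{1}{M}\sum_{i\in\mathcal{S}_{t+1}}\mathbf{\Delta}_{t+1}^i[K_t].
\]

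Given this decomposition, the unbiasedness claim is immediate: Lemma~10 (whose hypotheses are activated by Assumption~3) places each quantization input $\mathbf{w}_{t+1}^{i,\alpha}[K_t]$ inside the dynamic quantization interval $\mathcal{R}_{t+1}^i[K_t]$, so Lemma~11 gives $\mathbb{E}_{SQ}\{\mathbf{\Delta}_{t+1}^i[K_t]\}=0$, and linearity of expectation across the $M$ summands closes the first half. For the variance bound I would square the decomposition and use that the Bernoulli randomness driving each client's quantizer is independent across $i\in\mathcal{S}_{t+1}$ conditional on the pre-quantization iterates. All cross-terms then vanish in $\mathbb{E}_{SQ}$, leaving $\frac{1}{M^2}\sum_{i\in\mathcal{S}_{t+1}}\mathbb{E}_{SQ}\{\|\mathbf{\Delta}_{t+1}^i[K_t]\|^2\}$; bounding each summand by Lemma~11 at $k=K_t$ (which contributes the factor $(a_{\alpha\beta_1}^{\max}[K_t-1])^2$ through the interval-width formula $c_{\tau+1}-c_\tau=\pi_{t+1} a_{\alpha\beta_1}^{\max}[K_t-1]/(l-1)$) and summing $|\mathcal{S}_{t+1}|=M$ terms produces the stated inequality.

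The main obstacle I anticipate is the first step: making rigorous the informal claim that the two trajectories ``differ only in the last quantization.'' The MSPDQ-FL update (52) makes each $\mathbf{w}_{t+1}^{i,\alpha}[k+1]$ depend recursively on the \emph{quantized} intermediates $\mathbf{q}_{t+1}^i[k]$ and on the quantized aggregate $\mathbf{w}_{t+1}[k]$, so one must verify that the ``unquantized final state'' $\mathbf{w}_{t+1}^{i,\alpha}[K_t]$ defining $\mathbf{v}_{t+1}^i$ is computed along the same intermediate path used for $\mathbf{w}_{t+1}^i$ rather than along a hypothetical quantization-free trajectory; otherwise quantization errors from all $K_t$ rounds would accumulate and ruin the clean bound. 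Once this notational convention in the reformulation is pinned down, the remainder is a routine application of Lemma~11 combined with the conditional independence argument, and no further nontrivial estimates are needed.
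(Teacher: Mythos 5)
Your proposal is correct and follows essentially the same route as the paper's proof: the identical decomposition $\overline{\mathbf{w}}_{t+1}-\overline{\mathbf{v}}_{t+1}=\frac{1}{M}\sum_{i\in\mathcal{S}_{t+1}}\mathbf{\Delta}_{t+1}^i[K_t]$, unbiasedness via Lemma 11 (with Lemma 10 guaranteeing the input lies in the shrunken interval), and vanishing cross-terms by independence of the per-client quantizers, yielding the $1/M$ factor. Your concern about the two trajectories sharing the same quantized intermediate path is exactly the convention the paper adopts (namely $\text{Aggr}(\mathbf{r}_{t+1}^i)=\mathbf{w}_{t+1}^{i,\alpha}[K_t]$ along the dynamics \eqref{decomposition2}, so only the final quantization differs), and you resolve it correctly.
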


\begin{proof}
Due to the unbiasedness of partial model aggregation, it has
$ \overline{\mathbf{v}}_{t+1}=\mathbb{E}_{RS}\{\sum\nolimits_{i\in\mathcal{S}_{t+1}}\frac{1}{M}\text{Aggr}(\mathbf{r}_{t+1}^i)\}$ and $ \overline{\mathbf{w}}_{t+1}=\mathbb{E}_{RS}\{\sum\nolimits_{i\in\mathcal{S}_{t+1}}\frac{1}{M}\text{Aggr}(\mathbf{q}_{t+1}^i)\}$ that are similar to \eqref{22_0} and \eqref{22_1}. Meanwhile, noting that after $K_t$ communication rounds, it has the following facts, i.e., $\text{Aggr}(\mathbf{q}_{t+1}^i)=\mathbf{q}_{t+1}^i[K_t]$ and $\text{Aggr}(\mathbf{r}_{t+1}^i)=\mathbf{w}_{t+1}^{i,\alpha}[K_t]$. Then, one has
\begin{eqnarray}
  \hspace{-7pt}\mathbb{E}_{SQ}\{\overline{\mathbf{w}}_{t+1}\}\hspace{-7pt}&= &\hspace{-7pt} \mathbb{E}_{SQ}\left\{\mathbb{E}_{RS}\Big\{\sum\nolimits_{i\in\mathcal{S}_{t+1}}\frac{1}{M}\mathbf{q}_{t+1}^i[K_t]\Big\}\right\}\nonumber\\
  \hspace{-7pt}\hspace{-7pt}&\overset{(a)}{=}&\hspace{-7pt} \mathbb{E}_{RS}\Big\{\sum\nolimits_{i\in\mathcal{S}_{t+1}}\frac{1}{M}\mathbf{w}_{t+1}^{i,\alpha}[K_t]\Big\}\nonumber\\
 \hspace{-7pt}\hspace{-7pt}& =&\hspace{-7pt} \mathbb{E}_{RS}\Big\{ \sum\nolimits_{i\in\mathcal{S}_{t+1}}\frac{1}{M}\text{Aggr}(\mathbf{r}_{t+1}^i)\Big\}=\overline{\mathbf{v}}_{t+1}
\end{eqnarray}
where the equality $(a)$ is obtained in light of Lemma 11 and the independence between $\mathbb{E}_{RS}$ and $\mathbb{E}_{SQ}$.

Following the similar arguments as in \eqref{bottom}, and using Lemma 11 lead to
  \begin{eqnarray}
     \hspace{-7pt}&&\hspace{-7pt} \mathbb{E}_{SQ}\left\{\left\|\overline{\mathbf{w}}_{t+1}-\overline{\mathbf{v}}_{t+1}\right\|^2\right\}\nonumber\\
     \hspace{-7pt}&=&\hspace{-7pt} \mathbb{E}_{SQ}\bigg\{\Big\|\sum\nolimits_{i\in\mathcal{S}_{t+1}} \frac{1}{M}\mathbf{q}_{t+1}^i[K_t]-\sum\nolimits_{i\in\mathcal{S}_{t+1}}\frac{1}{M}\mathbf{w}_{t+1}^{i,\alpha}[K_t] \Big\|^2\bigg\}\nonumber\\
     \hspace{-7pt}&=&\hspace{-7pt} \sum\nolimits_{i\in\mathcal{S}_{t+1}}\frac{1}{M^2}\mathbb{E}_{SQ}\left\{\left\|\mathbf{\Delta}_{t+1}^i[K_t] \right\|^2\right\}\nonumber\\
     \hspace{-7pt}&&\hspace{-7pt}+ \mathbb{E}_{SQ}\bigg\{\sum\nolimits_{i,j\in\mathcal{S}_{t+1},i\neq j}\left\langle \frac{1}{M}\mathbf{\Delta}_{t+1}^i[K_t],\frac{1}{M}\mathbf{\Delta}_{t+1}^j[K_t] \right\rangle\bigg\} \nonumber\\
     \hspace{-7pt}&\leq&\hspace{-7pt}  \frac{d\pi_{t+1}^2 (a_{\alpha\beta_1}^{\max}[K_t-1])^2}{4M(2^B-1)^2}
  \end{eqnarray}
where the inequality holds for $\mathbb{E}_{SQ}\left\{\mathbf{\Delta}_{t+1}^i[k]\right\}=0$, and $\mathbf{\Delta}_{t+1}^i[k]$ and $\mathbf{\Delta}_{t+1}^j[k]$ are independent for any $k$ if $i\neq j$.
\end{proof}

\subsection{Proof of Theorem 3}
\begin{proof}
Similar to the proof of Theorem 2, we rewrite the expectation over $\|\overline{\mathbf{w}}_{t+1}-\mathbf{w}^*\|^2$ as
\begin{eqnarray}\label{37_1}
 \hspace{-25pt}&&\hspace{-7pt} \mathbb{E}\left\{\|\overline{\mathbf{w}}_{t+1}-\mathbf{w}^*\|^2\right\}
  ={2\mathbb{E} \left\{\left\langle \overline{\mathbf{w}}_{t+1}-\overline{\mathbf{v}}_{t+1},\overline{\mathbf{v}}_{t+1}-{\mathbf{w}}^*  \right\rangle\right\}} \nonumber\\
  \hspace{-25pt}&&\hspace{-7pt}\qquad\qquad+\mathbb{E}\left\{\|\overline{\mathbf{w}}_{t+1}-\overline{\mathbf{v}}_{t+1}\|^2\right\}+\mathbb{E}\left\{\|\overline{\mathbf{v}}_{t+1}-{\mathbf{w}}^*\|^2\right\}.
\end{eqnarray}
If $t+1\notin \mathcal{I}_E$, it has $\overline{\mathbf{w}}_{t+1}=\overline{\mathbf{v}}_{t+1}$, leading to
\begin{eqnarray}\label{38_1}
\hspace{-25pt}&&\hspace{-7pt}\mathbb{E}\left\{\|\overline{\mathbf{w}}_{t+1}-\mathbf{w}^*\|^2\right\}
\leq (1-\eta_t\mu)\mathbb{E}\left\{\|\overline{\mathbf{w}}_t-\mathbf{w}^*\|^2\right\} \nonumber\\
    \hspace{-25pt}&&\hspace{-7pt}\qquad\qquad +\eta_t^2\Big[ \sum\nolimits_{i=1}\nolimits^{N}p_i^2\sigma_i+6L\Gamma+8(E-1)^2G\Big],
\end{eqnarray}
which is exactly the result as in \eqref{t+1not}.
If $t+1\in \mathcal{I}_E$, it has
\begin{eqnarray}\label{39_1}
 \hspace{-12pt}&&\hspace{-8pt}\mathbb{E}\left\{\|\overline{\mathbf{w}}_{t+1}-\mathbf{w}^*\|^2\right\}\nonumber\\
\hspace{-12pt}&\overset{(a)}{=}&\hspace{-8pt}\mathbb{E}_{SQ}\left\{\|\overline{\mathbf{w}}_{t+1}-\overline{\mathbf{v}}_{t+1}\|^2\right\}+\mathbb{E}\left\{\|\overline{\mathbf{v}}_{t+1}-\mathbf{w}^*\|^2\right\}\nonumber\\
 \hspace{-12pt}&\overset{(b)}{\leq}&\hspace{-8pt}\frac{d\pi_{t+1}^2 (a_{\alpha\beta_1}^{\max}[K_t-1])^2}{4M(2^B-1)^2}+\mathbb{E}\left\{\|\overline{\mathbf{v}}_{t+1}-\mathbf{w}^*\|^2\right\},
\end{eqnarray}
where the equality $(a)$ and the first term on the right-hand side of the inequality $(b)$ follow from Lemma 12.

Note that $K_t=\max\left\{\lceil\log_{\lambda}^{2/\mu(\vartheta+t)}\rceil, \lceil\frac{\mu(\vartheta+t)}{2}\rceil \right\}$ and $a_{\alpha\beta_1}^{\max}[K_t-1]=\frac{\gamma_{\max}}{K_t}$. Via a simple calculation, we have
$a_{\alpha\beta_1}^{\max}[K_t-1]\leq\gamma_{\max}\eta_t$
 and
$
  \mathbb{E}\left\{\|\overline{\mathbf{v}}_{t+1}-\overline{\mathbf{u}}_{t+1}\|^2\right\}\leq \eta_t^2D_1
$
which is obtained in the same way as that in \eqref{lemma6lasteq}. Next, by resorting to similar arguments used from \eqref{27_1} to \eqref{t+1in}, one can derive\par
 \vspace{-12pt}\begin{small}
 \begin{eqnarray}\label{41_1}
   \hspace{-13pt}\mathbb{E}\left\{\|\overline{\mathbf{v}}_{t+1}\hspace{-1pt}-\hspace{-1pt}\mathbf{w}^*\|^2 \right\} \hspace{-8pt}&\leq&\hspace{-8pt} (1\hspace{-1pt}-\hspace{-1pt}\eta_t\mu)\mathbb{E}\left\{\|\overline{\mathbf{w}}_{t}\hspace{-1pt}-\hspace{-1pt}\mathbf{w}^*\|^2\right\}
   \hspace{-1pt}+\hspace{-1pt}\eta_t^2D_2,
 \end{eqnarray}%
 \end{small}%
where $D_2$ is defined in \eqref{fixediteration}. Plugging the inequality $a_{\alpha\beta_1}^{\max}[K_t-1]\leq\gamma_{\max}\eta_t$ and \eqref{41_1} into \eqref{39_1} yields
\begin{eqnarray}\label{42_1}
   \hspace{-7pt}\mathbb{E}\left\{\|\overline{\mathbf{w}}_{t+1}-\mathbf{w}^*\|^2\right\} \hspace{-7pt}&\leq&\hspace{-7pt} (1-\eta_t\mu)\mathbb{E}\left\{\|\overline{\mathbf{w}}_{t}-\mathbf{w}^*\|^2\right\}\nonumber\\
   \hspace{-7pt}&&\hspace{-12pt} +\eta_t^2\Big(D_2+\frac{d\gamma_{\max}^2\tilde{\pi}^2}{4M(2^B-1)^2}\Big).
 \end{eqnarray}

By \eqref{38_1} and \eqref{42_1}, one can conclude that regardless of whether $t+1\in\mathcal{I}_E$ or $t+1\notin\mathcal{I}_E$, it always holds that
\begin{eqnarray*}
  \mathbb{E}\left\{\|\overline{\mathbf{w}}_{t+1}-\mathbf{w}^*\|^2\right\}\hspace{-7pt} &\leq& \hspace{-7pt} (1-\eta_t\mu)\mathbb{E}\left\{\|\overline{\mathbf{w}}_{t}-\mathbf{w}^*\|^2\right\}\\
  \hspace{-7pt} && \hspace{-7pt}+\eta_t^2(D_2+D_3),
\end{eqnarray*}
where, for brevity, let $D_3=\frac{d\gamma_{\max}^2\tilde{\pi}^2}{4M(2^B-1)^2}$. By virtue of the mathematical induction method again and invoking similar arguments shown in Theorem 2 from \eqref{fixediteration} to \eqref{theorem3result}, we derive \par
\vspace{-10pt}
\begin{small}
\begin{eqnarray}\label{56}
 \mathbb{E}\left\{\|\overline{\mathbf{w}}_{t+1}-\mathbf{w}^*\|^2\right\} \hspace{-7pt} &\leq& \hspace{-7pt} \frac{\nu_2}{\vartheta+t+1}
\end{eqnarray}
\end{small}%
together with the result as shown in \eqref{theorem4jieguo} by $L$-smoothness, i.e.,
  \begin{eqnarray*}
   \hspace{-23pt}&&\hspace{-10pt} \mathbb{E}\left\{F(\overline{\mathbf{w}}_{t})\right\}-F^* \nonumber\\
    \hspace{-23pt}&&\hspace{-10pt} \leq \frac{8L}{\mu(\vartheta\hspace{-1pt}+\hspace{-1pt}t)}\hspace{-1pt}\left(\hspace{-1pt}\frac{2(D_2\hspace{-1pt}+\hspace{-1pt}D_3)}{\mu}\hspace{-1pt}+\hspace{-1pt}\frac{8L\hspace{-1pt}+\hspace{-1pt}\mu E}{2}\mathbb{E}\left\{\|\overline{\mathbf{w}}_0\hspace{-1pt}-\hspace{-1pt}\mathbf{w}^*\|^2\right\}\hspace{-2pt}\right),
  \end{eqnarray*}
where $\nu_2=\max\big\{\frac{\rho^2(D_2+D_3)}{\rho\mu-1},(\vartheta+1)\mathbb{E}\{\left\|\overline{\mathbf{w}}_{0}-\mathbf{w}^*\|^2\right\}\big\}$.
\end{proof}


\end{document}